\title[Convergence of a semi-Lagrangian scheme]{Convergence of a semi-Lagrangian scheme for the BGK model of the Boltzmann equation}
\author{Giovanni Russo}
\address{Dipartimento di Matematica e Informatica, Universita di Catania, Viale Andrea Doria 6, 95125,
Catania, Italia}
\email{russo@dmi.unict.it}
\author{Pietro santagati}
\address{Dipartimento di Matematica e Informatica, Universita di Catania, Viale Andrea Doria 6, 95125,
Catania, Italia}
\email{psantagati@dmi.unict.it}
\author{Seok-Bae Yun}
\address{Department of Mathematical Sciences, KAIST (Korea Advanced Institute of Science and Technology),
373-1 Guseong-dong, Yuseong-gu, Daejeon, 305-701, Korea}
\email{sbyun@kaist.ac.kr}
\begin{document}
\allowdisplaybreaks
\subjclass[2010]{35Q20, 76P05, 65M12, 65M25}
\keywords{Boltzmann equation, BGK model, Convergence and stability of numerical methods, Semi-Lagrangian methods}
\newtheorem{theorem}{Theorem}[section]
\newtheorem{lemma}{Lemma}[section]
\newtheorem{corollary}{Corollary}[section]
\newtheorem{proposition}{Proposition}[section]
\newtheorem{remark}{Remark}[section]
\newtheorem{definition}{Definition}[section]

\renewcommand{\theequation}{\thesection.\arabic{equation}}
\renewcommand{\thetheorem}{\thesection.\arabic{theorem}}
\renewcommand{\thelemma}{\thesection.\arabic{lemma}}
\newcommand{\bbr}{\mathbb R}
\newcommand{\bbt}{\mathbb T}
\newcommand{\bbs}{\mathbb S}
\newcommand{\bbz}{\mathbb Z}
\newcommand{\bn}{\bf n}


\def\charf {\mbox{{\text 1}\kern-.24em {\text l}}}
\begin{abstract}
Recently, a new class of semi-Lagrangian methods for the BGK model of the Boltzmann equation has been introduced \cite{F-R,R-P,Sant}.
These methods work in a satisfactory way either in rarefied or fluid regime. Moreover, because of the semi-Lagrangian feature, the stability property is not restricted by the CFL condition. These aspects make them very attractive for practical applications.
In this paper, we investigate the convergence properties of the method and prove that the
discrete solution of the scheme converges in a weighted $L^1$ norm to the unique smooth solution by deriving an
explicit error estimate.
\end{abstract}
\maketitle

\tableofcontents
%
%
%
%
\section{{\bf Introduction}}
In the kinetic theory of gases, the dynamics of a non-ionized monatomic rarefied gas system is
described by the celebrated Boltzmann equation.
But numerical approximation of the Boltzmann dynamics is a formidable challenge
due mainly to the complicated structure of the collision operator.
Many good numerical techniques have been developed to this end, but often they lead to time consuming computations.

To circumvent these difficulties, Bhatnagar, Gross and Krook \cite{B-G-K}, and independently Welander \cite{W}, proposed
a simplified model for the Boltzmann equation where the collision operator was replaced by a relaxation operator:
\begin{align}
\begin{aligned}\label{main.1}
\displaystyle\frac{\partial f}{\partial t}  + v \cdot \nabla_x f&= \frac{1}{\kappa}(\mathcal{M}(f)-f),
\quad  (x,v,t) \in \bbt^d \times \bbr^d \times \bbr_+, \cr
\displaystyle f(x,v,0) &= f_0(x,v).
\end{aligned}
\end{align}
Here $\bbt^d$ denotes the $d$-dimensional torus and $\displaystyle\frac{1}{k}$ is the collision frequency.
Although the collision frequency takes various forms depending on hypotheses imposed in the derivation
of the model \cite{A-S-Y,Pi-Pu,Stru,Yun1,Yun2,Z-Stru}, we assume in this paper that it is a fixed constant for simplicity.
${\mathcal M}$ denotes the local Maxwellian constructed from the velocity moments of the distribution function $f$:
\[
\displaystyle\mathcal{M}(f)(x,v,t)=\frac{{\rho(x,t)}}{\sqrt{(2\pi T(x,t)})^d}\exp\Big(-\frac{|v- U(x,t)|^2}{2 T(x,t)}\Big),
\]
where
\begin{eqnarray*}
\rho(x,t)&=& \int_{\bbr^d} f(x,v,t)dv,\cr
\rho(x,t)U(x,t)&=&\int_{\bbr^d}f(x,v,t)v dv,\cr
d\rho(x,t)T(x,t)&=& \int_{\bbr^d} f(x,v,t)| v-U(x,t)|^2dv.
\end{eqnarray*}
\indent The BGK model (\ref{main.1}) is computationally less expensive than the Boltzmann equation since it is sufficient to update the macroscopic fields in each time step.
On the other hand, it provides qualitatively correct solutions for the macroscopic moments in fluid regime.
These two aspects, namely, the relatively low computational cost and the correct description of hydrodynamic limit, explain the interest in the BGK model and its variations over the last decades.
It also shares important features with the original Boltzmann equation, such as
the conservation laws and the dissipation of entropy:
\begin{eqnarray}\label{ConservationLaw}
\int \mathcal{M}(f)
\left(\begin{array}{c}
1\\
v\\
|v|^2\\
\end{array}
\right)
=\int f
\left(\begin{array}{c}
1\\
v\\
|v|^2\\
\end{array}
\right) dv
\end{eqnarray}
and
\begin{eqnarray}\label{EntropyDissipation}
\int ({\mathcal M}(f)-f)\log f dv\leq 0.
\end{eqnarray}
The local conservation laws (\ref{ConservationLaw}) leads to a system of hydrodynamic type equations.
\begin{align}
\begin{aligned}
&\frac{d}{dt}\int f dv+\nabla_x\cdot\int v f dv=0,\cr
&\frac{d}{dt}\int f vdv+\nabla_x\cdot\int v\otimes v f dv=0,\cr
&\frac{d}{dt}\int f |v|^2dv+\nabla_x\cdot\int v|v|^2 f dv=0.
\end{aligned}
\end{align}
There are extensive literatures on various topics of the BGK model.
For mathematical analysis, we refer to \cite{Bel,Iss,Mischler,P,P-P,Yun1,Yun2}. For numerical computations,
since there are too many of them, we do not attempt to present a complete set of references.
See \cite{C,Pi-Pu,P-R,F-R,Sant} and references therein.

Recently, a semi-Lagrangian scheme was proposed and tested successfully for various flow problems arising
in gas dynamics \cite{F-R,R-P,Sant}.
The first order version of the method can be written down as follows:
\begin{equation}
f^{n+1}_{i,j,R}=\frac{\kappa}{\kappa+\triangle t}~\widetilde{f}^{n}_{i,j,R}+\frac{\triangle t}{\kappa+\triangle t}~
{\mathcal M}^n_{i,j}(\widetilde{f}^{n}_R),\label{MainScheme_Introduction}
\end{equation}
where $\mathcal{M}^n_{i,j}$ denotes the local Maxwellian defined by
\[
{\mathcal M}^n_{i,j}(\widetilde{f}^{n}_R)=\frac{ {\widetilde\rho}^n_{iR}}{\sqrt{(2\pi \widetilde {T}^n_{i,R}})^N}\exp\Big(-\frac{|v_j-\widetilde {U}^n_{i,R}|^2}{2\widetilde {T}^n_{i,R}}\Big).
\]
The precise definitions of each terms will be given in later sections.
The main feature of the scheme is that even though the relaxation operator is treated
implicitly, computations can be performed explicitly by exploiting the approximate conservation laws in a
very clever way. (See section 2). Therefore, (\ref{MainScheme_Introduction}) enjoys the stability
property of implicit schemes and the low computational cost of explicit schemes at the same time.
Moreover, the semi-Lagrangian treatment of the transport part enables one to
perform the computation over a wide range of CFL numbers. 
In this paper, we study the convergence issue of this scheme and derive an explicit
estimate of the convergence rate measured in a weighted $L^1$ space. As far as we know, this seems to be
the first result on the strong convergence of a fully discretized scheme for nonlinear collisional kinetic equations.

This paper, after introduction is organized as follows. In section 2, we describe the numerical method considered in this paper.
In section 3, we recall relevant existence results. In section 4, we present our main result.
Then several essential estimates to be used in later sections are presented in section 5. In section 6, we derive a
consistent form and obtain error estimates of the remainder terms.
Finally, in section 7, we combine these elements to prove the main theorem.

%
%
%
%
\section{{\bf Description of the numerical scheme}}
For simplicity, we consider one dimensional problem in space and velocity.
We assume constant time step $\triangle t$ with final time $T_f$ and uniform grid in space and velocity with mesh spacing $\triangle x$, $\triangle v$
respectively. We denote the grid points as follows:
\begin{align}
\begin{aligned}\label{gridnodes}
&t^n=n\triangle t,\hspace{1cm}n=1,...,N_t, \cr
&x_i=i\triangle x,\hspace{1.1cm}i=1,...,N_x,\quad (mod ~1),\cr
&v_j=j\triangle v,\hspace{0.95cm}j=-N_v,..,0,..,N_v,
\end{aligned}
\end{align}
where $N_t\triangle t = T_f$, $N_x\triangle x=1$ and $N_v\triangle v=R$.
We also denote the approximate solution of $f(x_i,v_j,t)$ by $f^n_{i,j,R}$.
To describe the numerical scheme more succinctly, we introduce the following
convenient notation.
First, we define $x(i,j)=x_i-\triangle t v_j$. We also set $s=s(i,j)$ to be the index of the spatial node such that
$x(i,j) \in [x_s, x_{s+1})$.
\begin{definition}
We define the reconstructed distribution function $\widetilde f^{n}_{i,j,R}$ as
\begin{equation}
\widetilde f^{n}_{i,j,R}=\frac{x(i,j)-x_s}{\triangle x}~f^{n}_{s+1,j,R}+\frac{x_{s+1}-x(i,j)}{\triangle x}~f^{n}_{s,j,R}.\label{shift}
\end{equation}
\end{definition}
Note that $\widetilde f^n_{i,j,R}$ is the linear reconstruction of $f(x-v\triangle t,v,n\triangle t)$.
\begin{figure}[htbp]
\centering
\includegraphics[angle=-90, scale=0.50]{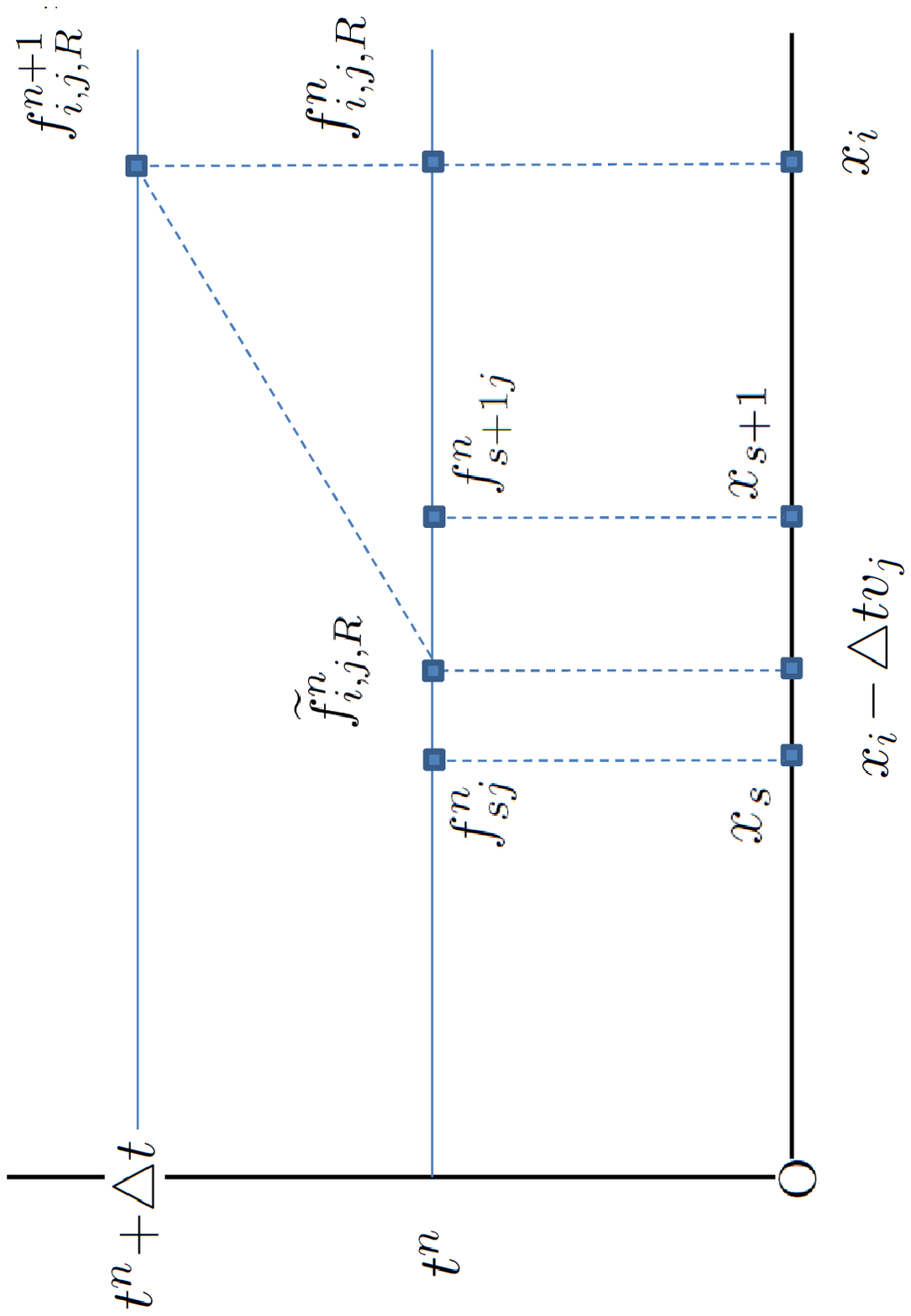}
\caption{Characteristics diagram for positive velocity grid node.}
\end{figure}
The numerical scheme we consider in this paper can now be stated as follows.
%
%
\begin{align}\label{MainScheme}
\begin{aligned}
f^{n+1}_{i,j,R}&=\frac{\kappa}{\kappa+\triangle t}~\widetilde{f}^{n}_{i,j,R}+\frac{\triangle t}{\kappa+\triangle t}~
{\mathcal M}^n_{i,j}(\widetilde{f}^{n}_R),\cr
f^0_{i,j,R}&=\big(f_0\mathcal{X}_{\{|v|\leq R\}}\big)(x_i,v_j),
\end{aligned}
\end{align}
where
\[
{\mathcal M}^n_{i,j}(\widetilde{f}^{n}_R)=\frac{ {\widetilde\rho}^n_{iR}}{\sqrt{(2\pi \widetilde {T}^n_{i,R}})^N}\exp\Big(-\frac{|v_j-\widetilde {U}^n_{i,R}|^2}{2\widetilde {T}^n_{i,R}}\Big),
\]
and
\begin{align}
\begin{aligned}
\widetilde\rho^n_{i,R} &= \sum_{j} \widetilde f^{n}_{i,j,R}\triangle v,\cr
\widetilde\rho^n_{i,R} \widetilde U^n_{i,R} &= \sum_{j} \widetilde f^{n}_{i,j,R} v_j\triangle v,\cr
d\widetilde \rho^n_{i,R} \widetilde T^{n}_{i,R} &= \sum_{j} \widetilde f^{n}_{i,j,R}| v_j-\widetilde {U}_{i,R}|^2\triangle v.
\end{aligned}\label{Conservatives_Original}
\end{align}
We now explain briefly how (\ref{MainScheme}) is derived.
%
%
%
%
The numerical scheme for (\ref{main.1}) is based on the following characteristic formulation of the problem:
\begin{align}
\begin{aligned}\label{characteristic.formulation1}
&\frac{df}{dt}=\frac{1}{\kappa}(\mathcal{M}(f)-f),\cr
&\frac{dx}{dt}=v.
\end{aligned}
\end{align}
Then the time evolution of $f_{i,j}(t)=f(x_i,v_j,t)$ along the characteristic line in the time step $[t_n,t_{n+1}]$ is
presented as
\begin{align}
\begin{aligned}\label{characteristic.formulation2}
&\frac{df_{i,j}}{dt}=\frac{1}{\kappa}(\mathcal{M}(f_{i,j})-f_{i,j}),\cr
&\frac{dx_i}{dt}=v_j.
\end{aligned}
\end{align}
We then discretize (\ref{characteristic.formulation2}) implicitly to obtain
\begin{eqnarray}\label{implicit.euler}
\frac{f^{n+1}_{i,j}-\widetilde{f}^n_{i,j}}{\triangle t}=\frac{1}{\kappa}(\mathcal{M}^{n+1}_{i,j}(f^{n+1}_{ij})-f^{n+1}_{i,j}).
\end{eqnarray}
At this step, (\ref{implicit.euler}) seems to be time consuming, and this is where the clever trick kicks in.
We first observe that conservation of mass, momentum and energy gives for $\phi(v_j)=1,v_j, \frac{1}{2}|v_j|^2$
\begin{eqnarray*}
\frac{\sum_{j}f^{n+1}_{i,j}\phi(v_j)-\sum_{j}\widetilde{f}^n_{i,j}\phi(v_j)}{\triangle t}
&=&\frac{1}{\kappa}\sum_{j}\big(\mathcal{M}^{n+1}_{i,j}(f^{n+1}_{ij})-f^{n+1}_{i,j}\big)\phi(v_j)\cr
&\approx&0,
\end{eqnarray*}
which implies
\[
\sum_{j}f^{n+1}_{i,j}\phi(v_j)\approx\sum_{j}\widetilde{f}^n_{i,j}\phi(v_j).
\]
This in turn gives
\begin{eqnarray}\label{M=tildeM}
\mathcal{M}^{n+1}_{i,j}({f^{n+1}_{i,j}})\approx\mathcal{M}^n_{i,j}({\widetilde{f}^n_{i,j}}).
\end{eqnarray}
We then substitute the above approximation (\ref{M=tildeM}) into (\ref{implicit.euler}) to obtain
\begin{eqnarray}
\frac{f^{n+1}_{i,j}-\widetilde{f}^n_{i,j}}{\triangle t}=\frac{1}{\kappa}(\mathcal{M}^{n}_{i,j}(\widetilde{f}^{n}_{ij})-f^{n+1}_{i,j}).
\end{eqnarray}
Note that the implicit scheme (\ref{implicit.euler}) now can be calculated explicitly.
We then collect relevant terms together to obtain (\ref{MainScheme}).
For more details about (\ref{MainScheme}), we refer to \cite{R-P,Sant}.
%
%
%
%
\subsection{{\bf Extension of the scheme}}
In this section, we extend the discrete distribution function $\{f^{n}_{i,j}\}_{i,j}$ to the whole numerical domain $\bbt_x\times \bbr_v$
and reformulate (\ref{MainScheme}) in accordance with the extension. This allows us to treat the discrete numerical solution and the exact solution
in the same framework.
First we introduce
\begin{eqnarray*}
C_1(x)&=&\sum_{i}x_i~\mathcal{X}_{(x_{i-1}\leq x<x_{i})},\\
C_2(v)&=&\sum_{i}v_i~\mathcal{X}_{(v_{i-\frac{1}{2}}\leq v <v_{i+\frac{1}{2}})}.
\end{eqnarray*}
Here $\mathcal{X}_A$ denotes the usual characteristic function and we used the following convenient notation:
\[
v_{i\pm\frac{1}{2}}=v_i\pm\frac{\triangle v}{2}=\big(i\pm\frac{1}{2}\big)\triangle v.
\]
We now define
\begin{eqnarray*}
{\mathcal A}_{x_i}(x,v)&=&\mathcal{X}_{A_{x_i}}(x,v),\cr
{\mathcal A}^R_{i,j}(x,v)&=&\mathcal{X}_{A_{x_i}}(x,v)\cdot \mathcal{X}_{A^R_{v_j}}(x,v),
\end{eqnarray*}
where
\begin{eqnarray*}
A_{x_i}&=&\{(x,v)|~ C_1(x)=x_i\},\\
A^R_{v_j}&=&\{(x,v)|~ C_2(v)=v_j~\}\cap\{(x,v)|~ |v|\leq R \}.
\end{eqnarray*}
For given sequences $\{a_i\}$ and $\{b_{ij}\}$ defined on grid nodes, we define the following extension operators:
\begin{eqnarray*}
&&E_x(a_{i})(x,v)\equiv\sum_{i,j}\Big(\frac{x-x_{i}}{\triangle x}a_{i+1}+\frac{x_{i+1}-x}{\triangle x}a_{i}\Big){\mathcal A}_{x_i}(x,v),\cr
&&E(b_{ij})(x,v)\equiv\sum_{i,j}\Big(\frac{x-x_{i}}{\triangle x}b_{i+1,j}+\frac{x_{i+1}-x}{\triangle x}b_{i,j}\Big){\mathcal A}^R_{i,j}(x,v).
\end{eqnarray*}
Now the approximate distribution function can be extended to the whole numerical domain as follows.
\begin{eqnarray}\label{ExtendedDistributionFtns}
f^{n}_R(x,v)&\equiv&E(f^n_{i,j,R})(x,v)\nonumber\\
&=&\sum_{i,j}\Big(\frac{x-x_{i}}{\triangle x}f^{n}_{i+1,j}
+\frac{x_{i+1}-x}{\triangle x}f^n_{i,j}\Big){{\mathcal A}^R_{i,j}}(x,v).\label{fnR_Extension}
\end{eqnarray}
Note that $f^n_R(x,v)$ is piecewise constant in the velocity domain and
piecewise linear in the spatial domain.
Using this, we define the macroscopic fields and the local Maxwellian as follows:
\begin{align}
\begin{aligned}
\rho^n_R(x,t)&= \int f^n_R(x,v,t)dv,\cr
\rho^n_R(x,t) U^n_R(x,t) &= \int f^n_R(x,v,t)C_2(v) dv,\cr
d\rho^n_R(x,t) T^n_R(x,t) &= \int f^n_R(x,v,t)|C_2(v)-U^n_R(x,t)|^2 dv
\end{aligned}\label{Conservatives_Reform}
\end{align}
and
\begin{equation}
\mathcal{M}^n(f^n_R)(x,v)\equiv \frac{\rho^n_R(x)}{\sqrt{(2\pi T^n_R(x)}}\exp\Big(-\frac{|C_2(v)-U^n_R(x)|^2}{2\pi T^n_R(x)}\Big).\label{reform_M}
\end{equation}
We also define the reconstructed distribution function $\widetilde{f}^n_R$ as
\begin{eqnarray*}
\widetilde{f}^n_R(x,v)&=&E_{x,v}(\widetilde f^n_{i,j,R})\cr
&=&\sum_{i,j}\Big(\frac{x-x_{i}}{\triangle x}\widetilde f^{n}_{i+1,j}
+\frac{x_{i+1}-x}{\triangle x}\widetilde f^n_{i,j}\Big){{\mathcal A}^R_{i,j}}(x,v),\label{reform_tilde2_PartIII_Chap1}
\end{eqnarray*}
Then the corresponding macroscopic fields and the local Maxwellian are defined analogously:
\begin{align}
\begin{aligned}\label{Conservatives_Reform_PartIII_Chap1}
\widetilde\rho^n_R(x,t)&= \int\widetilde f^n_R(x,v,t)dv,\cr
\widetilde\rho^n_R(x,t)\widetilde U^n_R(x,t) &= \int\widetilde f^n_R(x,v,t)C_2(v) dv,\cr
d\widetilde\rho^n_R(x,t)\widetilde T^n_R(x,t) &= \int\widetilde f^n_R(x,v,t)|C_2(v)-\widetilde U^n_R(x,t)|^2 dv
\end{aligned}
\end{align}
and
\begin{equation}
\mathcal{M}^n(\widetilde f^n_R)(x,v)\equiv \frac{\widetilde \rho^n_R(x)}{\sqrt{(2\pi \widetilde T^n_R(x)}}
\exp\Big(-\frac{|C_2(v)-\widetilde U^n_R(x)|^2}{2\pi \widetilde T^n_R(x)}\Big).\label{reform_M}
\end{equation}
\subsection{{\bf Consistency}}
The following series of lemmas show that the preceding definitions are reasonable.
%
%
%
%
\begin{lemma}\label{PeriodicityofFnR} $f^{n}_R$, $\widetilde f^{n}_R$ are periodic functions with period 1:
\begin{align}
\begin{aligned}
f^{n}_R(x+1,v)&=f^{n}_R(x,v),\\
\widetilde f^{n}_R(x+1,v)&=\widetilde f^{n}_R(x,v).
\end{aligned}
\end{align}
\end{lemma}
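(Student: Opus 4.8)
The plan is to reduce the periodicity of $f^n_R$ and $\widetilde f^n_R$ to the periodicity of the grid data together with the structural properties of the extension operator $E$ and the indicator functions ${\mathcal A}^R_{i,j}$. First I would argue by induction on $n$. For the base case $n=0$, the data is $f^0_{i,j,R}=(f_0\mathcal{X}_{\{|v|\le R\}})(x_i,v_j)$; since $f_0$ is defined on $\bbt^d\times\bbr^d$ it is already $1$-periodic in $x$, and the grid nodes $x_i=i\triangle x$ with $N_x\triangle x=1$ satisfy $x_{i+N_x}=x_i+1$, so the grid sequence $\{f^0_{i,j,R}\}_i$ is $N_x$-periodic in the index $i$. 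The key combinatorial fact is that shifting $x\mapsto x+1$ in the definition \eqref{fnR_Extension} is absorbed by relabeling $i\mapsto i+N_x$: one checks ${\mathcal A}_{x_i}(x+1,v)={\mathcal A}_{x_{i+N_x}}(x,v)$ (the spatial cell $A_{x_i}$ is translated by $1$ into $A_{x_{i+N_x}}$ via $C_1(x+1)=C_1(x)+1$), while the velocity factor $\mathcal{X}_{A^R_{v_j}}$ is untouched, and the linear interpolation weights $\frac{x-x_i}{\triangle x}$ transform correctly since $\frac{(x+1)-x_i}{\triangle x}=\frac{x-(x_i-1)}{\triangle x}=\frac{x-x_{i-N_x}}{\triangle x}$. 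Hence $f^0_R(x+1,v)=f^0_R(x,v)$.

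For the inductive step, suppose $\{f^n_{i,j,R}\}_i$ is $N_x$-periodic in $i$ for every $j$; equivalently $f^n_R(x+1,v)=f^n_R(x,v)$. I would first show the reconstructed data $\widetilde f^n_{i,j,R}$ inherits this periodicity. Recall $\widetilde f^n_{i,j,R}$ is built from $f^n_{s,j,R},f^n_{s+1,j,R}$ where $s=s(i,j)$ is the index with $x(i,j)=x_i-\triangle t v_j\in[x_s,x_{s+1})$. Replacing $i$ by $i+N_x$ gives $x(i+N_x,j)=x(i,j)+1$, hence $s(i+N_x,j)=s(i,j)+N_x$, and the interpolation weights in \eqref{shift} are unchanged; combining with the $N_x$-periodicity of $\{f^n_{\cdot,j,R}\}$ yields $\widetilde f^n_{i+N_x,j,R}=\widetilde f^n_{i,j,R}$. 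Then the same argument used for the base case — applied now to the extension formula for $\widetilde f^n_R$ in \eqref{reform_tilde2_PartIII_Chap1} — gives $\widetilde f^n_R(x+1,v)=\widetilde f^n_R(x,v)$.

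Finally I would propagate periodicity to step $n+1$ via the scheme \eqref{MainScheme}. From the just-established periodicity of $\widetilde f^n_{i,j,R}$ in $i$, the discrete moments \eqref{Conservatives_Original} satisfy $\widetilde\rho^n_{i+N_x,R}=\widetilde\rho^n_{i,R}$, $\widetilde U^n_{i+N_x,R}=\widetilde U^n_{i,R}$, $\widetilde T^n_{i+N_x,R}=\widetilde T^n_{i,R}$ (these are just finite sums over $j$ of the periodic quantities), so ${\mathcal M}^n_{i+N_x,j}(\widetilde f^n_R)={\mathcal M}^n_{i,j}(\widetilde f^n_R)$. Since $f^{n+1}_{i,j,R}$ is an affine combination of $\widetilde f^n_{i,j,R}$ and this Maxwellian with coefficients $\frac{\kappa}{\kappa+\triangle t}$, $\frac{\triangle t}{\kappa+\triangle t}$ independent of $i$, we get $f^{n+1}_{i+N_x,j,R}=f^{n+1}_{i,j,R}$, and one more application of the extension-operator identity yields $f^{n+1}_R(x+1,v)=f^{n+1}_R(x,v)$. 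I do not anticipate a genuine obstacle here; the only point requiring care is the bookkeeping of how the translation $x\mapsto x+1$ interacts with the index $s(i,j)$ in the semi-Lagrangian shift and with the cell indicators ${\mathcal A}_{x_i}$, so I would state the identity ${\mathcal A}_{x_i}(x+1,v)={\mathcal A}_{x_{i+N_x}}(x,v)$ explicitly as the load-bearing lemma and verify it from $C_1(x+1)=C_1(x)+1$.
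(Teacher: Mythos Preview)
Your proposal is correct and is essentially a detailed elaboration of the paper's argument: the paper's own proof is the single sentence ``This follows directly from the periodicity of the spatial domain,'' and your induction on $n$ together with the index-shift identities ${\mathcal A}_{x_i}(x+1,v)={\mathcal A}_{x_{i+N_x}}(x,v)$ and $s(i+N_x,j)=s(i,j)+N_x$ is exactly the bookkeeping that makes that sentence rigorous. There is no difference in approach, only in the level of detail.
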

\begin{proof}
This follows directly from the periodicity of the spatial domain.
\end{proof}
%
%
%
%
\begin{lemma}\label{ConsistLemma1}For each $x_i$ and $v_j$, the following consistency properties hold.
\begin{align}
\begin{aligned}
f^{n}_R(x_i,v_j)&= f^n_{i,j,R},\\
\widetilde f^n_R(x_i,v_j)&= \widetilde f^n_{i,j,R}.
\end{aligned}\label{reform_tilde_PartIII_Chap1}
\end{align}
\end{lemma}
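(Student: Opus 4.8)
The plan is to evaluate the extended functions $f^n_R$ and $\widetilde f^n_R$ directly at the grid nodes $(x_i, v_j)$ and check that the defining sums in \eqref{fnR_Extension} collapse to the single term $f^n_{i,j,R}$ (resp. $\widetilde f^n_{i,j,R}$). The argument is essentially bookkeeping with the indicator functions ${\mathcal A}^R_{k,l}$, so I would organize it around two elementary observations about where a grid node lands relative to the cells $A_{x_k}$ and $A^R_{v_l}$.

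\emph{Step 1 (velocity indicators).} Since $C_2(v_j) = v_j$ exactly — because $v_j = j\triangle v$ lies in the half-open interval $[v_{j-1/2}, v_{j+1/2}) = [(j-\tfrac12)\triangle v, (j+\tfrac12)\triangle v)$ — we have $\mathcal{X}_{A^R_{v_l}}(x_i, v_j) = \delta_{jl}$ for all $|v_j| \le R$ (the truncation is harmless at interior velocity nodes, which is the only case needed since the scheme is defined for $|j| \le N_v$). Hence in the double sum only the terms with $l = j$ survive.

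\emph{Step 2 (spatial indicators and the linear weights).} For the spatial part one must be slightly careful about the half-open convention in $A_{x_k} = \{C_1(x) = x_k\}$. Recall $C_1(x) = \sum_k x_k\,\mathcal{X}_{(x_{k-1}\le x < x_k)}$, so $C_1(x_i) = x_i$, which means $\mathcal{X}_{A_{x_k}}(x_i,v) = \delta_{ik}$. But the sum \eqref{fnR_Extension} is written with weights $\tfrac{x - x_k}{\triangle x}$ and $\tfrac{x_{k+1}-x}{\triangle x}$ attached to ${\mathcal A}^R_{k,j}$ whose spatial support is $[x_{k-1}, x_k)$ — so I need to track the index shift between the "cell label" $x_k$ and the interpolation stencil $\{x_k, x_{k+1}\}$. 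Evaluating at $x = x_i$: the only nonzero spatial indicator is ${\mathcal A}_{x_i}$, and on it the linear combination reduces to $\tfrac{x_i - x_i}{\triangle x} f^n_{i+1,j} + \tfrac{x_{i+1}-x_i}{\triangle x} f^n_{i,j} = f^n_{i,j,R}$, the off-node contribution vanishing and the on-node weight being $1$. (If the convention instead forces the surviving term to be the one with weight on $x_{i-1}$, the same cancellation gives $f^n_{i,j,R}$ with the roles of the two weights swapped; either way the node value is recovered.) Combining Steps 1 and 2 yields $f^n_R(x_i,v_j) = f^n_{i,j,R}$, and the identical computation with $\widetilde f^n_{i,j,R}$ in place of $f^n_{i,j,R}$ gives the second identity.

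The main (and really the only) obstacle is purely notational: making sure the half-open intervals in $C_1, C_2$ and in the sets $A_{x_k}, A^R_{v_l}$ are used consistently, so that a grid node is assigned to exactly one cell and the telescoping linear weights evaluate to $0$ and $1$ at the endpoints. There is no analytic content — once the indicator supports are pinned down, the interpolation weights at the nodes do the rest — so I would present this as a short, direct verification rather than invoking any machinery.
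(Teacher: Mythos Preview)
Your proposal is correct and follows essentially the same approach as the paper: plug the grid node $(x_i,v_j)$ into the defining sum, use the indicator ${\mathcal A}^R_{m,\ell}$ to isolate the single surviving term, and observe that the linear weights collapse to $0$ and $1$. The paper's own proof is just the three-line computation you describe in Step~2 (with the second identity dispatched by ``similar argument''), so your more careful discussion of the half-open conventions is extra bookkeeping but not a different method.
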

\begin{proof}
We recall the definition \eqref{fnR_Extension} of $f^{n}_R(x_i,v_j)$ to see
\begin{eqnarray*}
f^{n}_R(x_i,v_j)&=&\sum_{m,\ell}\Big(\frac{x_i-x_{m}}{\triangle x}f^{n}_{m+1,\ell}+\frac{x_{m+1}-x_i}{\triangle x}f^{n}_{m,\ell}\Big)
{\mathcal A}^R_{m,\ell}(x_i,v_j)\\
&=&\frac{x_i-x_{i}}{\triangle x}f^{n}_{i+1,j}+\frac{x_{i+1}-x_i}{\triangle x}f^{n}_{i,j}\\
&=&f^{n}_{i,j}
\end{eqnarray*}
The second statement follows in a similar way.
\end{proof}
%
%
%
\begin{lemma}\label{ConsistLemma2} For macroscopic fields, we have
\begin{align}
\begin{aligned}
\rho^n_R(x_i,t)&= \rho^n_{i,R},\\
\rho^n_R(x_i,t) U^n_R(x_i,t) &= \rho^n_{i,R} U^n_{i,R},\\
\rho^n_R(x_i,t) T^n_R(x_i,t) &= \rho^n_{i,R} T^n_{i,R}
\end{aligned}
\end{align}
and
\begin{align}
\begin{aligned}
\widetilde\rho^n_R(x_i,t)&= \widetilde\rho^n_{i,R},\cr
\widetilde\rho^n_R(x_i,t)\widetilde U^n_R(x_i,t) &= \widetilde\rho^n_{i,R}\widetilde U^n_{i,R},\cr
\widetilde\rho^n_R(x_i,t)\widetilde T^n_R(x_i,t) &= \widetilde\rho^n_{i,R}\widetilde T^n_{i,R}.
\end{aligned}
\end{align}
\end{lemma}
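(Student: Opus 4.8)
The plan is to establish these identities by a direct computation that, at the grid point $x=x_i$, collapses each velocity integral in \eqref{Conservatives_Reform} (and in \eqref{Conservatives_Reform_PartIII_Chap1}) into the corresponding discrete sum of \eqref{Conservatives_Original}. Two ingredients do all the work: Lemma \ref{ConsistLemma1}, which gives $f^n_R(x_i,v_j)=f^n_{i,j,R}$ and $\widetilde f^n_R(x_i,v_j)=\widetilde f^n_{i,j,R}$; and the structure of the extension operator in \eqref{fnR_Extension}, namely that for fixed $x$ both $f^n_R(x,\cdot)$ and $\widetilde f^n_R(x,\cdot)$ are piecewise constant in $v$, supported in $\{|v|\le R\}$, and on the $j$-th velocity cell $A^R_{v_j}$ they equal their nodal value while at the same time $C_2(v)\equiv v_j$ there.

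First I would treat the density. Inserting the piecewise-constant-in-$v$ structure of $f^n_R(x_i,\cdot)$ into $\rho^n_R(x_i,t)=\int f^n_R(x_i,v,t)\,dv$ turns the integral into the cell sum $\sum_j f^n_R(x_i,v_j)\,\triangle v$, which by Lemma \ref{ConsistLemma1} equals $\sum_j f^n_{i,j,R}\triangle v=\rho^n_{i,R}$. For the momentum I would additionally use that $C_2(v)\equiv v_j$ on the $j$-th velocity cell, so that $\rho^n_R(x_i,t)U^n_R(x_i,t)=\int f^n_R(x_i,v)\,C_2(v)\,dv=\sum_j f^n_{i,j,R}v_j\,\triangle v=\rho^n_{i,R}U^n_{i,R}$. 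For the temperature the order matters: from the first two identities one first gets $U^n_R(x_i,t)=U^n_{i,R}$ (since $\rho^n_R(x_i,t)=\rho^n_{i,R}$), and then, again using $C_2(v)\equiv v_j$ on the $j$-th cell, $|C_2(v)-U^n_R(x_i,t)|^2=|v_j-U^n_{i,R}|^2$ there, whence $d\rho^n_R(x_i,t)T^n_R(x_i,t)=\sum_j f^n_{i,j,R}|v_j-U^n_{i,R}|^2\triangle v=d\rho^n_{i,R}T^n_{i,R}$. The three tilde identities are obtained by repeating this computation verbatim with $\widetilde f^n_R$ and $\widetilde f^n_{i,j,R}$ in place of $f^n_R$ and $f^n_{i,j,R}$, invoking the second half of Lemma \ref{ConsistLemma1}.

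The computation is routine; the one point that must be handled with a little care is the bookkeeping at the extreme velocity nodes $j=\pm N_v$, where $A^R_{v_j}$ is the intersection of a half-open cell of width $\triangle v$ with $\{|v|\le R\}$, so one has to check that the contribution of these two end cells to the integrals $\int(\cdot)\,dv$ is weighted exactly as the $\triangle v$-terms appearing in the discrete sums \eqref{Conservatives_Original}; this is precisely where the conventions fixed in the definitions of $C_2$ and ${\mathcal A}^R_{i,j}$ enter. In the write-up I would carry out the density case in full detail and then simply note that the momentum, temperature, and tilde versions follow by the same argument.
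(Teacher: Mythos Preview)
Your proposal is correct and follows essentially the same approach as the paper: use the piecewise-constant-in-$v$ structure of the extension so that integrals collapse to $\triangle v$-weighted sums, invoke the nodal consistency of Lemma~\ref{ConsistLemma1} at $x=x_i$, and use $C_2(v)\equiv v_j$ on each velocity cell. The paper only writes out the temperature identity and tacitly uses $U^n_R(x_i,t)=U^n_{i,R}$ in the last step; your plan to establish density and momentum first, and hence this equality, before treating the temperature is in fact a cleaner ordering, and your flag about the end cells $j=\pm N_v$ is a legitimate bookkeeping point that the paper's proof passes over in silence.
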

\begin{proof}
We prove the third identity. We observe
\begin{eqnarray*}
&&d\rho^{n}_R(x_i)T^n_R(x_i,t)\cr
&&\hspace{1cm}\equiv\int f^n_R(x_i,v)|C_2(|v|)-U^n_R(x_i,t)|^2dv\cr
&&\hspace{1cm}=\int\sum_{m,\ell}\Big(\frac{x_i-x_{m}}{\triangle x}f^{n}_{m+1,\ell}
+\frac{x_{m+1}-x_i}{\triangle x}f^n_{m,\ell}\Big)|C_2(|v|)-U^n_R(x_i,t)|^2{{\mathcal A}^R_{m,\ell}}(x_i,v)dv\cr
&&\hspace{1cm}=\int\sum_{\ell}\Big(\frac{x_i-x_{i}}{\triangle x}f^{n}_{i+1,\ell}
+\frac{x_{i+1}-x_i}{\triangle x}f^n_{i,\ell}\Big)|C_2(|v|)-U^n_R(x_i,t)|^2{{\mathcal A}^R_{m,\ell}}(x_i,v)dv\cr
&&\hspace{1cm}=\int\sum_{\ell}f^n_{i,\ell}|C_2(|v|)-U^n_R(x_i,t)|^2~{{\mathcal A}^R_{i,\ell}}(x_i,v)dv\cr
&&\hspace{1cm}=\sum_{\ell}f^n_{i,\ell}|v_j-U^n_i|^2\triangle v\cr
&&\hspace{1cm}=d\rho^n_{i,j,R}T^n_{i,j,R}.
\end{eqnarray*}
Other identities can be proved in a similar manner.
\end{proof}
%
%
%
%
\begin{lemma}\label{ConsistLemma3M} The following consistency properties hold for local Maxwellians.
\begin{align}
\begin{aligned}
\mathcal{M}^n(f^n_R)(x_i,v_j)&=\mathcal{M}^n_{i,j}(f^n_{i,j,R}),\\
\mathcal{M}^n(\widetilde f^n_R)(x_i,v_j)&=\mathcal{M}^n_{i,j}(\widetilde f^n_{i,j,R}).
\end{aligned}
\end{align}
\end{lemma}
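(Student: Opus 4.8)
\emph{Proof proposal.} The statement is a direct bookkeeping consequence of the previous two lemmas, so the plan is simply to evaluate the reformulated Maxwellian \eqref{reform_M} at an arbitrary grid point $(x_i,v_j)$ and match it term by term with the discrete Maxwellian ${\mathcal M}^n_{i,j}(f^n_{i,j,R})$. The first observation I would make is that $C_2$ restricts to the identity on velocity grid nodes: since $v_j\in[v_{j-\frac12},v_{j+\frac12})$, the definition of $C_2$ gives $C_2(v_j)=v_j$. This removes the only place where the reformulated Maxwellian differs in appearance from the discrete one, namely the argument $C_2(v)$ inside the Gaussian.

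Next I would invoke Lemma \ref{ConsistLemma2}, which already supplies $\rho^n_R(x_i)=\rho^n_{i,R}$ together with the products $\rho^n_R(x_i)U^n_R(x_i)=\rho^n_{i,R}U^n_{i,R}$ and $\rho^n_R(x_i)T^n_R(x_i)=\rho^n_{i,R}T^n_{i,R}$. Dividing the last two by the density yields $U^n_R(x_i)=U^n_{i,R}$ and $T^n_R(x_i)=T^n_{i,R}$. Substituting these three identities and $C_2(v_j)=v_j$ into \eqref{reform_M} reproduces verbatim the formula defining ${\mathcal M}^n_{i,j}(f^n_{i,j,R})$ (recall that the computation is one-dimensional, so the normalizing exponents $N$ and $d$ are both $1$), which is the first asserted identity. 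The second identity follows by the identical argument, using the tilde part of Lemma \ref{ConsistLemma2} and the tilde moment definitions \eqref{Conservatives_Reform_PartIII_Chap1} in place of their untilded counterparts; nothing new is needed.

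The only point calling for a word of care --- and the closest thing to an obstacle --- is the division by the density in passing from $\rho U$, $\rho T$ to $U$, $T$ themselves, which requires $\rho^n_{i,R}>0$. This positivity is inherited from the nonnegativity of the linear reconstruction \eqref{shift} and of the truncated initial datum $f_0\mathcal{X}_{\{|v|\leq R\}}$ through the update \eqref{MainScheme}; if a quantitative lower bound on the discrete density has not yet been recorded at this stage, I would either cite it from the existence results of Section 3 or, equivalently, state the conclusion at the level of the products $\rho^n_R U^n_R$ and $\rho^n_R T^n_R$, which is all that is used downstream.
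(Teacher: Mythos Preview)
Your proposal is correct and matches the paper's approach exactly: the paper's proof consists of the single sentence ``This follows directly from the definition \eqref{reform_M} and Lemma \ref{ConsistLemma2},'' which is precisely the bookkeeping you carry out (with the added observation $C_2(v_j)=v_j$). Your remark about the positivity of the discrete density needed to extract $U$ and $T$ from $\rho U$ and $\rho T$ is a valid caveat that the paper glosses over; the quantitative lower bound is indeed established later in Lemma \ref{ULBoundsMacroscopicDiscreteLemma}.
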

\begin{proof}
This follows directly from the definition \eqref{reform_M} and Lemma \ref{ConsistLemma2}.
\end{proof}
%
%
%
%
Our main scheme (\ref{MainScheme}) can now be restated as follows
\begin{eqnarray}\label{a.f.reform}
f^{n+1}_{i,j,R}=\frac{\kappa}{\kappa+\triangle t}\widetilde{f}^n_R(x_i, v_i)+\frac{\triangle t}{\kappa+\triangle t}\mathcal{M}^n(\widetilde{f^n_R})(x_i,v_i).
\end{eqnarray}
Applying the extension operator to \eqref{a.f.reform} once more, we obtain the following reformulation of (\ref{MainScheme}):
\begin{theorem}The discrete scheme (\ref{MainScheme}) can be recast in the following form:
\begin{align}
\begin{aligned}\label{refom_scheme}
f^{n+1}_R(x,v)&=\frac{\kappa}{\kappa+\triangle t}\widetilde{f}^n_R(x,v)
+\frac{\triangle t}{\kappa+\triangle t}E(\mathcal{M}^n(\widetilde{f^n_R}))(x,v),\cr
f^0_R(x,v)&=E\big(f_0{\mathcal X}_{\{|v|\leq R\}}\big)(x,v).
\end{aligned}
\end{align}
where we used a slightly abbreviated notation for brevity:
\begin{eqnarray*}
E(\mathcal{M}^n(\widetilde{f^n_R}))(x,v)\equiv E(\mathcal{M}^n(\widetilde{f^n_R})(x_i,v_j))(x,v).
\end{eqnarray*}
\end{theorem}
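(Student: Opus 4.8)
The plan is to obtain \eqref{refom_scheme} by simply applying the extension operator $E$ to the grid-level scheme and using its linearity together with the consistency lemmas already proved; the statement is in fact a pure bookkeeping identity, so the work lies in writing it cleanly rather than in overcoming a genuine difficulty. I would begin from the reformulated grid scheme \eqref{a.f.reform}, which expresses $f^{n+1}_{i,j,R}$ as the convex combination $\frac{\kappa}{\kappa+\triangle t}\widetilde f^n_R(x_i,v_j)+\frac{\triangle t}{\kappa+\triangle t}\mathcal{M}^n(\widetilde f^n_R)(x_i,v_j)$ --- this is nothing but the original scheme \eqref{MainScheme} rewritten by means of Lemma \ref{ConsistLemma1} (giving $\widetilde f^n_R(x_i,v_j)=\widetilde f^n_{i,j,R}$) and Lemma \ref{ConsistLemma3M} (giving $\mathcal{M}^n(\widetilde f^n_R)(x_i,v_j)=\mathcal{M}^n_{i,j}(\widetilde f^n_{i,j,R})$). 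Recalling the definition $f^{n+1}_R\equiv E(f^{n+1}_{i,j,R})$ and applying $E$ to both sides of \eqref{a.f.reform}, I would get
\[
f^{n+1}_R(x,v)=E\!\left(\frac{\kappa}{\kappa+\triangle t}\,\widetilde f^n_R(x_i,v_j)+\frac{\triangle t}{\kappa+\triangle t}\,\mathcal{M}^n(\widetilde f^n_R)(x_i,v_j)\right)(x,v).
\]

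Next I would invoke the linearity of $E$ with respect to the entries of a grid sequence, which is immediate from its defining formula: $E(b_{ij})$ is a finite sum whose coefficients $\frac{x-x_i}{\triangle x}$ and $\frac{x_{i+1}-x}{\triangle x}$ are linear in the $b_{ij}$ and independent of them. This splits the right-hand side into
\[
\frac{\kappa}{\kappa+\triangle t}\,E\big(\widetilde f^n_R(x_i,v_j)\big)(x,v)+\frac{\triangle t}{\kappa+\triangle t}\,E\big(\mathcal{M}^n(\widetilde f^n_R)(x_i,v_j)\big)(x,v).
\]
The second summand is precisely the abbreviated notation $E(\mathcal{M}^n(\widetilde f^n_R))(x,v)$ defined in the statement, so nothing further is needed there. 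For the first summand, Lemma \ref{ConsistLemma1} gives $\widetilde f^n_R(x_i,v_j)=\widetilde f^n_{i,j,R}$, and since $\widetilde f^n_R$ was defined precisely as $E(\widetilde f^n_{i,j,R})$, I would conclude $E(\widetilde f^n_R(x_i,v_j))(x,v)=\widetilde f^n_R(x,v)$; equivalently, $E$ reproduces any function already lying in its range (piecewise linear in $x$, piecewise constant in $v$, supported in $\{|v|\leq R\}$), a class that contains both $f^n_R$ and $\widetilde f^n_R$.

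The initial datum is handled the same way: from $f^0_{i,j,R}=(f_0\mathcal{X}_{\{|v|\leq R\}})(x_i,v_j)$ and $f^0_R\equiv E(f^0_{i,j,R})$ one reads off, in the abbreviated notation, $f^0_R=E(f_0\mathcal{X}_{\{|v|\leq R\}})$, which is the second line of \eqref{refom_scheme}; combining this with the first display yields \eqref{refom_scheme}. I do not expect any real obstacle here. The only step that deserves an explicit line is the justification of the linearity of $E$ on grid sequences and of the reproducing identity $E(\widetilde f^n_R(x_i,v_j))=\widetilde f^n_R$, both of which follow by direct inspection of the definition of $E$ and of the piecewise-polynomial structure shared by $f^n_R$, $\widetilde f^n_R$ and every output of $E$.
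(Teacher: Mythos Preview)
Your proposal is correct and follows exactly the paper's own approach: the paper simply states ``Applying the extension operator to \eqref{a.f.reform} once more, we obtain the following reformulation,'' and your argument is just a careful unpacking of that sentence using linearity of $E$ and the consistency lemmas. There is no substantive difference.
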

\subsection{{\bf Notation}}
Before we proceed to the next section, we set some notational conventions.
\begin{itemize}
\item $C$ denotes generic constants.
\item $C_{x,y,..}$ denotes generic constants that depend on $x$, $y$,... but not exclusively.
\item We use the following convention for the $L^1$ norm with polynomial weight and the $L^{\infty}$ norm.
\begin{eqnarray*}
\|f(t)\|_{L^1_q}&=&\int_{\bbr^d\times\bbr^d}f(x,v,t)(1+|v|)^qdxdv,\\
\|f(t)\|_{L^{\infty}}&=&\sup_{x,v}|f(x,v,t)|,\\
\|f(t)\|_{L^{\infty}}&=&\sup_{x}|f(x,t)|.
\end{eqnarray*}
\item We introduce the following notation for weighted $L^{\infty}$-Sobolev norms for smooth or approximate solutions.
\begin{eqnarray*}
\displaystyle N^{0}_q(f)(t)&=&\sup_{x,v}|(1+|v|)^qf(x,v)|,\cr
\displaystyle N^{k}_q(f)(t)&=&\sum_{|\alpha|=k}\sup_{x,v}|(1+|v|)^q\partial^{\alpha}_x
f(x,v)|,\cr
\displaystyle \overline{N}^{k}_q(f)(t)&=&\sum_{|\alpha|+|\beta|=k}\sup_{x,v}|(1+|v|)^q\partial^{\alpha}_x\partial^{\beta}_v
f(x,v)|,
\end{eqnarray*}
and
\begin{eqnarray*}
\displaystyle N^0_q(f^n_{R})(t)&=&\sup_{m,\ell}|(1+|v_{\ell}|)^qf^n_R(x_{m},v_{\ell})\Big|,\cr
\displaystyle N^{1}_q(f^n_{R})(t)&=&\sup_{m,\ell}\Big|(1+|v_{\ell}|)^q\frac{f^n_R(x_{m+1},v_{\ell})-f^n_R(x_{m},v_{\ell})}{\triangle x}\Big|.
\end{eqnarray*}
For simplicity, we set
\begin{eqnarray*}
\displaystyle N_q(f)(t)&=&N^0_{q}(f)(t)+N^{1}_{q}(f)(t),\cr
\displaystyle \overline{N}_q(f)(t)&=&N^0_{q}(f)(t)+\overline{N}^{1}_{q}(f)(t).
\end{eqnarray*}
\end{itemize}
\begin{remark}
Note that we have deliberately distinguished  $\overline{N}_q$ from $N_q$. This simplifies many computations
in later sections.
\end{remark}
%
%
\section{{\bf Existence and uniqueness of smooth solutions}}
%
%
%
In this section, we recall relevant existence results of \eqref{main.1}.
The existence and uniqueness was first obtained in \cite{P-P} and the regularity was investigated in \cite{Iss}.
The following theorem is a slight simplification of the corresponding results in \cite{Iss, P-P}, which is enough for our purpose.
For the proof, we refer to \cite{Iss, P-P}.
\begin{theorem}\label{Existence_Theorem}\emph{\cite{Iss, P-P}} Suppose $f_0\geq 0$, $f_0\in L^1(\mathbb{T}\times \bbr)$.
Suppose further that there exists constants $C_1$ and $C_2$ such that
\begin{align}
\begin{aligned}\label{LowerBoundforLocalDensity}
&\displaystyle\int_{\bbr^n}f_0(x-vt,v)dv\geq C_1>0,\cr
&\hspace{2cm} \overline{N}_{q}(f_0)\leq C_2.
\end{aligned}
\end{align}
for $p>5$.
Then there exists a positive number $T_f$ and a unique solution of the BGK model (\ref{main.1}) such that\newline
\indent(1) $\overline{N}_q$-norm of $f$ is uniformly bounded in $[0,T_f]$:
\begin{eqnarray*}
 \overline{N}_{q}(f)(t)\leq C_{T_f} \quad\mbox{for}~ t\in [0,T_f],
\end{eqnarray*}
\indent(2) Macroscopic fields satisfy the following estimates:
\begin{align}
\begin{aligned}\label{ULBoundsMacroscopicContinuous}
&\|\rho(t)\|_{L^{\infty}_x}+\|U(t)\|_{L^{\infty}_x}+\|T(t)\|_{L^{\infty}_x}\leq C_q N^0_q(f^0_R)e^{C_qT_f},\cr
&\hspace{2.4cm}\rho(x,t)\geq C_qe^{-C_qT_f},\cr
&\hspace{2.2cm}T(t)\geq C_{q}e^{-C_qT_f}>0.
\end{aligned}
\end{align}
\end{theorem}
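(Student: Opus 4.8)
The plan is to prove Theorem~\ref{Existence_Theorem} by a Picard iteration on the mild (Duhamel) formulation of \eqref{main.1} along characteristics, together with $\overline{N}_q$ a priori estimates that confine the iterates to a fixed bounded set on a short time interval $[0,T_f]$ (this is essentially the route of \cite{Iss,P-P}; I only outline it). Integrating \eqref{main.1} along the characteristic $s\mapsto(x-v(t-s),v)$ gives the fixed-point identity
\begin{equation*}
f(x,v,t)=e^{-t/\kappa}f_0(x-vt,v)+\frac1\kappa\int_0^t e^{-(t-s)/\kappa}\,\mathcal{M}(f)\big(x-v(t-s),v,s\big)\,ds .
\end{equation*}
Set $f^{(0)}(x,v,t)=e^{-t/\kappa}f_0(x-vt,v)$ and $f^{(m+1)}=\Phi(f^{(m)})$, where $\Phi$ is the right-hand side above with the Maxwellian computed from the moments of $f^{(m)}$. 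A Maxwellian is nonnegative once its density and temperature are positive, so by induction $f^{(m)}\ge0$ and in fact $f^{(m+1)}(x,v,t)\ge e^{-t/\kappa}f_0(x-vt,v)$; integrating in $v$ and using \eqref{LowerBoundforLocalDensity} gives the density lower bound $\rho^{(m+1)}(x,t)\ge C_1 e^{-t/\kappa}$ at once, and a parallel argument for $d\rho T=\int f|v-U|^2dv$ (which is the minimum over $a$ of $\int f|v-a|^2dv$) propagates a strictly positive lower bound for the temperature from the initial data. I expect maintaining this positivity of $T$ uniformly in $m$ and $t$ to be the more delicate of the two, since $T$ and $\rho$ will appear in denominators below.

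The core estimate is a Maxwellian bound of the form
\begin{equation*}
\overline{N}_q\big(\mathcal{M}(g)\big)\le C\Big(\|\rho_g\|_{L^\infty},\ \|U_g\|_{L^\infty},\ \|T_g\|_{L^\infty},\ (\inf_x T_g)^{-1},\ (\inf_x\rho_g)^{-1}\Big)\big(1+N_q(g)\big),
\end{equation*}
valid because Gaussian decay beats any polynomial weight, $x$-derivatives of $\mathcal{M}(g)$ are polynomials in $\partial_x(\rho_g,U_g,T_g)$ times $\mathcal{M}(g)$, and $\partial_x(\rho_g,U_g,T_g)$ are velocity moments of $\partial_x g$ that are controlled by $N^1_q(g)$ once $q$ is large enough — this is the role of $q>5$ (enough moments converge and can be estimated after dividing by $\rho_g$). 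Substituting into $\Phi$ and using $e^{-(t-s)/\kappa}\le1$ yields $\overline{N}_q(\Phi(g))(t)\le e^{-t/\kappa}\overline{N}_q(f_0)+C\,t\,\sup_{[0,t]}(1+N_q(g))$, with the macroscopic fields of $g$ bounded along the way by $N^0_q(g)$ and bounded below by the lower-bound argument above. Hence, for $T_f$ small depending only on $C_1,C_2,\kappa$, the set $\{\,\overline{N}_q(g)(t)\le 2C_2,\ \rho_g\ge\tfrac{C_1}{2},\ T_g\ge\tfrac12\inf_x T_0\ \text{on}\ [0,T_f]\,\}$ is invariant under $\Phi$. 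The $\partial_v$-derivatives ride along in the same estimate: $\partial_v$ hitting $f_0(x-vt,v)$ or $\mathcal{M}(g)(x-v(t-s),v,s)$ produces a factor $O(t)$ times an $x$-derivative plus a pure $v$-derivative, so the mixed derivatives must be propagated jointly — which is precisely why one must track $\overline{N}_q$ and not merely $N_q$, as the remark after the notation foreshadows. Closing this nonlinear estimate simultaneously with the uniform positivity of $\rho$ and $T$ is the main obstacle; everything else is routine.

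Finally, on the invariant set $g\mapsto\mathcal{M}(g)$ is Lipschitz, since $(\rho_g,U_g,T_g)$ depend Lipschitz-continuously on $g$ in $L^\infty$ and $\mathcal{M}$ is smooth in $(\rho,U,T)$ away from $\rho=0$, $T=0$; thus $\Phi$ is a contraction in the weighted norm $\sup_{x,v,t}(1+|v|)^q|f_1-f_2|$ for $T_f$ possibly shrunk further, giving a unique fixed point $f$ on $[0,T_f]$. The $\overline{N}_q$ bound passes to the limit by lower semicontinuity, and replacing the crude inequality $e^{-t/\kappa}\le1$ by a Gr\"onwall estimate in $t$ upgrades the a priori bounds to the stated $\overline{N}_q(f)(t)\le C_{T_f}$ and \eqref{ULBoundsMacroscopicContinuous}, the exponential factors $e^{C_qT_f}$ reflecting the linear-in-$\overline{N}_q$ feedback of the relaxation term. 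Uniqueness in the larger class follows by applying the same Lipschitz bounds and Gr\"onwall's inequality to the difference of two solutions.
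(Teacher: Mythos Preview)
The paper does not actually prove this theorem: it explicitly states that Theorem~\ref{Existence_Theorem} is a slight simplification of results in \cite{Iss,P-P} and refers the reader there for the proof. So there is no ``paper's own proof'' to compare your proposal against.

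That said, your outline is faithful to the strategy of \cite{P-P} and \cite{Iss}: Duhamel formulation along characteristics, Picard iteration, propagation of the weighted $L^\infty$ bound $\overline{N}_q$ via the Maxwellian estimate $\overline{N}_q(\mathcal{M}(g))\le C\,\overline{N}_q(g)$ (Lemma~\ref{control_macroscopic values_continuous} and the analogue of Lemma~\ref{ControlofMaxwellian} are precisely the ingredients of this step), lower bounds on $\rho$ and $T$ coming from $f\ge e^{-t/\kappa}f_0(x-vt,v)$ and hypothesis~\eqref{LowerBoundforLocalDensity}, and finally contraction/uniqueness via the Lipschitz property of $\mathcal{M}$ (Lemma~\ref{ContinuityofMaxwellian}). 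Your identification of the temperature lower bound as the delicate point is also correct; in \cite{P-P} this is handled not by a direct argument on $\int f|v-U|^2dv$ but via the first inequality in \eqref{PS1}, which gives $T\ge c(\rho/N_0(f))^{2/d}$ once $\rho$ is bounded below and $N_0(f)$ above. You may want to replace your ``parallel argument'' for $T$ by this cleaner route, since a direct lower bound on $\int f|v-a|^2dv$ from the Duhamel representation is not straightforward.
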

In what follows, we list some of the important estimates satisfied by the smooth solutions.
Readers are referred to \cite{P-P} for the proof.
\begin{lemma}\label{control_macroscopic values_continuous}\emph{\cite{P-P}} Under the assumptions of the previous theorem,
the following estimates hold.
\begin{align}
\begin{aligned}\label{PS1}
\frac{\rho}{(T)^{\frac{d}{2}}}&\leq C_q N_0(f),\cr
\rho(T+|U|^2)^{\frac{q-d}{2}}&\leq C_q N_q(f)\quad(q>d+2),\cr
\frac{\rho}{( T^n_R+|U^n_R|^2)^{\frac{d-q}{2}}}&\leq C_qN_q(f)\quad(q<d),\cr
\frac{\rho| U|^{d+q}}{[(T+|U|^2)T]^{\frac{d}{2}}}&\leq C_qN_q(f)\quad(q>1).
\end{aligned}
\end{align}
\end{lemma}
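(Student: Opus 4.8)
\emph{Proof strategy.} All four inequalities are pointwise in $(x,t)$, so the plan is to fix $(x,t)$, write $\rho,U,T$ for the macroscopic fields there, and set $m_0=N^0_0(f)$ and $m_q=N^0_q(f)$, so that $f(x,v)\leq\min\{m_0,\,m_q(1+|v|)^{-q}\}$ and $m_0\leq m_q\leq N_q(f)$. I would base everything on two elementary devices. The first is Chebyshev's inequality against the temperature: since $\int f|v-U|^2\,dv=d\rho T$, one has $\int_{|v-U|>r}f\,dv\leq d\rho T/r^2$ for every $r>0$, so a fixed fraction of the mass $\rho$ sits in $\{|v-U|\leq r\}$ once $r\gtrsim\sqrt T$. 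The second is the moment--interpolation bound: for $0<\alpha<q-d$, splitting $\int f|v|^{\alpha}\,dv\leq\lambda^{\alpha}\rho+m_q\int_{|v|>\lambda}|v|^{\alpha-q}\,dv=\lambda^{\alpha}\rho+C_{q,\alpha,d}\,m_q\lambda^{\alpha+d-q}$ and optimizing over $\lambda$ (the optimal choice satisfying $\lambda^{q-d}\sim m_q/\rho$) gives
\begin{equation*}
\int f|v|^{\alpha}\,dv\leq C_{q,\alpha,d}\,\rho^{\,1-\alpha/(q-d)}\,m_q^{\,\alpha/(q-d)}.
\end{equation*}

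For the first estimate I would split $\rho=\int_{|v-U|\leq r}f\,dv+\int_{|v-U|>r}f\,dv\leq C_d\,m_0\,r^{d}+d\rho T/r^{2}$, take $r=\sqrt{2dT}$, and absorb the resulting $\rho/2$ to get $\rho\leq C_d\,m_0\,T^{d/2}$; since $m_0\leq N_0(f)$ this is the assertion. For the second estimate I would use the identity $\int f|v|^2\,dv=d\rho T+\rho|U|^2$, which for $d\geq1$ yields $\rho(T+|U|^2)\leq\int f|v|^2\,dv\leq d\,\rho(T+|U|^2)$. Since $q>d+2$ the interpolation bound applies with $\alpha=2$; raising $\int f|v|^2\,dv\leq C\,\rho^{1-2/(q-d)}m_q^{2/(q-d)}$ to the power $(q-d)/2$ (which is $\geq 1$) and inserting the lower bound $\int f|v|^2\,dv\geq\rho(T+|U|^2)$ gives $\rho^{(q-d)/2}(T+|U|^2)^{(q-d)/2}\leq C\,\rho^{(q-d)/2-1}m_q$, and dividing by $\rho^{(q-d)/2-1}$ leaves $\rho(T+|U|^2)^{(q-d)/2}\leq C_{q,d}\,m_q$.

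For the third and fourth estimates $q$ is small, the weight $(1+|v|)^{-q}$ need not be integrable, and the interpolation device is unavailable; here I would instead use the location of the bulk. Chebyshev with $r=2\sqrt{dT}$ puts at least $\tfrac34\rho$ of the mass in $\{|v-U|\leq 2\sqrt{dT}\}\subset\{|v|\leq C_dR\}$, where $R:=\sqrt{T+|U|^2}$. For the third estimate ($q<d$) I would bound that mass by $m_q\int_{|v|\leq C_dR}(1+|v|)^{-q}\,dv\leq C_{q,d}\,m_qR^{d-q}$, using that $\int_{|v|\leq\kappa}(1+|v|)^{-q}\,dv\leq C_{q,d}\,\kappa^{d-q}$ for every $\kappa>0$ when $q<d$; this gives $\rho\leq C_{q,d}\,m_q(T+|U|^2)^{(d-q)/2}$, which is the assertion. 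For the fourth estimate, since $(T+|U|^2)^{d/2}\geq|U|^d$ it suffices to prove $\rho|U|^q\leq C_{q,d}\,m_qT^{d/2}$. When $|U|\geq 4\sqrt{dT}$ one has $|v|\geq|U|/2$ on the bulk, so $\tfrac34\rho\leq m_q(1+|U|/2)^{-q}\,C_d(dT)^{d/2}$, which already yields $\rho(1+|U|)^q\leq C_{q,d}\,m_qT^{d/2}$; when $|U|<4\sqrt{dT}$ the quantity $T+|U|^2$ is comparable to $T$ and $|U|^q\leq C_{q,d}\,T^{q/2}$, so the assertion reduces to $\rho T^{(q-d)/2}\leq C_{q,d}\,m_q$, which follows from the second estimate if $q>d+2$, from the third if $q<d$, and from the first together with the a priori bound $T\leq C$ of Theorem~\ref{Existence_Theorem} if $d\leq q\leq d+2$.

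The hard part will be the second estimate: the exponent $(q-d)/2$ is exactly critical, since the velocity moment $\int f|v|^{q-d}\,dv$ diverges against the weight $(1+|v|)^{-q}$ and therefore no single moment controls $\rho(T+|U|^2)^{(q-d)/2}$ directly --- one is forced to bound only the second moment and then raise it to the power $(q-d)/2$, which is just barely enough. The remaining work --- verifying that the optimizing $\lambda$ is admissible, and sorting out the case distinctions in the fourth estimate --- is routine bookkeeping.
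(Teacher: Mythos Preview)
Your approach is correct and, for the first two estimates, essentially identical to the paper's (which defers to \cite{P-P}; the same argument is carried out for the discrete analogue in Lemma~\ref{control_macroscopic values}). There the paper writes $\rho\le d\rho T/D^{2}+CD^{d}N_0(f)$ and optimizes $D$, and for the second estimate uses $\rho(dT+|U|^2)=\int f|v|^{2}\,dv\le C_qN_q(f)D^{-(q-d-2)}+\rho D^{2}$ with $D=(N_q(f)/\rho)^{1/(q-d)}$; your interpolation bound is precisely this optimization repackaged.

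The genuine differences are in the third and fourth estimates. For the third, the paper splits directly on $|v|$, obtaining $\rho\le C_qD^{d-q}N_q(f)+\rho(dT+|U|^2)/D^{2}$ and optimizing; you instead Chebyshev-localize the mass near $U$ and use the inclusion $\{|v-U|\le r\}\subset\{|v|\le C_dR\}$. Both are valid; yours is a bit more geometric, the paper's more mechanical. For the fourth, the paper bounds $\rho|U|\le\int f|v|\,dv$, splits on $|v-U|\lessgtr D$, applies H\"older (exponent $q$) on the near part and Cauchy--Schwarz on the far part to get
\[
\rho|U|\le C\rho^{1-1/q}N_q(f)^{1/q}D^{d/q}+\frac{\rho}{D}\big[T(T+|U|^2)\big]^{1/2},
\]
and optimizes $D$; this is self-contained for all $q>1$. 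Your reduction to $\rho|U|^{q}\le C m_qT^{d/2}$ plus the $|U|\gtrless\sqrt{T}$ dichotomy also works, but note that in the intermediate range $d\le q\le d+2$ you invoke the a~priori bound $T\le C$ from Theorem~\ref{Existence_Theorem}. That is admissible here, since the lemma is stated under the theorem's hypotheses, but it makes the inequality extrinsic to $f$, whereas the paper's H\"older argument yields a bound depending only on $N_q(f)$. If you want the estimate to stand alone (as in \cite{P-P} and Lemma~\ref{control_macroscopic values}), the H\"older--Cauchy--Schwarz route is the cleaner choice.
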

\section{{\bf Main result}}
We are now in a place to state our main result.
%
%
%
%
\begin{theorem}\label{maintheorem}
Let $f$ be a smooth solution corresponding to a nonnegative initial datum $f_0$ satisfying the hypotheses of
theorem \ref{Existence_Theorem}. Let $f^n_R$ be the approximate solution
constructed iteratively by \eqref{refom_scheme}. Suppose  
that the time step is bounded in the sense that:
\begin{equation}\label{no.small1}
\triangle t<\max\{\frac{1}{2},~\kappa\}
\end{equation}
and the size of spatial and velocity meshes satisfies the following smallness assumption:
\begin{equation}\label{smallness1}
\triangle x+\triangle v<\frac{\int f_{0R}(x-vT_f,v)dv}{2\overline{N}_q(f_0)(2+T_f)},
\end{equation}
where
$f_{0R}=f_0\mathcal{X}_{|v|<R}.$
Then we have
\begin{eqnarray*}
\|f(\cdot,\cdot ,T_f) - f^{N_t}_R\|_{L^1_2}\leq C(T,q,f_0)
\Big(\triangle x+\triangle v+\frac{\triangle v+\triangle x}{\triangle t}+\triangle t+\frac{1}{(1+R)^{q+1}}~\Big).\\
\end{eqnarray*}
\end{theorem}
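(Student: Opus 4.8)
The plan is to proceed by a Lax-type argument: build a stable, consistent discretization and control the accumulated error via a discrete Gronwall inequality, but with the crucial twist that everything must be tracked in the weighted $L^1_2$ norm while simultaneously maintaining pointwise lower bounds on the discrete macroscopic density and temperature (which is what keeps the discrete Maxwellian from degenerating). First I would record the a priori estimates for the approximate solution. Using the reformulation \eqref{refom_scheme}, since the scheme is a convex combination of $\widetilde f^n_R$ and the (nonnegative) discrete Maxwellian, nonnegativity of $f^n_R$ is preserved, and the weighted norms $N_q(f^n_R)$ and $N^1_q(f^n_R)$ propagate with at most exponential growth in $n\triangle t$ --- here one needs Lemma \ref{control_macroscopic values_continuous} (or its discrete analogue) to bound $N_q(E(\mathcal M^n(\widetilde f^n_R)))$ by $C_q N_q(f^n_R)$. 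The smallness assumption \eqref{smallness1} is precisely what guarantees, via the estimate $\int f^n_R(x-vt,v)\,dv \geq \int f_{0R}(x-vT_f,v)\,dv - (\triangle x+\triangle v)(\cdots)$, that the discrete local density stays bounded below uniformly on $[0,T_f]$; from this and the weighted bounds one extracts lower bounds on $\widetilde\rho^n_R$ and $\widetilde T^n_R$ analogous to those in \eqref{ULBoundsMacroscopicContinuous}. These lower bounds are what make the discrete Maxwellian Lipschitz in its arguments with controlled constants.

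Next I would derive the consistency error. Writing the exact solution along characteristics, $f(x,v,t^{n+1}) = \frac{\kappa}{\kappa+\triangle t} f(x-v\triangle t, v, t^n) + \frac{\triangle t}{\kappa+\triangle t}\mathcal M(f)(x,v,t^{n+1}) + \tau^n$, one identifies the local truncation error $\tau^n$ as the sum of: (i) the time-discretization error of implicit Euler applied to the relaxation ODE, which is $O(\triangle t^2)$ in the appropriate weighted norm using $\overline N_q(f)(t)\le C_{T_f}$; (ii) the error in replacing $\mathcal M^{n+1}$ by $\mathcal M^n(\widetilde f^n_R)$ as in \eqref{M=tildeM}, controlled using the conservation laws and Lipschitz bounds on $\mathcal M$; (iii) the spatial reconstruction error $\|\widetilde f^n_R - f(\cdot - v\triangle t, v, t^n)\|$, which is $O(\triangle x^2 / \triangle x) = O(\triangle x)$ from piecewise-linear interpolation of a $C^2$-in-$x$ function — but note the characteristic foot $x-v\triangle t$ and the grid interpolation together produce the term $\frac{\triangle x+\triangle v}{\triangle t}$ after dividing by $\triangle t$ when the error is measured per step and summed over $N_t = T_f/\triangle t$ steps; and (iv) the velocity truncation $\mathcal X_{|v|\le R}$ together with the piecewise-constant-in-$v$ quadrature, giving the $\triangle v$ and $(1+R)^{-(q+1)}$ contributions (the latter from the tail $\int_{|v|>R} f(1+|v|)^2\,dv \le (1+R)^{-(q+1)} N_q(f)$, which needs $q+1 \le q-1$... actually $q>5$ gives room to spare for the weight $2$).

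Then comes the core estimate. Set $e^n = f(\cdot,\cdot,t^n) - f^n_R$ and subtract the scheme \eqref{refom_scheme} from the consistency identity. One gets
\begin{align*}
\|e^{n+1}\|_{L^1_2} \le \frac{\kappa}{\kappa+\triangle t}\|\widetilde e^n\|_{L^1_2} + \frac{\triangle t}{\kappa+\triangle t}\big\|E(\mathcal M^n(\widetilde f^n_R)) - \mathcal M(f)(\cdot,\cdot,t^n)\big\|_{L^1_2} + \|\tau^n\|_{L^1_2}.
\end{align*}
The reconstruction is $L^1$-stable, $\|\widetilde e^n\|_{L^1_2} \le \|e^n\|_{L^1_2} + C(\triangle x+\triangle v)$. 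The Maxwellian difference is the delicate term: I would split it as $\|\mathcal M(f)(\cdot,t^n) - \mathcal M(\widetilde f^n_R)\| + \|\mathcal M(\widetilde f^n_R) - E(\mathcal M^n(\widetilde f^n_R))\|$, bound the first by the Lipschitz continuity of $\mathcal M$ in $(\rho, U, T)$ — using the uniform lower bounds on density and temperature established above — by $C\|e^n\|_{L^1_2} + C(\triangle x+\triangle v)$, and the second (the extension/interpolation error of a smooth Maxwellian) by $C(\triangle x+\triangle v)$. This yields $\|e^{n+1}\|_{L^1_2} \le (1 + C\triangle t)\|e^n\|_{L^1_2} + C(\triangle x + \triangle v) + C\triangle t(\triangle x+\triangle v+\triangle t + (1+R)^{-(q+1)}) + C\frac{(\triangle x+\triangle v)\triangle t}{\triangle t}$-type bookkeeping; iterating the discrete Gronwall inequality over $N_t$ steps and using $\|e^0\|_{L^1_2} \le C(1+R)^{-(q+1)}$ gives the stated rate, with the $\frac{\triangle x+\triangle v}{\triangle t}$ term arising exactly because the $O(\triangle x+\triangle v)$ reconstruction error per step is not multiplied by $\triangle t$ and hence picks up a factor $N_t = T_f/\triangle t$ upon summation.

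The main obstacle, and the step I would spend the most care on, is establishing and propagating the uniform-in-$n$ lower bounds on the discrete density $\widetilde\rho^n_R$ and temperature $\widetilde T^n_R$: without them the Lipschitz constant of the discrete Maxwellian blows up and the Gronwall argument collapses. This is a nonlinear feedback — the lower bound on $\widetilde\rho^n_R$ requires control of $N_q(f^n_R)$, which in turn requires the Maxwellian term to be well-behaved, which requires the lower bound — so it must be closed by a simultaneous induction on $n$, carried along with the smallness condition \eqref{smallness1} ensuring the density does not drop below half its characteristic-transported initial value over the whole interval $[0,T_f]$. A secondary technical point is handling the interaction between the characteristic foot $x-v\triangle t$ (which can wander many cells for large $|v|$, but $|v|\le R$ is bounded) and the weighted norm, so that the velocity-weight $(1+|v|)^2$ does not spoil the interpolation constants; this is where the gap between $q>5$ and the weight exponent $2$ is used.
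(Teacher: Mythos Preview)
Your proposal is correct and matches the paper's approach closely: the same consistency/stability/Gronwall structure, the same decomposition of the Maxwellian error, and the same identification of the $(\triangle x+\triangle v)/\triangle t$ term as the per-step reconstruction error accumulated over $N_t=T_f/\triangle t$ steps.

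One point where the paper is cleaner than you anticipate: the ``nonlinear feedback'' you flag as the main obstacle does not actually require a simultaneous induction. The paper decouples the two ingredients. First, the bound $N_q(\mathcal{M}^n(f^n_R))\le C_q N_q(f^n_R)$ (Lemma~\ref{ControlofMaxwellian}) is proved using only the algebraic Perthame--Pulvirenti inequalities of Lemma~\ref{control_macroscopic values}, which need \emph{no} lower bound on temperature or density --- they follow from the $N_q$ norm alone. This gives $N_q(f^n_R)\le e^{C_qT_f}N_q(f^0_R)$ outright. Second, the lower bound on $\rho^n_R$ (Lemma~\ref{ULBoundsMacroscopicDiscreteLemma}) is obtained by simply \emph{dropping} the nonnegative Maxwellian term in the scheme and iterating the pure transport part down to $f^0_R$; the smallness assumption \eqref{smallness1} then compares the discrete transported integral to $\int f_{0R}(x-vT_f,v)\,dv$. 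The temperature lower bound then follows from $\rho^n_R/(T^n_R)^{d/2}\le C_qN_0(f^n_R)$. So the circularity you worry about is broken, and the argument is a straightforward sequence rather than a coupled bootstrap. Everything else in your outline --- including the initial-error bound $\|f_0-f^0_R\|_{L^1_2}\le C(\triangle x+\triangle v)+C(1+R)^{-(q+1)}$ (you omitted the first piece) --- lines up with the paper.
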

\begin{remark}
1. Note that (\ref{no.small1}) is not a smallness condition. Therefore, Theorem \ref{maintheorem} shows that our scheme works well even for large time steps.\newline
2. The condition (\ref{no.small1}), (\ref{smallness1}) on the mesh size is introduced to derive
 the lower bound estimates on discrete macroscopic fields. See lemma \ref{ULBoundsMacroscopicDiscreteLemma}.  \newline
3. For convergence, we need to set the size of $\triangle x$ and $\triangle v$ to be
 comparable with $\triangle t$. For example, if we set $\triangle x=\triangle v=(\triangle t)^{1+m}$, then we have
\begin{eqnarray*}
\|f(\cdot,\cdot ,T_f) - f^{N_t}_R\|_{L^1_2}\leq C(T,q,f_0)
\Big((\triangle t)^m+\triangle t+\frac{1}{(1+R)^{q+1}}~\Big).
\end{eqnarray*}
4. For high-order methods \cite{R-P, Sant}, we expect to obtain a high-order error estimate of the following form:
\begin{eqnarray*}
\|f(\cdot,\cdot ,T_f) - f^{N_t}_R\|_{L^1_2}\leq C
\Big((\triangle x)^{\ell}+(\triangle v)^m+\frac{(\triangle v)^{\ell}+(\triangle x)^m}{\triangle t}+(\triangle t)^n+\frac{1}{(1+R)^{q+1}}~\Big),
\end{eqnarray*}
which we leave  for the future research.
\end{remark}
\section{{\bf Basic estimates}}
In this section, we present several estimates which will be crucial in later sections. Unless it is necessary,
we do not restrict our argument to one dimensional problems 
and present the result in general $d$-dimension.
\begin{lemma}\label{Nqf_Nqtildef} The reconstruction procedure does not increase the $N_q$-norm of the discrete distribution function:
\[
N_q( \widetilde  f^n_R)\leq N_q(f^n_R).
\]
\end{lemma}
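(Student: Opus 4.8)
The plan is to exploit the fact that the reconstruction \eqref{shift} is, at each fixed velocity node $v_j$, a \emph{convex combination} of two neighbouring spatial values of $f^n_R$, together with a translation-invariance property of the interpolation weights. First I would reduce everything to grid values: by Lemma \ref{ConsistLemma1}, the quantities $N^0_q$ and $N^1_q$ of both $f^n_R$ and $\widetilde f^n_R$ are determined by the nodal values $f^n_{m,\ell,R}$ and $\widetilde f^n_{m,\ell,R}$. Writing $x(i,j)=x_i-\triangle t\, v_j$, letting $s=s(i,j)$ be as in the definition, and setting $\theta_{i,j}=\frac{x(i,j)-x_s}{\triangle x}\in[0,1)$, formula \eqref{shift} becomes $\widetilde f^n_{i,j,R}=\theta_{i,j}\, f^n_{s+1,j,R}+(1-\theta_{i,j})\, f^n_{s,j,R}$, a convex combination at the fixed velocity index $j$.

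For the $N^0_q$ part I would multiply by $(1+|v_j|)^q$, apply the triangle inequality, and use $\theta_{i,j},\,1-\theta_{i,j}\in[0,1]$ with $\theta_{i,j}+(1-\theta_{i,j})=1$: each of the two resulting terms is bounded by $N^0_q(f^n_R)$, hence so is the convex combination, and taking the supremum over $i,j$ gives $N^0_q(\widetilde f^n_R)\le N^0_q(f^n_R)$. The key point for the $N^1_q$ part — and essentially the only thing that has to be checked — is that $x(i+1,j)=x(i,j)+\triangle x$, so $s(i+1,j)=s(i,j)+1$ while the fractional weight is unchanged, $\theta_{i+1,j}=\theta_{i,j}$ (this holds also with the periodic identification mod $1$). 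Consequently
\[
\widetilde f^n_{i+1,j,R}-\widetilde f^n_{i,j,R}
=\theta_{i,j}\bigl(f^n_{s+2,j,R}-f^n_{s+1,j,R}\bigr)+(1-\theta_{i,j})\bigl(f^n_{s+1,j,R}-f^n_{s,j,R}\bigr),
\]
i.e. the spatial finite difference of $\widetilde f^n_R$ is the \emph{same} convex combination of spatial finite differences of $f^n_R$. Dividing by $\triangle x$, multiplying by $(1+|v_j|)^q$, and applying the triangle inequality exactly as before yields $N^1_q(\widetilde f^n_R)\le N^1_q(f^n_R)$. Adding the two estimates gives $N_q(\widetilde f^n_R)=N^0_q(\widetilde f^n_R)+N^1_q(\widetilde f^n_R)\le N^0_q(f^n_R)+N^1_q(f^n_R)=N_q(f^n_R)$.

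I do not expect a genuine obstacle here; the only care needed is bookkeeping at the velocity cutoff $|v|\le R$ and the periodic wrap-around in $x$. When $|v_j|>R$ both sides vanish, and when $|v_j|\le R$ all the spatial neighbours appearing above carry the same admissible velocity index $j$, so the convex-combination bounds go through verbatim, the mod-$1$ identification being harmless since all quantities are defined consistently mod $1$. In short, the lemma is the statement that linear interpolation is a contraction in these sup-type norms and commutes appropriately with the spatial difference quotient.
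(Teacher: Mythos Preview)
Your proposal is correct and follows essentially the same approach as the paper: both parts rest on the observation that $\widetilde f^n_{i,j,R}$ is a convex combination of neighbouring spatial values at fixed $j$, and that the interpolation weight $\theta_{i,j}$ is invariant under $i\mapsto i+1$ (with $s(i+1,j)=s(i,j)+1$), so the finite difference is again a convex combination. The paper's proof is written out with the same two-step decomposition and the same translation-invariance identity for the weight.
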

\begin{proof}
(i) The estimate of $N^0_q(\widetilde f^n_R)$: We observe from the definition of $\widetilde f^n_R$
\begin{eqnarray*}
N^0_q(\widetilde f^n_R)\
&=&\sup_{i,j}\big|\widetilde f^n_R(x_i,v_j)(1+|v_j|)^q\big|\cr
&=&\sup_{i,j}\Big|\Big(\frac{x(i,j)-x_{s(i,j),j}}{\triangle x}~f^{n}_{s(i,j)+1,j,R}
+\frac{x_{s(i,j)+1,j}-x(i,j)}{\triangle x}~f^{n}_{s(i,j),j,R}\Big)(1+|v_j|)^q\Big|~\cr
&\leq&\sup_{i,j}\big|\max\{f^{n}_{s(i,j),j,R},f^{n}_{s(i,j)+1,j,R}\}(1+|v_j|)^q\big|\cr
&\leq&\sup_{i,j}\big|f^n_{i,j,R}(1+|v_j|)^q\big|\cr
&=&N_q(f^n_R).
\end{eqnarray*}
(ii) The estimate of $N^1_q(\widetilde f^n_R)$: We first define
\begin{eqnarray*}
a=\frac{x(i+1,j)-x_{s(i+1,j),j}}{\triangle x}=\frac{x(i,j)-x_{s(i,j),j}}{\triangle x},
\end{eqnarray*}
which gives
\begin{eqnarray*}
1-a=\frac{x_{s(i+1,j)+1,j}-x(i+1,j)}{\triangle x}=\frac{x_{s(i,j)+1,j}-x(i,j)}{\triangle x}.
\end{eqnarray*}
Therefore, we have from the definition of $\widetilde{f}^n_R$
\begin{eqnarray*}
&&\Big|\frac{\widetilde{f}^n_R(x_{i+1},v_{j})-\widetilde{f}^n_R(x_i,v_{j})}{\triangle x}\Big|\cr
&&\qquad=\Big|\Big[\frac{af_{s(i+1,j)+1,j R}+(1-a)f_{s(s+1,j),jR}}{\triangle x}\Big]
-\Big[\frac{af_{s(i,j)+1,j R}+(1-a)f_{s(s,j),jR}}{\triangle x}\Big]\Big|\cr
&&\qquad=\Big|a\Big[\frac{f_{s(i+1,j)+1,j R}-f_{s(i,j)+1,j R}}{\triangle x}\Big]
+(1-a)\Big[\frac{f_{s(s+1,j),jR}-f_{s(s,j),jR}}{\triangle x}\Big]\Big|\cr
&&\qquad\leq a\Big|\frac{f_{s(i+1,j)+1,j R}-f_{s(i,j)+1,j R}}{\triangle x}\Big|+
(1-a)\Big|\frac{f_{s(s+1,j),jR}-f_{s(s,j),jR}}{\triangle x}\Big|\cr
&&\qquad \leq \frac{aN_q^1(f^n_R)}{(1+|v_j|)^q}+\frac{(1-a)N_q^1(f^n_R)}{(1+|v_j|)^q}\cr
&&\qquad \leq \frac{N_q^1(f^n_R)}{(1+|v_j|)^q}.
\end{eqnarray*}
Hence we have
\begin{eqnarray*}
N^1_q(\widetilde{f}^n_R)&=&\sup_{i,j}\Big|(1+|v_q|)^q\frac{\widetilde{f}^n_R(x_{i+1},v_{j})-\widetilde{f}^n_R(x_i,v_{j})}{\triangle x}\Big|\cr
&\leq& N_q^1(f^n_R).
\end{eqnarray*}
We then combine the above two estimates to obtain
\begin{eqnarray*}
N_q(\widetilde{f}^n_R)&=&N_q(\widetilde{f}^n_R)+N^1_q(\widetilde{f}^n_R)\cr
&\leq& N_q(f^n_R)+N^1_q(f^n_R)\cr
&=&N_q(f^n_R).
\end{eqnarray*}
\end{proof}
%
%
\begin{lemma}\label{control_macroscopic values}Suppose $N_q(f^n_R)<\infty$ with $q>d+2$. Then the following estimates hold.
\begin{align}
\begin{aligned}\label{PS1}
\frac{\rho^n_R}{(T^n_R)^{\frac{d}{2}}}&\leq C_q N_0(f^n_R),\cr
\rho^n_R(T^n_R+|U^n_R|^2)^{\frac{q-d}{2}}&\leq C_q N_q(f^n_R)\quad(q>d+2),\cr
\frac{\rho^n_R}{( T^n_R+|U^n_R|^2)^{\frac{d-q}{2}}}&\leq C_qN_q(f^n_R)\quad(q<d),\cr
\frac{\rho^n_R| U^n_R|^{d+q}}{[(T^n_R+|U^n_R|^2)T^n_R]^{\frac{d}{2}}}&\leq C_qN_q(f^n_R)\quad(q>1),
\end{aligned}
\end{align}
and
\begin{align}
\begin{aligned}\label{PS2}
\frac{\widetilde\rho^n_R}{(\widetilde T^n_R)^{\frac{d}{2}}}&\leq C_q N_0(f^n_R),\\
\widetilde\rho^n_R(\widetilde T^n_R+|\widetilde U^n_R|^2)^{\frac{q-d}{2}}&\leq C_q N_q(f^n_R)\quad(q>d+2),\\
\frac{\widetilde\rho^n_R}{(\widetilde T^n_R+|\widetilde U^n_R|^2)^{\frac{d-q}{2}}}&\leq C_qN_q(f^n_R)\quad(q<d),\\
\frac{\widetilde\rho^n_R|\widetilde U^n_R|^{d+q}}{[(\widetilde T^n_R+|\widetilde U^n_R|^2)\widetilde T^n_R]^{\frac{d}{2}}}&\leq C_qN_q(f^n_R)\quad(q>1).
\end{aligned}
\end{align}
\end{lemma}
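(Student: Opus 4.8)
The plan is to adapt, essentially line by line, the proof of Lemma~\ref{control_macroscopic values_continuous} (see \cite{P-P}), the present statement being its discrete counterpart for the fields \eqref{Conservatives_Reform} and \eqref{Conservatives_Reform_PartIII_Chap1}. Two reductions come first. Since $\widetilde\rho^n_R,\widetilde U^n_R,\widetilde T^n_R$ are built from $\widetilde f^n_R$ in exactly the same way $\rho^n_R,U^n_R,T^n_R$ are built from $f^n_R$, the block \eqref{PS2} follows from the block \eqref{PS1} applied to $\widetilde f^n_R$ together with the monotonicity $N_q(\widetilde f^n_R)\le N_q(f^n_R)$ of Lemma~\ref{Nqf_Nqtildef}; so it suffices to prove \eqref{PS1}. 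Then, freezing $x$, one needs only the two pointwise bounds
\[
0\le f^n_R(x,v)\le N^0_0(f^n_R),\qquad (1+|C_2(v)|)^q\,f^n_R(x,v)\le C_q\,N^0_q(f^n_R),
\]
valid at \emph{every} $(x,v)$: the extension $E$ is a convex combination of nodal values in $x$ (hence preserves pointwise upper bounds) and is piecewise constant in $v$, equal to the value at the cell centre, so that $C_2(v)$ selects exactly that cell, and $|C_2(v)-v|\le\triangle v/2$. In particular only the $N^0$-parts of $N_q,N_0$ are actually used.

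With $g(v):=f^n_R(x,v)$ and all moments taken against $C_2(v)$, the four inequalities are obtained by the classical truncation arguments, in order. For the first, split $\rho^n_R=\int_{|C_2(v)-U^n_R|\le\lambda}g\,dv+\int_{|C_2(v)-U^n_R|>\lambda}g\,dv\le C\,N^0_0(f^n_R)(\lambda+\triangle v)^d+d\rho^n_RT^n_R/\lambda^2$ and take $\lambda^2=2d\,T^n_R$, so the tail is $\tfrac12\rho^n_R$ and (since $\triangle v$ is bounded and the relevant $\lambda$ is $\gtrsim\triangle v$) $\rho^n_R\le C\,N^0_0(f^n_R)(T^n_R)^{d/2}$. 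For the second, set $\Theta=T^n_R+|U^n_R|^2$, use $\int g|C_2(v)|^2dv=d\rho^n_RT^n_R+\rho^n_R|U^n_R|^2\ge\rho^n_R\Theta$, split this integral at $|C_2(v)|=\tfrac12\sqrt\Theta$ (inner piece $\le\tfrac14\Theta\rho^n_R$, hence outer piece $\ge\tfrac34\rho^n_R\Theta$), and bound the outer piece from above by $C_q\,N^0_q(f^n_R)\int_{|w|>\frac12\sqrt\Theta}|w|^{2-q}dw\le C_q\,N^0_q(f^n_R)\,\Theta^{(d-q+2)/2}$, the integral being finite \emph{precisely because $q>d+2$}; comparing gives $\rho^n_R\Theta^{(q-d)/2}\le C_q\,N^0_q(f^n_R)$ once $\Theta$ is bounded below, while for small $\Theta$ the crude bound $\rho^n_R\le N^0_q(f^n_R)\int(1+|v|)^{-q}dv\le C_q\,N^0_q(f^n_R)$ already suffices. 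The third is the same splitting centred at $U^n_R$ with radius $\sqrt{2d\,T^n_R}$: the outer piece is $\le\tfrac12\rho^n_R$, and the inner piece is $\le C_q\,N^0_q(f^n_R)\int_{|w-U^n_R|\le\sqrt{2dT^n_R}}|w|^{-q}dw\le C_q\,N^0_q(f^n_R)\,(T^n_R)^{(d-q)/2}\le C_q\,N^0_q(f^n_R)\,\Theta^{(d-q)/2}$, the last integral converging \emph{at the origin} exactly because $q<d$. The fourth follows by a variant of the same truncation combined with the first estimate and with $(T^n_R+|U^n_R|^2)\ge|U^n_R|^2$: the pointwise bound forces $g$ near $v=U^n_R$ to spread over a velocity ball of volume $\gtrsim\rho^n_R(1+|U^n_R|)^q/N^0_q(f^n_R)$, which feeds a lower bound on $d\rho^n_RT^n_R=\int g|C_2(v)-U^n_R|^2dv$; the threshold $q>1$ is what makes this closing step work, exactly as in \cite{P-P}.

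The only genuinely new feature compared with \cite{P-P} is the velocity discretization. Since $|C_2(v)-v|\le\triangle v/2$ with $\triangle v$ bounded, $c(1+|v|)\le 1+|C_2(v)|\le C(1+|v|)$ everywhere, so the weighted pointwise bound and all the velocity integrals above change only through harmless constants; likewise $|\{v:|C_2(v)-U|\le\lambda\}|\le C(\lambda+\triangle v)^d\le C\lambda^d$ in the regime $\lambda\gtrsim\triangle v$ that actually occurs. The degenerate possibility $\lambda\lesssim\triangle v$, i.e. $T^n_R$ minuscule compared with $(\triangle v)^2$, does not arise at points with $\rho^n_R(x)>0$, since the strictly positive Maxwellian term in \eqref{refom_scheme} keeps $f^n_R(x,v)>0$ on every cell $|v|\le R$; where $\rho^n_R(x)=0$ the profile $f^n_R(x,\cdot)$ vanishes identically and every inequality is trivial. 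Finally, each constant produced above is independent of $x$ (convex combination in $x$), so the estimates are uniform in $x$, and then hold for $\widetilde f^n_R$ via Lemma~\ref{Nqf_Nqtildef}.

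I expect the main obstacle to be extracting the \emph{sharp} powers $\Theta^{(q-d)/2}$, $\Theta^{(d-q)/2}$ and the $(1+|U^n_R|)^{-q}$-gain in the last three estimates: a naive moment bound such as $\int g\,(1+|v|)^{-q}|v|^{q-d}dv$ is only logarithmically divergent, so one cannot simply estimate moments — one must split the velocity integral at the thermal radius $\sqrt{T^n_R}$ (or $\sqrt\Theta$) and exploit the exact second‑moment identity $\int g|C_2(v)|^2dv=d\rho^n_RT^n_R+\rho^n_R|U^n_R|^2$. This is precisely where the conditions $q>d+2$, $q<d$, $q>1$ enter, and everything else is bookkeeping of the discretization parameters.
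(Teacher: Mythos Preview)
Your approach is essentially the same as the paper's: both adapt the truncation/optimization arguments of Perthame--Pulvirenti \cite{P-P} to the discrete moments, splitting the relevant velocity integral at a threshold and balancing the two pieces. The paper proves \eqref{PS2} directly and says \eqref{PS1} is similar, whereas you prove \eqref{PS1} and deduce \eqref{PS2} via Lemma~\ref{Nqf_Nqtildef}; this is cosmetic. Two small differences are worth flagging. First, the paper leaves a free parameter $D$ in each splitting and then optimizes it (e.g.\ $D=(d\widetilde\rho^n_R\widetilde T^n_R/CN_q)^{1/(d+2)}$ for (i)), while you fix the threshold a priori ($\lambda^2=2dT^n_R$, $|C_2(v)|=\tfrac12\sqrt\Theta$, etc.); the two are equivalent. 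Second, for estimate (iv) the paper proceeds by H\"older's inequality on $\widetilde\rho^n_R|\widetilde U^n_R|\le\int\widetilde f^n_R|C_2(v)|\,dv$, splits at $|C_2(v)-\widetilde U^n_R|=D$, and optimizes $D$; your sketch instead appeals to a geometric ``spreading'' lower bound on $T^n_R$. Both lead to the same conclusion, but the paper's H\"older route is more explicit and is in fact what \cite{P-P} does, so your closing phrase ``exactly as in \cite{P-P}'' points to their argument rather than the one you outlined. Finally, your discussion of the discretization artefacts ($|C_2(v)-v|\le\triangle v/2$, the measure bound $|\{|C_2(v)-U|\le\lambda\}|\le C(\lambda+\triangle v)^d$, and the degenerate regime $\lambda\lesssim\triangle v$) is more careful than the paper, which silently treats $C_2(v)$ as if it were $v$ in these volume estimates.
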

\begin{proof}
We only prove (\ref{PS2}). (\ref{PS1}) can be proved in a similar manner.\newline
(i) The estimate of $\frac{\widetilde\rho^n_R}{(\widetilde T^n_R)^{\frac{d}{2}}}$:
We have from \eqref{Conservatives_Reform}
\begin{eqnarray*}
\widetilde\rho^n_R&=&\int \widetilde f^n_R(x,v)dv\\
&=&\frac{1}{D^2}\int_{|C_2(v)-\widetilde U^n_R(x)|\geq D} \widetilde f^n_R|C_2(v)-\widetilde U^n_R(x)|^2dv+\int_{|C_2(v)-\widetilde U^n_R|<D}\widetilde f^n_Rdv\\
&=&\frac{d}{D^2}\widetilde\rho^n_R\widetilde T^n_R+CD^dN_q(f^n_R).
\end{eqnarray*}
We take $D=\Big(\frac{d\widetilde\rho^n_R\widetilde T^n_R}{CN_q(f^n)}\Big)^{\frac{1}{d+4}}$ to obtain
\[
\frac{\widetilde\rho^n_R}{(\widetilde T^{n}_R)^{\frac{N}{2}}}\leq C_qN_0(f^n_R).
\]
This proves \eqref{PS1}.\newline
(ii) The estimate of $\widetilde\rho^n_R(\widetilde T^n_R+|\widetilde U^n_R|^2)^{\frac{q-d}{2}}$ : We note that
\begin{eqnarray*}
\widetilde\rho^n_R(d\widetilde T^n_R+|\widetilde U^n_R|^2)&=&\int \widetilde f^n_R|C_2(v)-U^n_R(x)|^2d v+\rho^n_R(x)U^n_R(x)\cr
&=&\int \widetilde f^n_R|C_2(v)|^2d v\cr
&\leq&\int_{|C_2(v)|>D}\widetilde f^n_{R}\frac{|C_2(v)|^q}{|C_2(v)|^{q-2}}d v+\int_{|C_2(v)|\leq D}\widetilde f^n_{R}|C_2(v)|^2dv\cr
&\leq&N_q(f^n_R)\int_{|C_2(v)|>D}\frac{1}{|C_2(v)|^{q-2}}dv+D^2\int_{|C_2(v)|\leq R}\widetilde f^n_{R}dv\cr
&\leq& C_q\frac{N_{q}(f^n_R)}{D^{q-d-2}}+\widetilde\rho^n_RD^2.
\end{eqnarray*}
We take
\[
D=\Big(\frac{N_q(f^n_R)}{\widetilde\rho^n_R}\Big)^{\frac{1}{q-d}}
\]
to complete the proof.\newline
(iii) The estimate of $\frac{\widetilde\rho^n_R}{(\widetilde T^n_R+|\widetilde U^n_R|^2)^{\frac{d-q}{2}}}$ : Note that
\begin{eqnarray*}
\widetilde\rho^n_R&=&\int_{\bbr^d}\widetilde f^n_Rdv\cr
&=&\int_{|C_2(v)|\leq D}\widetilde f^n_Rdv+\int_{|C_2(v)|> D}\widetilde f^n_Rdv\cr
&=&\int_{|C_2(v)|\leq D}\widetilde f^n_R\frac{|C_2(v)|^q}{|C_2(v)|^q}dv+\frac{1}{D^2}\int_{|C_2(v)|> D}\widetilde f^n_R|C_2(v)|^2dv\cr
&\leq&C_qD^{d-q}N_q(f^n_R)+\frac{\rho^n_R(dT^n_R+|U^n_R|^2)}{D^2}.
\end{eqnarray*}
We take  $D^{d-q+2}=\widetilde\rho(d\widetilde T^n_R+|\widetilde U^n_R|^2)^{1-\frac{2}{d-q+2}}$
to see
\[
\rho^n_R\leq C_qN_q(f^n_R)^{\frac{2}{d-q+2}}[\widetilde\rho(d \widetilde T^n_R+|\widetilde U^n_R|^2)]^{1-\frac{2}{d-q+2}}.
\]
(iv) The estimate of $\frac{\widetilde\rho^n_R|\widetilde U^n_R|^{d+q}}{[(\widetilde T^n_R+|\widetilde U^n_R|^2)]^{\frac{d}{2}}}$ :
For $q>1$, we have by H\"{o}lder inequality,
\begin{eqnarray*}
\widetilde\rho^n_R|\widetilde U^n_R|&\leq&\int \widetilde f^n_R|C_2(v)|dv\cr
&\leq&\int_{|C_2(v)-\widetilde U^n_R|\leq D} \widetilde f^n_R|C_2(v)|dv+\int_{|C_2(v)-\widetilde U^n_R|>D} \widetilde f^n_R|C_2(v)|dv\cr
&\leq&\big(\widetilde\rho^n_R\big)^{1-\frac{1}{q}}\Big(\int_{|C_2(v)-\widetilde U^n_R|\leq D} \widetilde f^n_R|C_2(v)|^qdv\Big)^{\frac{1}{q}}
+\frac{1}{D}\int_{\bbr^d}|C_2(v)-\widetilde U^n_R||C_2(v)|\widetilde f^n_Rdv\cr
&\leq&C(\widetilde\rho^n_R)^{1-\frac{1}{q}}N_q(f^n_R)^{\frac{1}{q}}D^{\frac{D}{q}}
+\frac{1}{D}\Big(\int|C_2(v)|^2\widetilde f^n_Rdv\Big)^{\frac{1}{2}}\Big(\int|C_2(v)-\widetilde U^n_R|^2\widetilde f^n_Rdv\Big)^{\frac{1}{2}}\cr
&\leq&C(\widetilde\rho^n_R)^{1-\frac{1}{q}}N_q(f^n_R)^{\frac{1}{q}}D^{\frac{d}{q}}+\frac{\widetilde\rho^n_R}{D}(d\widetilde T^n_R+|\widetilde U^n_R|^2)^{\frac{1}{2}}\widetilde T_R^{n\frac{1}{2}}.
\end{eqnarray*}
We maximize the estimate by taking
\[
D^{\frac{d+q}{q}}=\frac{\widetilde\rho^{\frac{1}{q}}(d\widetilde T^n_R+|\widetilde U^n_R|^2)^{\frac{1}{2}}\widetilde T^{\frac{1}{2}}}{N_q(f^n_R)^{\frac{1}{q}}}
\]
to obtain the desired result.
\end{proof}
The next lemma shows that the $N_q$- norm of the discrete local Maxwellian can be controlled by the $N_q$- norm 
of the approximate distribution function.
%
%
\begin{lemma}\label{ControlofMaxwellian} Suppose $N_q(f^n_R)<\infty$ with $q>d+2$, then we have
\[
N_q(\mathcal{M}^n(f^n_R))\leq C_qN_q(f^n_R).
\]
\end{lemma}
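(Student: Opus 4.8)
The plan is to split $N_q=N^0_q+N^1_q$ and control the two contributions of $\mathcal{M}^n(f^n_R)$ separately: the weighted sup-norm part is elementary given Lemma \ref{control_macroscopic values}, while the discrete-derivative part is where the real work lies.

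For $N^0_q$ I would fix a node $(x_m,v_\ell)$, abbreviate $\rho,U,T$ for $\rho^n_R(x_m),U^n_R(x_m),T^n_R(x_m)$, and use $|v_\ell|\le|C_2(v_\ell)-U|+|U|+\triangle v/2$ together with convexity to get $(1+|v_\ell|)^q\le C_q\big(1+|C_2(v_\ell)-U|^q+|U|^q\big)$. Multiplying by the Gaussian, substituting $w=(C_2(v_\ell)-U)/\sqrt{T}$, and using $\sup_w|w|^ke^{-|w|^2/2}\le C_k$ reduces everything to the three quantities $\rho/T^{d/2}$, $\rho T^{(q-d)/2}$ and $\rho|U|^q/T^{d/2}$. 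The first two are bounded by $C_qN_q(f^n_R)$ directly by Lemma \ref{control_macroscopic values}; for the third I would split on $|U|^2\le T$ versus $|U|^2>T$ (in the latter case $(T+|U|^2)^{d/2}\le 2^{d/2}|U|^d$, so the fourth inequality of Lemma \ref{control_macroscopic values} applies). This yields $N^0_q(\mathcal{M}^n(f^n_R))\le C_qN_q(f^n_R)$.

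For $N^1_q$ the idea is to regard $\mathcal{M}^n(f^n_R)(x,v)=\Phi\big(P(x),Q(x),\widetilde E(x);C_2(v)\big)$, where $P=\rho^n_R$, $Q=\rho^n_R U^n_R$, $\widetilde E=\int f^n_R|C_2(v)|^2dv$ are linear in $f^n_R$, hence affine in $x$ on each spatial cell, and $\Phi$ is the Gaussian expressed through $\rho=P$, $U=Q/P$, $T=\widetilde E/(dP)-|Q|^2/(dP^2)$. Since the physical region $\{P>0,\ P\widetilde E>|Q|^2\}$ is convex, the affine path across a cell stays in it, so $\Phi$ is smooth in $x$ on the cell and the difference quotient between $x_m$ and $x_{m+1}$ equals $\partial_P\Phi\cdot\Delta P/\triangle x+\partial_Q\Phi\cdot\Delta Q/\triangle x+\partial_{\widetilde E}\Phi\cdot\Delta\widetilde E/\triangle x$, with the $\Phi$-derivatives evaluated at the macroscopic state of $f^n_R$ itself on that cell. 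I would first bound $|\Delta P|,|\Delta Q|,|\Delta\widetilde E|\le C_q\triangle x\,N^1_q(f^n_R)$ by pulling the polynomial weight inside the moment integrals (this is where $q>d+2$ is used, via $\int(1+|v|)^{2-q}dv<\infty$), and then bound $(1+|v|)^q|\partial_P\Phi|$, $(1+|v|)^q|\partial_Q\Phi|$, $(1+|v|)^q|\partial_{\widetilde E}\Phi|$ uniformly in $v$ using $\partial_\rho\mathcal{M}=\mathcal{M}/\rho$, $\partial_U\mathcal{M}=\mathcal{M}(C_2(v)-U)/T$, $\partial_T\mathcal{M}=\mathcal{M}(|C_2(v)-U|^2/2T^2-d/2T)$, the chain rule for $U=Q/P$ and for $T$, and the same Gaussian-tail computation as in the $N^0_q$ step together with Lemma \ref{control_macroscopic values}.

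The hard part will be exactly this last estimate on $\partial_P\Phi,\partial_Q\Phi,\partial_{\widetilde E}\Phi$: differentiating in the conservative variables produces negative powers of $\rho^n_R$ and $T^n_R$ (through $U=Q/P$ and through $T$), so one cannot close the estimate from $N_q(f^n_R)<\infty$ alone — besides the moment inequalities of Lemma \ref{control_macroscopic values} one needs the uniform positivity of the discrete density and temperature (and the resulting upper bounds on $T^n_R$ and $|U^n_R|$ in terms of $N_q(f^n_R)$) available in the regime under consideration, in order to absorb the singular factors, all while carefully tracking how the weight $(1+|v|)^q$ interacts with the exponential when $|U^n_R|$ is large. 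Once these pointwise bounds on the partials are in hand, combining them with the moment difference-quotient estimates gives $N^1_q(\mathcal{M}^n(f^n_R))\le C_qN_q(f^n_R)$, and adding the two parts completes the proof.
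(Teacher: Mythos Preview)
Your proposal is correct and follows essentially the same two-part strategy as the paper: bound $N^0_q$ via the decomposition $(1+|v|)^q\le C_q(1+|C_2(v)-U|^q+|U|^q)$ together with Lemma~\ref{control_macroscopic values} (including the same two-case split on $|U|\lessgtr\sqrt{T}$), and bound $N^1_q$ by a mean-value expansion of the Maxwellian combined with finite-difference estimates on the macroscopic fields.

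The only variation is in the $N^1_q$ step. The paper Taylor-expands directly in the \emph{primitive} variables $(\rho,U,T)$, evaluates the partials at the macroscopic state of a convex combination $\theta f^n_{i+1,j}+(1-\theta)f^n_{i,j}$, and then bounds $|\rho^n_{i+1,R}-\rho^n_{i,R}|$, $|U^n_{i+1,R}-U^n_{i,R}|$, $|T^n_{i+1,R}-T^n_{i,R}|$ by $C_qN^1_q(f^n_R)\triangle x$. You instead parametrize through the \emph{conservative} moments $(P,Q,\widetilde E)$, which are exactly affine in $x$ on each cell; this makes the mean-value step cleaner (the intermediate state is manifestly physical by convexity, and $|\Delta P|,|\Delta Q|,|\Delta\widetilde E|\le C_q\triangle x\,N^1_q(f^n_R)$ requires no lower bound on $\rho$), at the price of pushing the $\rho^{-1},T^{-1}$ factors into the chain-rule derivatives of $\Phi$. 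Either way the same structural input---uniform positivity of $\rho^n_R$ and $T^n_R$ and the resulting bounds on $|U^n_R|,T^n_R$---is needed to close, exactly as you flag; the paper simply absorbs this into the constant written $C_{q,T}$ without further comment.
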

\begin{proof}
(I) The estimate of $N^0_q(\mathcal{M}^n(f^n_R))$ :
We observe that
\begin{align}
\begin{aligned}\label{DecompositionM}
(1+|C_2(v)|)^q {\mathcal M}^n(f^n_R)&\leq C_q(1+|U^n_R|^q+|C_2(v)-U^n_R|^q) {\mathcal M}^n(f^n_R)\cr
&\leq C_q\Big(\frac{\rho^n_R}{(T^n_R){\frac{d}{2}}}+\rho^n_R\frac{|U^n_R|^q}{(T^n_R)^{\frac{d}{2}}}+\rho^n_R (T^n_R)^{\frac{(q-d)}{2}}\Big).
\end{aligned}
\end{align}
By \eqref{PS1}, we have
\begin{equation}
\frac{\rho^n_R}{(T^n_R){\frac{d}{2}}}, \hspace{0.15cm}\rho^n_R (T^n_R)^{\frac{(q-d)}{2}}\leq C_q N_q(f^n_R).\label{rhoT}
\end{equation}
The estimate of $\rho^n_R\frac{|U^n_R|^q}{(T^n_R)^{\frac{d}{2}}}$ is more involved. We divide it into
the following two cases.\\
Case 1: $|U^n_R|>(T^n_R)^{\frac{1}{2}}$.\\
We have from (\ref{PS1})
\begin{align}
\begin{aligned}\label{rhoUT1}
\rho^n_R\frac{|U^n_R|^q}{(T^n_R)^{\frac{d}{2}}}&\leq\rho^n_R\frac{|U^n_R|^{d+q}}{|U^n_R|^d(T^n_R)^{\frac{d}{2}}}\cr
&\leq C_q\frac{\rho^n_R|U^n_R|^{d+q}}{(T^n_R+|U^n_R|^2)^{\frac{d}{2}}(T^n_R)^{\frac{d}{2}}}\cr
&\leq C_qN_q(f^n_R).
\end{aligned}
\end{align}
Case 2: $|U^n_R|\leq (T^n_R)^{\frac{1}{2}}$\\
We apply \eqref{PS1} to see
\begin{align}
\begin{aligned}\label{rhoUT2}
\rho^n_R\frac{|U^n_R|^q}{(T^n_R)^{\frac{d}{2}}}&\leq\rho^n_R (U^n_R)^{q-d}\cr
&\leq\rho^n_R (T^n_R+|U^n_R|^2)^{\frac{q-d}{2}}\cr
&\leq C_qN_q(f^n_R).
\end{aligned}
\end{align}
We combine the Case I and Case II to obtain
\begin{equation}
\rho^n_R\frac{|U^n_R|^q}{(T^n_R)^{\frac{d}{2}}}\leq C_qN^0_q(f^n_R).\label{rhoUTfinal}
\end{equation}
We then substitute (\ref{rhoT}), (\ref{rhoUTfinal}) into (\ref{DecompositionM}) to complete the proof.\newline

(II) The estimate of $N^1_q(\mathcal{M}(f^n_R))$ : 
We have by Taylor's theorem
\begin{align}
\begin{aligned}\label{MfnR-MfnR}
&{\mathcal M}^n(f^n_R)(x_{i+1},v_j)-{\mathcal M}^n(f^n_R)(x_i,v_j)\cr
&\hspace{0.5cm}=\frac{\rho^n_{i+1,R}}{\sqrt{(2\pi T^n_{i+1,R})^d}}\exp\Big(-\frac{|v_j-U^n_{i+1,R}|^2}{2T^n_{i+1,R}}\Big)
-\frac{\rho^n_{i,R}}{\sqrt{(2\pi T^n_{i,R})^d}}\exp\Big(-\frac{|v_j-U^n_{i,R}|^2}{2T^n_{i,R}}\Big)\cr
&\hspace{0.5cm}=(\rho^n_{i+1,R}-\rho^n_{i,R})\frac{\partial{\mathcal M}^n(\theta)}{\partial \rho}
+(U^n_{i+1,R}-U^n_{i,R})\cdot\frac{\partial {\mathcal M}^n(\theta)}{\partial U_i}\cr
&\hspace{0.5cm}+(T^n_{i+1,R}-T^n_{i,R})\frac{\partial {\mathcal M}^n(\theta)}{\partial T},
\end{aligned}
\end{align}
where
\[
\frac{\partial {\mathcal M}^n(\theta)}{\partial X}\equiv
\frac{\partial {\mathcal M}^n}{\partial X}\Big|_{\theta f^n_{i+1,j}+(1-\theta)f^n_{i,j}}
\]
for some $0\leq\theta\leq1$.
We recall that
\begin{eqnarray*}
\frac{\partial {\mathcal M}^n}{\partial \rho^n_R}&=&\frac{1}{\sqrt{(2\pi T^n_R)^d}}\exp\Big(-\frac{|v_j-U^n_R|^2}{2T^n_R}\Big)
\end{eqnarray*}
to get
\begin{eqnarray*}
\frac{\partial {\mathcal M}^n}{\partial \rho^n_R}(1+|v_j|)^q&=&\frac{1}{\sqrt{(2\pi T^n_R)^d}}\exp\Big(-\frac{|v_j-U^n_R|^2}{2T^n_R}\Big)(1+|v_j|)^q\\
&\leq&\frac{1}{\sqrt{(2\pi T^n_R)^d}}\exp\Big(-\frac{|v_j-U^n_R|^2}{2T}\Big)(1+|v_j-U^n_R|^q+|U^n_R|^q)\\
&\leq& C_{q,T}.
\end{eqnarray*}
Other estimates can be obtained similarly as follows:
\begin{eqnarray*}
\frac{\partial {\mathcal M}^n}{\partial U^n_R}(1+|v_j|)^q&\leq& C_{q,T},\\
\frac{\partial {\mathcal M}^n}{\partial T^n_R}(1+|v_j|)^q&\leq& C_{q,T}.
\end{eqnarray*}
We substitute the above estimates into \eqref{MfnR-MfnR} to see
\begin{align}
\begin{aligned}\label{|MfnR-MfnR|}
&|{\mathcal M}^n(f^n_R)(x_{i+1},v_j)-{\mathcal M}^n(f^n_R)(x_i,v_j)|(1+|v_j|)^q\\
&\hspace{2cm}\leq C_q\Big(|\rho^n_{i+1,R}-\rho^n_{i,R}|
+|U^n_{i+1,R}-Un_{i,R}|+|T^n_{i+1,R}-T^n_{i,R}|\Big).
\end{aligned}
\end{align}
We now estimate each terms separately. First we observe
\begin{align}
\begin{aligned}\label{|rhonR-rhonR|}
|\rho^n_{i+1,R}-\rho^n_{i,R}|&=\Big|\sum_{j} (f^n_{i+1,j,R}-f^n_{i,j,R})\triangle v\Big|\cr
&=\sup_{i,j}\Big|\frac{f^n_{i+1,j,R}-f^n_{i,j,R}}{\triangle x}(1+|v_j|)^{q}\Big|\sum_{j}\frac{\triangle x \triangle v}{(1+|v_j|)^{q}}\cr
&\leq C_qN^1_{q}(f^n_R)\triangle x.
\end{aligned}
\end{align}
Similarly, we have
\begin{eqnarray}
|U^n_{i+1,R}-U^n_{i,R}|&\leq&C_qN^1_{q}(f^n_R)\triangle x,\label{|UnR-UnR|}\\
|T^n_{i+1,R}-T^n_{i,R}|&\leq&C_qN^1_{q}(f^n_R)\triangle x.\label{|TnR-TnR|}
\end{eqnarray}
We substitute \eqref{|rhonR-rhonR|}, \eqref{|UnR-UnR|}, \eqref{|TnR-TnR|} into \eqref{|MfnR-MfnR|} to obtain
\begin{eqnarray*}
N^1({\mathcal M}^n(f^n_R))&=&\sup_{i,j}\Big|\frac{{\mathcal M}^n(f^n_R)(x_{i+1},v_j)-{\mathcal M}^n(f^n_R)(x_i,v_j)}{\triangle x}(1+|v_j|)^2\Big|\\
&\leq&C_qN^1_{q}(f^n_R).
\end{eqnarray*}
\end{proof}
We now establish the stability estimate for the scheme (\ref{refom_scheme}).
%
%
\begin{lemma}\label{ControllonFnR}
Suppose $N_q(f^0_R)<\infty$ with $q>d+2$. Then we have
\[
N_q(f^n_R)<e^{C_qT_f}N_q(f^0_R).
\]
\end{lemma}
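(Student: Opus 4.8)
The plan is to run an induction on $n$, exploiting the convex-combination structure of the scheme \eqref{refom_scheme}. Assume $N_q(f^n_R)\le C$; I claim $N_q(f^{n+1}_R)$ grows by at most a factor $1+C_q\triangle t$. The key observation is that the extension operator $E(\cdot)$ and the reconstruction $\widetilde{f}^n_R$ are built from convex combinations of nodal values, so they do not increase the relevant $\sup$-norms: for the zeroth-order part this is immediate, and for the first-order (discrete-derivative) part it is exactly the content of Lemma \ref{Nqf_Nqtildef}, with the analogous statement for $E$ following by the same one-line estimate since the interpolation coefficients $\frac{x-x_i}{\triangle x}$ and $\frac{x_{i+1}-x}{\triangle x}$ are nonnegative and sum to one. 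Hence $N_q(\widetilde{f}^n_R)\le N_q(f^n_R)$ and $N_q\big(E(\mathcal{M}^n(\widetilde{f}^n_R))\big)\le N_q\big(\mathcal{M}^n(\widetilde{f}^n_R)\big)$.

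Next I would feed this through the Maxwellian bound. By Lemma \ref{ControlofMaxwellian} applied to $\widetilde{f}^n_R$ (whose macroscopic fields are $\widetilde\rho^n_R,\widetilde U^n_R,\widetilde T^n_R$), together with Lemma \ref{Nqf_Nqtildef},
\[
N_q\big(\mathcal{M}^n(\widetilde{f}^n_R)\big)\le C_q N_q(\widetilde{f}^n_R)\le C_q N_q(f^n_R).
\]
Now apply $N_q$ to \eqref{refom_scheme} and use the triangle inequality together with the fact that $\frac{\kappa}{\kappa+\triangle t},\frac{\triangle t}{\kappa+\triangle t}$ are nonnegative and sum to one:
\[
N_q(f^{n+1}_R)\le \frac{\kappa}{\kappa+\triangle t}N_q(\widetilde{f}^n_R)+\frac{\triangle t}{\kappa+\triangle t}N_q\big(E(\mathcal{M}^n(\widetilde{f}^n_R))\big)\le \frac{\kappa}{\kappa+\triangle t}N_q(f^n_R)+\frac{\triangle t}{\kappa+\triangle t}C_q N_q(f^n_R).
\]
Writing $\frac{\kappa}{\kappa+\triangle t}+\frac{\triangle t}{\kappa+\triangle t}C_q=1+\frac{(C_q-1)\triangle t}{\kappa+\triangle t}\le 1+C_q\triangle t$ (absorbing $\kappa$-dependence into the constant, or noting $\kappa+\triangle t\ge\kappa$), we get $N_q(f^{n+1}_R)\le(1+C_q\triangle t)N_q(f^n_R)$.

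Iterating from $n=0$ gives $N_q(f^n_R)\le(1+C_q\triangle t)^n N_q(f^0_R)\le e^{C_q n\triangle t}N_q(f^0_R)\le e^{C_q T_f}N_q(f^0_R)$, since $n\triangle t\le N_t\triangle t=T_f$. The one point that needs care — and the only real obstacle — is verifying that $N_q$ is genuinely non-increasing under both $E$ and the reconstruction $\widetilde{(\cdot)}$; the zeroth-order part is trivial, but the first-order part requires the telescoping argument in the proof of Lemma \ref{Nqf_Nqtildef} (the trick of writing the two reconstructed values with the \emph{same} interpolation weight $a$ so that their difference quotient is again a convex combination of difference quotients at shifted indices). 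Everything else is a mechanical combination of Lemmas \ref{Nqf_Nqtildef} and \ref{ControlofMaxwellian} with the discrete Grönwall inequality. I should also check that $N_q(f^n_R)<\infty$ is propagated so that Lemma \ref{ControlofMaxwellian} applies at each step, but this is automatic from the bound just derived, starting from the hypothesis $N_q(f^0_R)<\infty$.
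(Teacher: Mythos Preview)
Your proposal is correct and follows essentially the same approach as the paper: take $N_q$ of \eqref{refom_scheme}, use Lemma \ref{Nqf_Nqtildef} and Lemma \ref{ControlofMaxwellian} to bound both terms on the right by $N_q(f^n_R)$ (up to the constant $C_q$ on the Maxwellian piece), obtain the one-step growth factor $1+\frac{(C_q-1)\triangle t}{\kappa+\triangle t}$, and iterate with $(1+x)^n\le e^{nx}$ and $n\triangle t\le T_f$. Your added remark that the extension operator $E$ does not increase $N_q$ is a point the paper leaves implicit, so if anything your write-up is slightly more careful.
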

\begin{proof}
We take $N_q$ norm on both sides of (\ref{refom_scheme}) and apply Lemma \ref{Nqf_Nqtildef} and Lemma \ref{ControlofMaxwellian} to see
\begin{eqnarray*}
N_q(f^{n}_R)&\leq&\frac{\kappa}{\kappa+\triangle t}N_q(f^{n-1}_R)
+\frac{\triangle t}{\kappa+\triangle t}N_q({\mathcal M}(\widetilde{f}^{n-1}_R))\\
&\leq&\frac{\kappa}{\kappa+\triangle t}N_q(f^{n-1}_R)
+\frac{\triangle t}{\kappa+\triangle t}C_TN_q(f^{n-1}_R)\\
&\leq&\frac{\kappa+C_q\triangle t }{\kappa+\triangle t}N_q(f^{n-1}_R)\\
&\leq&\Big(1+\frac{(C_q-1)\triangle t }{\kappa+\triangle t}\Big)N_q(f^{n-1}_R).
\end{eqnarray*}
Iterating the above inequality, we obtain
\begin{eqnarray*}
N_q(f^{n}_R)&\leq& \Big(1+\frac{(C_q-1)\triangle t }{\kappa+\triangle t}\Big)^{n}N_q(f^{0}_R)\cr
&\leq&e^{\frac{(C_q-1)N\triangle t}{\kappa+\triangle t}}(f^{0}_R)\cr
&\leq&e^{\frac{(C_q-1)T}{\kappa+\triangle t}}N_q(f^{0}_R),
\end{eqnarray*}
where we used
$(1+x)^n\leq e^{nx}$ and $n\triangle t\leq N_t\triangle t=T_f$.
\end{proof}
%
%
\begin{lemma}\label{ULBoundsMacroscopicDiscreteLemma}
Let $q>d+2$ and $N_q(f^n_R)<\infty$. Suppose that the time step is bounded in the sense that
\begin{equation}\label{no.small2}
\triangle t<\max\{\frac{1}{2},\kappa\}
\end{equation}
and the mesh size for spacial and velocity nodes satisfies the following smallness condition:
\begin{equation}\label{smallness2}
\triangle x+\triangle v<\frac{\int f_{0R}(x-vT_f,v)dv}{2\overline{N}_q(f_0)(2+T_f)}.
\end{equation}
Then the following estimates holds for approximate macroscopic fields.
\begin{eqnarray}
&\|\rho^n_R(t)\|_{L^{\infty}_x}+\|U^n_R(t)\|_{L^{\infty}_x}+\|T^n_R(t)\|_{L^{\infty}_x}\leq C_qN_q(f^0_R)e^{C_qT_f},\nonumber\\
&\rho^n_R(x,t)\geq C_qe^{-C_qT_f},\label{ULBoundsMacroscopicDiscrete}\\
&T^n_R(x,t)\geq C_{q}e^{-C_qT_f}>0.\nonumber
\end{eqnarray}
\end{lemma}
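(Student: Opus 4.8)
The plan is to prove the three estimates in a single induction on the time level $n$, using the reformulated scheme \eqref{refom_scheme}. For the upper bounds, first observe that the upper bounds on $\rho^n_R$, $U^n_R$, $T^n_R$ follow immediately from the stability estimate of Lemma \ref{ControllonFnR} together with Lemma \ref{control_macroscopic values}: indeed, $\rho^n_R \le C N^0_q(f^n_R) \le C N_q(f^n_R) \le C N_q(f^0_R)e^{C_q T_f}$ directly, while $|U^n_R| \le (T^n_R+|U^n_R|^2)^{1/2}$ and $T^n_R \le T^n_R+|U^n_R|^2$ are controlled by combining the second inequality of \eqref{PS1} with the lower bound on $\rho^n_R$ that we establish next. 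So the upper bounds are essentially free once the density lower bound is in hand. The genuinely substantive point, and the reason the mesh hypotheses \eqref{no.small2}, \eqref{smallness2} are imposed, is the lower bound $\rho^n_R(x,t) \ge C_q e^{-C_q T_f}$; the temperature lower bound will follow from it by a similar but separate argument.

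For the density lower bound, I would argue as follows. From \eqref{refom_scheme}, at the grid level $f^{n+1}_{i,j,R} = \frac{\kappa}{\kappa+\triangle t}\widetilde f^n_{i,j,R} + \frac{\triangle t}{\kappa+\triangle t}\mathcal{M}^n_{i,j}(\widetilde f^n_R) \ge \frac{\kappa}{\kappa+\triangle t}\widetilde f^n_{i,j,R}$ since the Maxwellian is nonnegative; equivalently $f^{n+1}_R \ge \frac{\kappa}{\kappa+\triangle t}\widetilde f^n_R$ pointwise, because the extension operator $E$ is monotone (it is a convex combination with nonnegative weights of nonnegative grid values). Now $\widetilde f^n_R$ is the linear-in-$x$ reconstruction of $f^n_R$ along the backward characteristic $x \mapsto x - v\triangle t$, so integrating in $v$ and iterating $n$ times down to $n=0$ yields a lower bound of the form
\begin{eqnarray*}
\rho^{n}_R(x) \ge \Big(\frac{\kappa}{\kappa+\triangle t}\Big)^{n}\int f^0_R\big(x - v\, n\triangle t + (\text{reconstruction error}),\, v\big)\,dv.
\end{eqnarray*}
The prefactor is bounded below by $e^{-C_q T_f}$ because $\big(\frac{\kappa}{\kappa+\triangle t}\big)^n = \big(1 - \frac{\triangle t}{\kappa+\triangle t}\big)^n \ge e^{-C n\triangle t/\kappa} \ge e^{-C T_f/\kappa}$ when $\triangle t < \kappa$ (one branch of \eqref{no.small2}); if instead $\triangle t < 1/2$, the factor $\frac{\kappa}{\kappa+\triangle t}$ is still bounded away from $0$ uniformly, so in either case \eqref{no.small2} does the job. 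The integral term is then compared to $\int f_{0R}(x - vT_f, v)\,dv$, which is positive by hypothesis \eqref{LowerBoundforLocalDensity} (note $f_{0R} = f_0\mathcal{X}_{|v|<R}$ differs from $f_0$ only on $|v|\ge R$, harmless for the lower bound for $R$ large), minus the total accumulated error coming from (a) replacing $x - v\,n\triangle t$ by the piecewise-linear reconstructed characteristic footpoint and (b) the velocity-grid truncation $C_2(v)$ vs.\ $v$. This is exactly where \eqref{smallness2} enters: each reconstruction step introduces an error controlled by $(\triangle x + \triangle v)\,\overline N_q(f_0)$-type quantities, and summing over at most $N_t$ steps with $N_t\triangle t = T_f$ produces a total error bounded by $\overline N_q(f_0)(2+T_f)(\triangle x+\triangle v)$, which \eqref{smallness2} forces to be less than half of $\int f_{0R}(x-vT_f,v)\,dv$. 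Hence $\rho^n_R(x) \ge \frac12 e^{-C_q T_f}\int f_{0R}(x-vT_f,v)\,dv \ge C_q e^{-C_q T_f}$.

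The temperature lower bound $T^n_R \ge C_q e^{-C_q T_f}$ is handled by the same Duhamel-type inequality $f^{n+1}_R \ge \frac{\kappa}{\kappa+\triangle t}\widetilde f^n_R$: tracing back to the initial datum one gets $d\rho^n_R T^n_R = \int f^n_R |C_2(v) - U^n_R|^2\,dv \ge \big(\frac{\kappa}{\kappa+\triangle t}\big)^n \int f^0_R(\cdot,v)|C_2(v) - U^n_R|^2\,dv$ along the approximate characteristics, and since $f_0$ has positive local density spread over a set of velocities of positive measure (quantified again through $\overline N_q(f_0)$ and the smallness of $\triangle x+\triangle v$), the second moment about \emph{any} center $U^n_R$ — whose size we have already bounded above — stays bounded below; dividing by $\rho^n_R$, which is bounded above, gives the claim. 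The main obstacle is the careful bookkeeping in step two: one must verify that the reconstruction error per time step is genuinely $O((\triangle x + \triangle v)\overline N_q)$ with a constant independent of $n$, and that these errors add rather than compound multiplicatively, so that the total stays linear in $T_f$; this is what makes the specific form of the right-hand side of \eqref{smallness2}, with the factor $(2+T_f)$, the natural threshold. The upper bounds, by contrast, are routine consequences of Lemmas \ref{control_macroscopic values} and \ref{ControllonFnR} once the two lower bounds are secured.
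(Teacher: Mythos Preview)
Your overall architecture is right, and the upper bounds and the density lower bound follow the paper's route. Two points deserve comment.

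\medskip
\textbf{Density lower bound: the error does not accumulate.} You write that ``each reconstruction step introduces an error controlled by $(\triangle x+\triangle v)\overline N_q(f_0)$-type quantities, and summing over at most $N_t$ steps \ldots\ produces a total error bounded by $\overline N_q(f_0)(2+T_f)(\triangle x+\triangle v)$.'' This accounting does not close: $N_t$ errors of size $O(\triangle x+\triangle v)$ sum to $O\big((T_f/\triangle t)(\triangle x+\triangle v)\big)$, which is not bounded by the right-hand side of \eqref{smallness2} and in fact blows up as $\triangle t\to 0$. The paper avoids any step-by-step error by exploiting that $\widetilde f^{n}_R(x_i,v_j)$ is \emph{exactly} $f^n_R(x_i-v_j\triangle t,v_j)$ (the linear interpolant evaluated at the backward foot), so iterating the pointwise inequality $f^{m+1}_R\ge\frac{\kappa}{\kappa+\triangle t}\widetilde f^m_R$ down to $m=0$ yields
\[
\rho^n_R(x)\ \ge\ \Big(\frac{\kappa}{\kappa+\triangle t}\Big)^n\inf_i\int f^0_R\big(x_i-C_2(v)\,n\triangle t,\,v\big)\,dv
\]
with \emph{no} intermediate reconstruction error. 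The only comparison that costs anything is the single one between $\int f^0_R(x_i-C_2(v)T_f,v)\,dv$ and $\int f_{0R}(x-vT_f,v)\,dv$, which is an $O\big((2+T_f)(\triangle x+\triangle v)\overline N_q(f_0)\big)$ approximation of a Riemann sum plus a one-time discretization of the initial datum. That is exactly what \eqref{smallness2} is calibrated to absorb.

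\medskip
\textbf{Temperature lower bound: a much shorter argument.} Your plan of tracing $\int f^n_R|C_2(v)-U^n_R|^2\,dv$ back to the initial datum and arguing via ``spread of $f_0$ over a set of positive measure'' is workable in principle but unnecessary. The paper obtains the bound in one line from the first inequality of Lemma~\ref{control_macroscopic values},
\[
\frac{\rho^n_R}{(T^n_R)^{d/2}}\ \le\ C_qN_0(f^n_R),
\]
which rearranges to $T^n_R\ge\big(\rho^n_R/(C_qN_0(f^n_R))\big)^{2/d}$. The numerator is already bounded below by the density estimate just proved, and the denominator is bounded above by $N_q(f^0_R)e^{C_qT_f}$ via Lemma~\ref{ControllonFnR}; no further iteration or smallness hypothesis is needed. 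This is both shorter and decouples the temperature bound from the characteristics bookkeeping.
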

\begin{proof}
Note that we have from Lemma \ref{ControllonFnR}
\begin{eqnarray*}
\rho^n_R(t)&=&\int f^n_R(x,v)dv\\
&\leq&N_q(f^n_R)\sum_{j}\frac{\triangle v}{(1+|v_{j}|)^q}\\
&\leq&C_qe^{C_qT_f}N_q(f^0_R).
\end{eqnarray*}
To proceed to the estimates for $U^n_R$ and $T^n_R$, we need to establish the lower bound for $\rho^n_R(x,t)$ and $T^n_R(x,t)$ first.
Note that we have from \eqref{refom_scheme}
\begin{align}
\begin{aligned}\label{rho_geq_Ef1}
\rho^n_R(x)
&=\int_{\bbr^d} f^n_R(x,v) dv\cr
&\geq\frac{\kappa}{\kappa+\triangle t}\int \widetilde{f}^{n-1}_R(x,v)dv\cr
&=\frac{\kappa}{\kappa+\triangle t}E_x\Big(\int f^{n-1}_R(x_i-C_2(v)\triangle t,v) dv\Big).
\end{aligned}
\end{align}
In the last line, we used
\begin{eqnarray*}
&&\hspace{-0.5cm}\int \widetilde{f}^{n-1}_R(x,v)dv\cr
&&=\int\sum_{i,j}\Big(\frac{x-x_{i}}{\triangle x}\widetilde f^{n}_{i+1,j}
+\frac{x_{i+1}-x}{\triangle x}\widetilde f^n_{i,j}\Big){{\mathcal A}^R_{i,j}}(x,v)dv\cr
&&=\sum_{i}\Big\{(\frac{x-x_{i}}{\triangle x}\Big(\sum_j\int\widetilde f^{n}_{i+1,j}{{\mathcal X}_{A_{v_j}}}dv\Big)
+\frac{x_{i+1}-x}{\triangle x}\Big(\sum_j\int\widetilde f^n_{i,j}{{\mathcal X}_{A_{v_j}}}(x,v)dv\Big)\Big\}
{{\mathcal A}^R_{x_i}}(x,v)\cr
&&=\sum_{i}\Big\{(\frac{x-x_{i}}{\triangle x}\Big(\sum_j\widetilde f^{n}_{i+1,j}\triangle v\Big)
+\frac{x_{i+1}-x}{\triangle x}\Big(\sum_j\widetilde f^n_{i,j}\triangle v\Big)\Big\}
{{\mathcal A}^R_{x_i}}(x,v)\cr
&&\equiv E_x\Big(\sum_j\widetilde f^{n-1}_{i,j,R}\triangle v\Big)\cr
&&=E_x\Big(\sum_j\Big(\frac{x_i-v_j\triangle t-x_s}{\triangle x}f^{n-1}_{s+1,j,R}+\frac{x_{s+1}-(x_i-v_j\triangle t)}{\triangle x} f^{n-1}_{s,j}\Big)\triangle v\Big)\cr
&&=E_x\Big(\int\sum_j\Big(\frac{x_i-C_2(v)\triangle t-x_s}{\triangle x}f^{n-1}_{s+1,j,R}+\frac{x_{s+1}-(x_i-C_2(v)\triangle t)}{\triangle x} f^{n-1}_{s,j}\Big)
\mathcal{A}^R_{v_j}(v)dv\Big)\cr
&&=E_x\Big(\int f^{n-1}_R(x_i-C_2(v)\triangle t,v) dv\Big).
\end{eqnarray*}
On the other hand, we note from the definition of $E_x$ that
\begin{eqnarray*}
E_x\Big(\int f^{n-1}_R(x_i-C_2(v)\triangle t,v)dv\Big)
\geq \inf_i\int f^n_R(x_i-C_2(v)\triangle t,v,t)dv.
\end{eqnarray*}
This gives from (\ref{rho_geq_Ef1})
\[
\rho^{n}_R(x)\geq \frac{\kappa}{\kappa+\triangle t}\inf_i\int f^n_R(x_i-C_2(v)\triangle t,v,t)dv.
\]
We iterate the above lower bound estimate to obtain
\begin{align}
\begin{aligned}\label{rho_R}
\rho^{n}_R(x)&\geq\Big(\frac{\kappa}{\kappa+\triangle t}\Big)^n\inf_i\int f^0_R(x_i-C_2(v)n\triangle t,v)dv\cr
&=\Big(\frac{ \kappa}{\kappa+\triangle t}\Big)^n\inf_i\int f^0_R(x_i-C_2(v)T_f,v)dv.
\end{aligned}
\end{align}
Then we employ the following elementary inequality
\[
(1-x)^n\geq e^{-8nx}\quad(0<x<1-\varepsilon)~\mbox{ for } 0<\varepsilon<1,
\]
to see
\begin{eqnarray}
\Big(\frac{\kappa}{\kappa+\triangle t}\Big)^n
&=&\Big(1-\frac{\triangle t}{\kappa+\triangle t}\Big)^n\nonumber\\
&\geq&e^{-\frac{8n\triangle t}{\kappa+\triangle t}}\nonumber\\
&\geq&e^{-\frac{8T_f}{\kappa+\triangle t}}.\nonumber
\end{eqnarray}
Here we used the fact that the boundedness asuumption on $\triangle t$ implies
\[
\frac{\triangle t}{\kappa+\triangle t}<1-\min\big\{\frac{1}{2},\kappa\big\}.
\]
Hence (\ref{rho_R}) gives
\begin{eqnarray}
\rho^{n}_R(x)\geq e^{-\frac{8T_f}{\kappa+\triangle t}}\inf_i\int f^0_R(x_i-C_2(v)T_f,v)dv.\label{e-TSn(rho)}
\end{eqnarray}
Now we should replace the estimate of $f^0_{R}$ with the estimate of $f_{0R}$.
We first observe that the difference between $f^0_R$ and $f_{0R}$ can be estimated as follows:
\begin{eqnarray*}
&&f_{0R}(x_i-v_j T_f,v_j)\cr
&&\hspace{1cm}\leq \frac{x_i-v_jT_f-x_s}{\triangle x}f_{0R}(x_{s+1},v_j)
+\frac{x_{s+1}-(x_i-v_jT_f)}{\triangle x}f_{0R}(x_s,v_j)+\Big|\frac{\partial f_0}{\partial x}(x_{\theta},v_j)\Big|\triangle x\cr
&&\hspace{1cm}= \frac{x_i-v_jT_f-x_s}{\triangle x}f^0_{s+1,j,R}
+\frac{x_{s+1}-(x_i-v_jT_f)}{\triangle x}f^0_{s,j,R}+\Big|\frac{\partial f_0}{\partial x}(x_{\theta},v_j)\Big|\triangle x\cr
&&\hspace{1cm}=f^0_R(x_i-v_jT_f,v_j)+\Big|\frac{\partial f_0}{\partial x}(x_{\theta},v_j)\Big|\triangle x\cr
&&\hspace{1cm}\leq f^0_R(x_i-v_jT_f,v_j)+N_q(f_0)\frac{\triangle x}{(1+|v_{j-\frac{1}{2}}|)^q},
\end{eqnarray*}
where $x_{\theta}\in [x_s, ~x_{s+1})$ and $s=s(i,j)$ denotes the index of spatial mesh grid such that $x_i-v_jT_f\in [x_s, ~x_{s+1})$.
We then sum over $i$ to obtain
\begin{align}
\begin{aligned}\label{ApproximateAgain}
&\sum_j f_{0R}(x_i-v_j T_f,v_j)\triangle v\cr
&\hspace{0.5cm}\leq\sum_jf^0_R(x_i-v_jT_f,v_j)\triangle v+\sum_jN_q(f_0)\frac{\triangle x\triangle v}{(1+|v_{j-\frac{1}{2}}|)^q}\cr
&\hspace{0.5cm}=\int f^0_R(x_i-C(v)T_f,v)dv+N_q(f_0)\triangle x.
\end{aligned}
\end{align}
On the other hand, application of Taylor's theorem on each interval $[v_{j-\frac{1}{2}},v_{j+\frac{1}{2}})$ gives
\begin{align}
\begin{aligned}\label{RiemannSum}
&\Big|\int f_{0R}(x_i-vT_f,v)dv- \sum_j f_{0R}(x_i-v_j T_f,v_j)\triangle v~\Big|\cr
&\hspace{2cm}\leq (1+T_f)(\triangle x+\triangle v)\overline{N}^{1}_q(f_{0R})\cr
&\hspace{2cm}\leq (1+T_f)(\triangle x+\triangle v)\overline{N}_q(f_{0}).
\end{aligned}
\end{align}
Substituting (\ref{ApproximateAgain}) and (\ref{RiemannSum}) into (\ref{e-TSn(rho)}), we obtain
\begin{eqnarray*}
\rho^{n}_R(x)&\geq&e^{-\frac{8T_f}{\kappa+\triangle t}}\inf_i\int f^0_R(x_i-C_2(v)T_f,v)dv\cr
&\geq& e^{-\frac{8T_f}{\kappa+\triangle t}}\inf_x\Big(\int f_{0R}(x_i-vT_f,v)dv-C\overline{N}_q(f_0)(2+T_f)(\triangle x+\triangle v)\Big).\cr
&\geq&\frac{1}{2}e^{-\frac{8T_f}{\kappa+\triangle t}}\inf_x\int f_{0R}(x_i-vT_f,v)dv.
\end{eqnarray*}
In the last line, we used the smallness assumption (\ref{smallness2}) to see
\[
\overline{N}_q(f_0)(2+T_f)(\triangle x+\triangle v)<\frac{1}{2}\int f_{0R}(x-vT_f,v)dv.
\]
Hence we have from (\ref{LowerBoundforLocalDensity})
\begin{eqnarray*}
\rho^{n}_R(x)\geq e^{-C_qT_f}C_{T_f,f_0}.
\end{eqnarray*}
We now turn to the proof of the lower bound of $T^n_R$. Note from the estimate \eqref{PS1} of Lemma \ref{control_macroscopic values}, we have
\begin{eqnarray*}
T^n_R&\geq&\Big(C_q\frac{N_q(f^n_R)}{\rho^n_R}\Big)^{\frac{2}{d}}\cr
&\geq&\Big(C_q\frac{N_q(f^0_R)}{e^{C_qT}}\Big)^{\frac{2}{d}}\cr
&=&C_q\big(N_q(f^0_R)\big)^{\frac{2}{d}}e^{-C_qT_f}.
\end{eqnarray*}
Now the pointwise upper bound estimate of $U$ and $T$ follows directly from the following observations:
\begin{eqnarray*}
U^n_R(x)&\leq&\frac{\int f^n_R(x,v)dv}{\rho^n_R(x)},\cr
T^n_R(x)&\leq&\frac{\int f^n_R(x,v)|v-U^n_R(x)|^2dv}{\rho^n_R(x)}.
\end{eqnarray*}
\end{proof}
The following continuity property of local Maxwellians is from \cite{P-P}.
%
%
\begin{lemma}\label{ContinuityofMaxwellian}\cite{P-P} Let $f$ and $g$ be solutions of (\ref{main.1})
such that $N_p(f)<\infty$ and $N_p(g)<\infty$.
Suppose $f$ and $g$ satisfy estimates \eqref{ULBoundsMacroscopicContinuous}. Then we have
\begin{equation}
\|\mathcal{M}(f)-\mathcal{M}(g)\|_{L^1_2}\leq C_{T_f}\|f - g\|_{L^1_2}.\label{Stability1}
\end{equation}
\end{lemma}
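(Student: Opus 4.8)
The plan is to exploit that $\mathcal{M}(f)$ depends on $f$ only through the three macroscopic fields $\rho_f$, $U_f$, $T_f$, and therefore to split the estimate into two independent parts: a \emph{continuity of the Maxwellian with respect to its parameters} estimate, followed by a \emph{continuity of the macroscopic fields with respect to $f$} estimate. The hypotheses $N_p(f),N_p(g)<\infty$ guarantee that all the moments appearing below are finite, so that these manipulations are legitimate, and \eqref{ULBoundsMacroscopicContinuous} supplies the uniform upper bounds on $\rho,|U|,T$ and the strictly positive lower bounds on $\rho$ and $T$ that do the real work.

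First I would fix $x$ and compare the two Maxwellians pointwise using the mean value theorem applied to the smooth map $(\rho,U,T)\mapsto\mathcal{M}_{\rho,U,T}(v)$ along the segment joining $(\rho_f,U_f,T_f)(x)$ to $(\rho_g,U_g,T_g)(x)$, obtaining
\[
\mathcal{M}(f)-\mathcal{M}(g)=(\rho_f-\rho_g)\,\partial_\rho\mathcal{M}_\xi+(U_f-U_g)\cdot\partial_U\mathcal{M}_\xi+(T_f-T_g)\,\partial_T\mathcal{M}_\xi,
\]
where $\mathcal{M}_\xi$ is a Maxwellian whose parameters are a convex combination of those of $f$ and $g$, and hence still obey the bounds of \eqref{ULBoundsMacroscopicContinuous}. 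Since $\partial_\rho\mathcal{M}$, $\partial_U\mathcal{M}$, $\partial_T\mathcal{M}$ are the Maxwellian times polynomials in $(v-U)/\sqrt{T}$, multiplying by the weight $(1+|v|)^2$ and integrating in $v$ yields bounded Gaussian moments, uniformly in $x$. This reduces the claim to
\[
\|\mathcal{M}(f)-\mathcal{M}(g)\|_{L^1_2}\le C_{T_f}\int_{\bbt^d}\big(|\rho_f-\rho_g|+|U_f-U_g|+|T_f-T_g|\big)\,dx.
\]

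Next I would estimate the right-hand side pointwise in $x$. The density difference is immediate, $|\rho_f-\rho_g|(x)\le\int|f-g|(x,v)\,dv$. For the bulk velocity I would write $U_f-U_g=\rho_f^{-1}\big[(\rho_fU_f-\rho_gU_g)-U_g(\rho_f-\rho_g)\big]$ and use the lower bound on $\rho_f$ together with the $L^\infty$ bound on $|U_g|$, so that $|U_f-U_g|(x)\le C_{T_f}\int|f-g|(x,v)(1+|v|)\,dv$. For the temperature I would use $d\rho T=\int f|v|^2\,dv-\rho|U|^2$, whence $d(\rho_fT_f-\rho_gT_g)=\int(f-g)|v|^2\,dv-(\rho_f|U_f|^2-\rho_g|U_g|^2)$; the last difference is absorbed by the bounds just obtained on $\rho_f-\rho_g$ and $U_f-U_g$ and the $L^\infty$ bounds on $\rho,|U|$, and dividing once more by $\rho_f$ via its lower bound gives $|T_f-T_g|(x)\le C_{T_f}\int|f-g|(x,v)(1+|v|)^2\,dv$. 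Integrating these three estimates in $x$ and summing bounds everything by $\|f-g\|_{L^1_2}$, which together with the previous step completes the argument.

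The main obstacle, and essentially the only place where the hypotheses are genuinely used, is making the Gaussian moment bounds in the first step uniform in $x$: one must check that the strictly positive lower bound on the temperature, as well as the boundedness of $\rho$ and of $|U|$, is inherited by the intermediate Maxwellian $\mathcal{M}_\xi$ — which holds precisely because its parameters lie on the segment joining the two triples of macroscopic fields — and that the polynomial weight $(1+|v|)^2$ can be absorbed into the Gaussian at the cost of a constant depending only on the bounds in \eqref{ULBoundsMacroscopicContinuous}. Everything else is routine moment bookkeeping, hinging on the lower bound for $\rho_f$ to pass from differences of the products $\rho U,\rho T$ to differences of $U,T$ themselves.
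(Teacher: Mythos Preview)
Your proposal is correct and follows essentially the same strategy as the paper: a mean-value expansion of $\mathcal{M}$ in the parameters $(\rho,U,T)$, Gaussian-moment bounds on the partial derivatives using the uniform upper bounds on $\rho,|U|,T$ and the lower bound on $T$, and then the same algebraic decompositions (multiplying and dividing by $\rho_f$, using the lower bound on $\rho_f$ and the $L^\infty$ bound on $U_g$) to control $|U_f-U_g|$ and $|T_f-T_g|$ by weighted integrals of $|f-g|$. Your remark that the intermediate parameters, being a convex combination of the two triples, inherit the bounds of \eqref{ULBoundsMacroscopicContinuous} is exactly the point that makes the Gaussian moments uniform in $x$, and is in fact stated more carefully than in the paper's somewhat ambiguous notation $\theta f+(1-\theta)g$.
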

The proof of this lemma is given in \cite{P-P}, we present here the detailed
proof for the reader's convenience.
\begin{proof}
We observe from Taylor's theorem
\begin{eqnarray*}
{\mathcal M}(f)-{\mathcal M}(g)&=&{\mathcal M}(\rho_f,U_f,T_f)-{\mathcal M}(\rho_g,U_g,T_g)\\
&=&(\rho_{f-g},U_{f-g},T_{f-g})\cdot\nabla {\mathcal M}(\rho,U,T)_{\theta f+(1-\theta)g}\\
&=&\rho_{f-g}\frac{\partial {\mathcal M}(\theta)}{\partial \rho}
+U_{f-g}\cdot\frac{\partial {\mathcal M}(\theta)}{\partial U_i}
+T_{f-g}\frac{\partial {\mathcal M}(\theta)}{\partial T},
\end{eqnarray*}
where
\[
\frac{\partial {\mathcal M}(\theta)}{\partial X}\equiv \frac{\partial {\mathcal M}}{\partial X}\Big|_{\theta f+(1-\theta)g}.
\]
Multiplying $(1+|v|^2)$ to both sides and integrating with respect to $(x,v)$, we get
\begin{align}
\begin{aligned}\label{LipMac1}
&\int_{\bbr^d\times\bbt^d}|{\mathcal M}(f)-{\mathcal M}(g)|(1+|v|^2)~ dv dx\cr
&\hspace{2cm}=\int_{\bbt^d}\rho_{f-g}\int_{\bbr^d}\Big|\frac{\partial {\mathcal M}(\theta)}{\partial \rho}\Big|(1+|v|^2)dv dx\cr
&\hspace{2cm}+\int_{\bbt^d}U_{f-g}\cdot\int_{\bbr^d}\Big|\frac{\partial {\mathcal M}(\theta)}{\partial U_i}\Big|(1+|v|^2)dv dx\cr
&\hspace{2cm}+\int_{\bbt^d}T_{f-g}\int_{\bbr^d}\Big|\frac{\partial {\mathcal M}(\theta)}{\partial T}\Big|(1+|v|^2)dv dx.\cr
\end{aligned}
\end{align}
We then substitute the following estimates into (\ref{LipMac1})
\begin{eqnarray*}
\frac{\partial {\mathcal M}}{\partial \rho}&=&\frac{1}{\sqrt{(2\pi T)^N}}e^{\frac{-|v-U|^2}{2T}},\\
\frac{\partial {\mathcal M}}{\partial U_i}&=&\frac{\rho}{\sqrt{(2\pi T)^N}}\big(\frac{v_i-U_i}{T}\big)e^{\frac{-|v-U|^2}{2T}},\\
\frac{\partial {\mathcal M}}{\partial T}&=&-\frac{d}{2}\frac{1}{T}\frac{\rho}{\sqrt{(2\pi T)^{d}}}e^{\frac{-|v-U|^2}{2T}}
+\frac{\rho}{\sqrt{(2\pi T)^d}}\frac{|v-U|^2}{2T^2}e^{\frac{-|v-U|^2}{2T}}
\end{eqnarray*}
to obtain
\begin{eqnarray*}
\int_{\bbr^3}\Big|\frac{\partial {\mathcal M}(\theta)}{\partial \rho}\Big|(1+|v|^2)dv
&=& \frac{1}{\rho_{\theta}}\int_{\bbr^d}\frac{\rho_{\theta}}{\sqrt{(2\pi T_{\theta})^N}}e^{\frac{-|v-U_{\theta}|^2}{2T_{\theta}}}(1+|v|^2)dv\\
&=&\frac{1}{\rho_{\theta}}\int_{\bbr^d}\frac{\rho_{\theta}}{\sqrt{(2\pi T_{\theta})^N}}e^{\frac{-|v-U_{\theta}|^2}{2T_{\theta}}}dv\\
&&+\frac{1}{\rho_{\theta}}\int_{\bbr^d}\frac{\rho_{\theta}}{\sqrt{(2\pi T_{\theta})^N}}e^{\frac{-|v-U_{\theta}|^2}{2T_{\theta}}}|v|^2dv\\
&=&\frac{1}{\rho_{\theta}}\rho_{\theta}+\frac{1}{\rho_{\theta}}(\rho_{\theta}|U_{\theta}|^2+d\rho_{\theta}T_{\theta})\\
&=&1+|U_{\theta}|^2+dT_{\theta}.
\end{eqnarray*}
In a similar manner, we can estimate the remaining terms of (\ref{LipMac1}) as follows:
\begin{eqnarray*}
\int_{\bbr^d}\Big|\frac{\partial {\mathcal M}(\theta)}{\partial U}\Big|(1+|v|^2)dv&\leq&
\frac{\rho_{\theta}}{\sqrt{T_{\theta}}}(1+|U_{\theta}|^2+T_{\theta}),\\
\int_{\bbr^d}\Big|\frac{\partial {\mathcal M}(\theta)}{\partial T}\Big|(1+|v|^2)dv&\leq& \frac{\rho_{\theta}}{T_{\theta}}(1+|U_{\theta}|^2+T_{\theta}).
\end{eqnarray*}
Substituting these estimates into (\ref{LipMac1}), we get
\begin{align}
\begin{aligned}\label{LipMac2}
&\int_{\bbr^d}|{\mathcal M}(f)-{\mathcal M}(g)|(1+|v|^2)~ dv\cr
&\hspace{1cm}= (1+|U_{\theta}|^2+T_{\theta})\rho_{f-g}
+ \frac{\rho_{\theta}}{\sqrt{T_{\theta}}}(1+|U_{\theta}|^2+T_{\theta})|U_{f-g}|\cr
&\hspace{1cm}+\frac{\rho_{\theta}}{T_{\theta}}(1+|U_{\theta}|^2+T_{\theta})T_{f-g}.
\end{aligned}
\end{align}
We then integrate over $\bbt_x$ and employ Lemma \ref{control_macroscopic values} to have
\begin{eqnarray}
&&\|{\mathcal M}(f)-{\mathcal M}(g)\|_{L^1_2}\leq C\int_{\bbt^3}(\rho_{f-g}+|U_{f-g}|+T_{f-g})dx,\label{LipMac3}
\end{eqnarray}
where $C=C(N_q(f_0),e^{a t})$.
We estimate these terms separately.
Note first that we have
\begin{equation}
\int_{\bbr^d\times\bbt^d}\rho_{f-g}dx\leq\int_{\bbr^3\times\bbt^3}|f-g|dv dx\leq \|f-g\|_{L^1_2}.\label{rho_fg}
\end{equation}
On the other hand, we observe that
\begin{eqnarray*}
\int_{\bbt^d}|U_{f-g}|dx
&=&C(t)\int_{\bbt^3}|U_f-U_g||\rho_f|dx\quad(\because~\rho>Ce^{-Ct})\\
&=&C_T\int_{\bbt^d}|U_f\rho_f-U_g\rho_f|dx\\
&=&C_T\int_{\bbt^d}|U_f\rho_f-U_g\rho_g+U_g\rho_g-U_g\rho_f|dx\\
&=&C_T\int_{\bbt^d}|U_f\rho_f-U_g\rho_g|dx+C_T\int_{\bbt^d}|U_g||\rho_g-\rho_f|dx.
\end{eqnarray*}
The first term can be estimated as follows
\begin{eqnarray*}
\int_{\bbt^d}|U_f\rho_f-U_g\rho_g|dx&=&\int_{\bbt^d}\Big|\int_{\bbr^3}fv dv-\int_{\bbr^d}g v dv\Big| dx\\
&\leq&\int_{\bbt^d}\int_{\bbr^d}|f-g| |v| dv dx\\
&\leq& \|f-g\|_{L^1_2}.
\end{eqnarray*}
Similarly, we have
\[
\int_{\bbt^d}|U_g||\rho_g-\rho_f|dx\leq C(t)\int_{\bbr^d\times\bbt^d}|f-g|(1+|v|^2)dv dx.
\]
Combining these estimates, we get
\begin{equation}
\int_{\bbt^d}|U_{f-g}|dx\leq C_T\|f-g\|_{L^1_2}.\label{U_fg}
\end{equation}
Finally, we consider
\begin{eqnarray*}
\int_{\bbt^d}|T_{f-g}|dx
&=&C(t)\int_{\bbt^d}|T_f-T_g||\rho_f |dx\quad(\because~\rho>Ce^{-t})\\
&=&\int_{\bbt^d}|T_f\rho_f-T_g\rho_f|dx\\
&=&\int_{\bbt^d}|T_f\rho_f-T_g\rho_g+T_g\rho_g-T_g\rho_f|dx\\
&=&\int_{\bbt^d}|T_f\rho_f-T_g\rho_g|dx+\int_{\bbt^d}|T_g||\rho_g-\rho_f|dx.
\end{eqnarray*}
The first term can be estimated as follows
\begin{eqnarray*}
\int_{\bbt^3}|T_f\rho_f-T_g\rho_g|dx&=&\frac{1}{3}\int_{\bbt^d}\Big|\int_{\bbr^d}f|v|^2 dv-\int_{\bbr^d}g |v|^2 dv\Big| dx\\
&\leq&\frac{1}{3}\int_{\bbr^d\times\bbt^d}|f-g| |v| dv dx\\
&\leq& C\int_{\bbr^d\times\bbt^d}|f-g|(1+|v|^2)dv dx.
\end{eqnarray*}
Similarly, we have
\[
\int_{\bbt^d}|T_g||\rho_g-\rho_f|dx\leq C(t)\int_{\bbr^d\times\bbt^d}|f-g|(1+|v|^2)dv dx.
\]
The combination of these two estimates yields
\begin{eqnarray}
\int_{\bbt^d}|T_{f-g}|dx \leq C_T\|f-g\|_{L^1_2}.\label{T_fg}
\end{eqnarray}
We now substitute \eqref{rho_fg}, \eqref{U_fg}, \eqref{T_fg} into \eqref{LipMac3} to obtain
\[
\|{\mathcal M}(f)-{\mathcal M}(g)\|_{L^1_2}\leq C_T\|f_0-g_0\|_{L^1_2}.
\]
\end{proof}
The above continuity property also holds for discrete distribution functions.
%
%
\begin{lemma}\label{ContinuityofMaxwellianDiscrete}Let $f^n_R$ and $g^n_R$ numerical solutions
defined recursively by \ref{refom_scheme} corresponding to initial discretization $f^0_R$ and $g^0_R$ respectively.
Assume that $N_q(f^0_R)<\infty$
and $N_q(g^0_R)<\infty$ with $q>4$. Then we have
\begin{equation}
\|\mathcal{M}(f^n_R)-\mathcal{M}(g^n_R)\|_{L^1_2}\leq C_{T_f}\|f^n_R - g^n_R\|_{L^1_2}.\label{Stability1}
\end{equation}
\end{lemma}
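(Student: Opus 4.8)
The plan is to follow the proof of Lemma~\ref{ContinuityofMaxwellian} essentially line by line, the only changes being that the continuous macroscopic bounds \eqref{ULBoundsMacroscopicContinuous} are replaced by their discrete counterparts from Lemma~\ref{ULBoundsMacroscopicDiscreteLemma}, and that the velocity variable $v$ is replaced everywhere by the piecewise constant version $C_2(v)$ dictated by \eqref{Conservatives_Reform} and \eqref{reform_M}. Since $N_q(f^0_R),N_q(g^0_R)<\infty$, Lemma~\ref{ControllonFnR} guarantees $N_q(f^n_R),N_q(g^n_R)<\infty$ uniformly in $n$, so Lemma~\ref{ULBoundsMacroscopicDiscreteLemma} applies to both numerical solutions. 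Write $(\rho_i,U_i,T_i)$, $i=1,2$, for the discrete macroscopic fields of $f^n_R$ and $g^n_R$, and $(\rho_\theta,U_\theta,T_\theta)$ for those of the convex combination $h_\theta:=\theta f^n_R+(1-\theta)g^n_R$, $\theta\in[0,1]$. First I would apply Taylor's theorem in the variables $(\rho,U,T)$ to obtain
\[
\mathcal{M}^n(f^n_R)-\mathcal{M}^n(g^n_R)=(\rho_1-\rho_2)\frac{\partial\mathcal{M}^n(\theta)}{\partial\rho}+(U_1-U_2)\cdot\frac{\partial\mathcal{M}^n(\theta)}{\partial U}+(T_1-T_2)\frac{\partial\mathcal{M}^n(\theta)}{\partial T},
\]
where the partial derivatives of \eqref{reform_M} are evaluated at $h_\theta$.

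Multiplying by $(1+|C_2(v)|^2)$ and integrating in $v$, the Gaussian moment computations are identical to those in the proof of Lemma~\ref{ContinuityofMaxwellian} — the function \eqref{reform_M} is a genuine Gaussian in the variable $C_2(v)$, so its moments in $C_2(v)$ are computed exactly — and yield
\[
\int\big|\mathcal{M}^n(f^n_R)-\mathcal{M}^n(g^n_R)\big|(1+|C_2(v)|^2)\,dv\le C_\theta\big(|\rho_1-\rho_2|+|U_1-U_2|+|T_1-T_2|\big),
\]
where $C_\theta$ is a combination of $1+|U_\theta|^2+dT_\theta$, $\rho_\theta T_\theta^{-1/2}(1+|U_\theta|^2+T_\theta)$ and $\rho_\theta T_\theta^{-1}(1+|U_\theta|^2+T_\theta)$. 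The step I expect to be the only one not already contained in Lemma~\ref{ContinuityofMaxwellian} is bounding $C_\theta$ by $C_{T_f}$: the mixture $h_\theta$ is \emph{not} produced by the scheme \eqref{refom_scheme}, so Lemma~\ref{ULBoundsMacroscopicDiscreteLemma} cannot be invoked for it directly. This is handled by the elementary mixture inequalities
\[
\min\{\rho_1,\rho_2\}\le\rho_\theta\le\max\{\rho_1,\rho_2\},\qquad|U_\theta|\le\max\{|U_1|,|U_2|\},
\]
\[
\frac{\min\{\rho_1T_1,\rho_2T_2\}}{d\,\rho_\theta}\le T_\theta\le\frac{1}{d\,\rho_\theta}\int h_\theta|C_2(v)|^2\,dv=\frac{\theta(\rho_1|U_1|^2+d\rho_1T_1)+(1-\theta)(\rho_2|U_2|^2+d\rho_2T_2)}{d\,\rho_\theta},
\]
where the lower bound for $T_\theta$ uses $d\rho_\theta T_\theta=\min_U\int h_\theta|C_2(v)-U|^2\,dv\ge\theta\,d\rho_1T_1+(1-\theta)\,d\rho_2T_2$. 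These transfer the two-sided bounds of Lemma~\ref{ULBoundsMacroscopicDiscreteLemma} for $f^n_R$ and $g^n_R$ (including the positive lower bounds on $\rho^n_R$ and $T^n_R$) to $h_\theta$, giving $C_\theta\le C_{T_f}$.

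It then remains to show $\int_{\bbt^d}\big(|\rho_1-\rho_2|+|U_1-U_2|+|T_1-T_2|\big)\,dx\le C_{T_f}\|f^n_R-g^n_R\|_{L^1_2}$, which proceeds exactly as in the last part of the proof of Lemma~\ref{ContinuityofMaxwellian}: $|\rho_1-\rho_2|\le\int|f^n_R-g^n_R|\,dv$ directly, while for $U_1-U_2$ and $T_1-T_2$ one uses the uniform lower bound $\rho^n_R\ge C_{T_f}>0$ of Lemma~\ref{ULBoundsMacroscopicDiscreteLemma}, expresses the differences of $\rho U$ and of $\rho(dT+|U|^2)$ as moments of $f^n_R-g^n_R$ against $C_2(v)$ and $|C_2(v)|^2$, and inserts the mixed terms $U_2(\rho_2-\rho_1)$, $T_2(\rho_2-\rho_1)$ to absorb the mismatch between $\rho_1$ and $\rho_2$, invoking the upper bounds on $U^n_R$ and $T^n_R$. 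Since $|C_2(v)|\le 1+|v|$ for $\triangle v\le 1$, each resulting integral is controlled by $\|f^n_R-g^n_R\|_{L^1_2}$; summing the three contributions and integrating over $\bbt^d$ completes the proof. Apart from the mixture bounds of the previous paragraph, the whole argument is a transcription of Lemma~\ref{ContinuityofMaxwellian}; the only care needed is to keep $C_2(v)$ in place of $v$ in every moment and to use that \eqref{reform_M} is an exact Gaussian in that variable.
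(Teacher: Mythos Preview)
Your approach is exactly the paper's: the paper's proof is the single sentence ``Combination of Lemma~\ref{ControllonFnR}, Lemma~\ref{ULBoundsMacroscopicDiscreteLemma} and Lemma~\ref{ContinuityofMaxwellian} gives the desired result,'' and what you have written is a careful unpacking of precisely that combination. In fact you go further than the paper by isolating and resolving the mixture issue---that $h_\theta=\theta f^n_R+(1-\theta)g^n_R$ is not itself produced by the scheme, so its macroscopic bounds must be deduced from those of $f^n_R$ and $g^n_R$ via convexity---a point the paper leaves implicit both here and in the proof of Lemma~\ref{ContinuityofMaxwellian}.
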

\begin{proof}
Combination of Lemma \ref{ControllonFnR}, Lemma \ref{ULBoundsMacroscopicDiscreteLemma} and Lemma \ref{ContinuityofMaxwellian}
gives the desired result.
\end{proof}
%
%
%
%
\section{{\bf Consistent form}}
In this section, we transform the BGK equation (\ref{main.1}) to derive a consistent form which is compatible with
the reformulated scheme (\ref{refom_scheme}).
To be consistent with the notation for discrete problems, we employ the following notation for the smooth
distrbution function:
\[
\widetilde f(x,v,t)=f(x-v\triangle t,v,t).
\]
\begin{theorem}
Let $f$ be a smooth solution of \eqref{main.1}, then we have
\begin{align}
\begin{aligned}\label{Consist3}
f(x,v,t+\triangle t)&=\frac{\kappa}{\kappa+\triangle t}\widetilde{f}(x,v,t)
+\frac{\triangle t}{\kappa+\triangle t}{\mathcal M}(\widetilde{f})(x,v,s)\cr
&+\frac{\triangle t}{\kappa+\triangle t}\mathcal{R}_1+\frac{1}{\kappa+\triangle t}\mathcal{R}_2.
\end{aligned}
\end{align}
where
\begin{eqnarray*}
\mathcal{R}_1&=&{\mathcal M}(\widetilde{f})(x,v,s)-\widetilde{\mathcal M}(f)(x,v,s),\nonumber\\
\mathcal{R}_2&=&~R_{\mathcal{M}}-R_{f},
\end{eqnarray*}
and
\begin{eqnarray*}
R_{\mathcal{M}}&=&\int^{t+\triangle t}_t(s-t)\big(\frac{d{\mathcal M}}{dt}
+v\cdot\nabla{\mathcal M}\big)(x_{\theta_1},v,t_{\theta_1})\cr
R_{f}&=&\int^{t+\triangle t}_t(s-t-\triangle t){\mathcal M}(f)(x_{\theta_2},v,t_{\theta_2})ds.
\end{eqnarray*}
\end{theorem}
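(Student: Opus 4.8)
The plan is to integrate the characteristic form of the BGK equation over one time step and then perform a careful Taylor expansion of the resulting integral terms, collecting the pieces into a scheme-compatible "main part" plus two remainders $\mathcal{R}_1,\mathcal{R}_2$. First I would rewrite \eqref{main.1} along characteristics: setting $g(s)=f(x-v(t+\triangle t-s),v,s)$ on $[t,t+\triangle t]$, the equation becomes $\frac{dg}{ds}=\frac{1}{\kappa}\big(\mathcal{M}(f)-f\big)(x-v(t+\triangle t-s),v,s)$, so that
\begin{equation*}
f(x,v,t+\triangle t)=\widetilde f(x,v,t)+\frac{1}{\kappa}\int_t^{t+\triangle t}\big(\mathcal{M}(f)-f\big)(x-v(t+\triangle t-s),v,s)\,ds,
\end{equation*}
after shifting so that the left endpoint corresponds to $\widetilde f(x,v,t)=f(x-v\triangle t,v,t)$. (One must be slightly careful about which shifted variable, $x-v\triangle t$ versus $x-v(t+\triangle t-s)$, appears; the cleanest route is to keep everything based at $x-v\triangle t$, which is exactly the $\widetilde{(\cdot)}$ notation introduced just before the statement.)

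Next I would expand the integrand. Write $\Phi(s)=\big(\mathcal{M}(f)-f\big)$ evaluated along the characteristic through $(x,v)$ at time $t+\triangle t$. The idea is to Taylor-expand $\Phi$ about an appropriate endpoint so that the leading term reproduces $\frac{\triangle t}{\kappa}\big(\mathcal{M}(\widetilde f)(x,v,s)-\widetilde f(x,v,t)\big)$-type contributions and the next term is exactly a first-order remainder of the form $\int_t^{t+\triangle t}(s-t)\big(\tfrac{d\mathcal M}{dt}+v\cdot\nabla\mathcal M\big)(x_{\theta_1},v,t_{\theta_1})\,ds$ for $\mathcal{M}$ and an analogous $\int_t^{t+\triangle t}(s-t-\triangle t)\,\mathcal M(f)(x_{\theta_2},v,t_{\theta_2})\,ds$ for the $f$-part; these are precisely $R_{\mathcal M}$ and $R_f$, so $\mathcal{R}_2=R_{\mathcal M}-R_f$. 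The discrepancy between evaluating the Maxwellian of the shifted solution, $\mathcal{M}(\widetilde f)$, and the shift of the Maxwellian, $\widetilde{\mathcal M}(f)$, is isolated as $\mathcal{R}_1={\mathcal M}(\widetilde f)-\widetilde{\mathcal M}(f)$. Finally, regrouping: the implicit-in-$f$ contribution $-\frac{1}{\kappa}\int_t^{t+\triangle t}f\,ds$ is treated by writing $f$ along the characteristic, producing the factor $\frac{\triangle t}{\kappa}f(x,v,t+\triangle t)$ at leading order, which when moved to the left side yields the characteristic $\frac{\kappa}{\kappa+\triangle t}$ and $\frac{\triangle t}{\kappa+\triangle t}$ weights appearing in \eqref{Consist3}, with the leftover integral error absorbed into $R_f$.

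The main obstacle is bookkeeping: one has to choose the Taylor expansion points and the base point of the spatial shift so that the four surviving pieces land exactly in the stated form, with the stated weights $\frac{\triangle t}{\kappa+\triangle t}$ and $\frac{1}{\kappa+\triangle t}$ on $\mathcal{R}_1$ and $\mathcal{R}_2$ respectively. In particular, the $\frac{1}{\kappa+\triangle t}$ (rather than $\frac{\triangle t}{\kappa+\triangle t}$) in front of $\mathcal{R}_2$ signals that $\mathcal{R}_2$ already contains one factor of $\triangle t$ hidden in the length of the integration interval, so the expansions must be organized to make that explicit. I would also note that no estimates are needed here — this is a purely algebraic/analytic identity for smooth $f$, with all integrals well-defined by Theorem \ref{Existence_Theorem}; the smallness hypotheses and the bounds on $\mathcal{R}_1,\mathcal{R}_2$ are deferred to the later error-estimate sections. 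Once the expansion points are fixed, verifying \eqref{Consist3} is a direct computation of matching like terms.
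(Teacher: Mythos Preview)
Your proposal is correct and follows essentially the same route as the paper: integrate the characteristic form over $[t,t+\triangle t]$, Taylor-expand the $\mathcal{M}$-integrand about the left endpoint $(x-v\triangle t,v,t)$ and the $f$-integrand about the right endpoint $(x,v,t+\triangle t)$, move the resulting $\frac{\triangle t}{\kappa}f(x,v,t+\triangle t)$ to the left to generate the $\frac{\kappa}{\kappa+\triangle t}$ and $\frac{\triangle t}{\kappa+\triangle t}$ weights, and then add and subtract $\mathcal{M}(\widetilde f)$ to isolate $\mathcal{R}_1$. Your identification of the asymmetric choice of expansion points as the key bookkeeping decision is exactly right and matches the paper's argument.
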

\begin{proof}
Along the characteristic line, we have from \eqref{main.1}
\begin{equation*}
\frac{\partial f}{\partial t}(x+vt,v,t) = \frac{1}{\kappa}({\mathcal M}f-f)(x+vt,v,t).
\end{equation*}
We integrate in time from $t$ to $t+\triangle t$ to obtain
\begin{equation*}
f(x+(t+\triangle t)v,v,t+\triangle t)=f(x+tv,v,t)+\frac{1}{\kappa}\int^{t+\triangle t}_{t}({\mathcal M}f-f)(x+vs,v,s)ds,
\end{equation*}
or, equivalently,
\begin{eqnarray}
&&f(x,v,t+\triangle t)\label{Consist1}\\
&&\hspace{0.8cm}=f(x-\triangle tv,v,t)+\frac{1}{\kappa}
\int^{t+\triangle t}_{t}(~{\mathcal M}f-f~)(x-(t+\triangle t-s)v,v,s)ds\nonumber\cr
&&\hspace{0.8cm}\equiv f(x-\triangle tv,v,t)+ \frac{1}{\kappa}(~I_{\mathcal{M}}+I_{f}~)\nonumber.
\end{eqnarray}
Application of Taylor's theorem around $(x-\triangle t v,v,t)$ gives
\begin{eqnarray*}
{\mathcal M}(x-(t+\triangle t-s)v,v,s)&=&{\mathcal M}(x-\triangle tv,v,t)\\
&+&(s-t)v\cdot\nabla_x({\mathcal M}(x_{\theta_1},v,t_{\theta_1}))\\
&+&(s-t)\frac{d\mathcal {M}}{dt}(x_{\theta_1},v,t_{\theta_1}).
\end{eqnarray*}
where $x_{\theta_1}$ lies between $x-\triangle t v$ and $x-(t+\triangle t-s)$ and
$t_{\theta_1}$ $\in$ $[s,t]$.
Hence we have
\begin{equation}
I_{\mathcal{M}}=\triangle t{\mathcal M}(x-\triangle tv,v,t)+R_{\mathcal{M}}.\label{IM}
\end{equation}
where
\begin{eqnarray*}
R_{\mathcal{M}}&=&\int^{t+\triangle t}_t(s-t)v\cdot\nabla_x({\mathcal M}(x_{\theta_1},v,t_{\theta_1}))\\
               &+&\int^{t+\triangle t}_t(s-t)\frac{d\mathcal{ M}}{dt}(x_{\theta_1},v,t_{\theta_1}).\\
\end{eqnarray*}
On the other hand, we employ Taylor's theorem around $(x,v,t+\triangle t)$ to get
\begin{eqnarray*}
f(x-(t+\triangle t-s)v,v,s)&=&f(x,v,t+\triangle t)\\
&+&(s-t-\triangle t)v\cdot\nabla_x(f(x_{\theta_2},v,t_{\theta_2}))\\
&+&(s-t-\triangle t)\frac{df}{dt}(x_{\theta_2},v,t_{\theta_2}),
\end{eqnarray*}
where $x_{\theta_2}$ lies between $x$ and $x-(t+\triangle t-s)$ and
$t_{\theta_2}$ $\in$ $[s,t]$.
This gives
\begin{equation}
I_f=\triangle t f(x,v,t+\triangle t)+R_{f},\label{If}
\end{equation}
where
\begin{eqnarray*}
R_{f}&=&\int^{t+\triangle t}_t(s-t-\triangle t)v\cdot\nabla(f(x_{\theta_2},v,t_{\theta_2}))\\
               &+&\int^{t+\triangle t}_t(s-t-\triangle t)\frac{df}{dt}(x_{\theta_2},v,t_{\theta_2})\\
               &=&\int^{t+\triangle t}_t(s-t-\triangle t){\mathcal M}(f)(x_{\theta_2},v,t_{\theta_2}).
\end{eqnarray*}
for some appropriate $\theta_2\in$ $[t, t+\triangle t].$
Substituting (\ref{IM}) and (\ref{If}) into (\ref{Consist1}), we obtain
\begin{eqnarray*}
f(x,v,t+\triangle t)&=&f(x-\triangle tv,v,t)+\frac{\triangle t}{\kappa}{\mathcal M}(x-\triangle tv,v,s)-\frac{\triangle t}{\kappa} f(x,v,t+\triangle t)\\
&+&\frac{1}{\kappa}(~R_{\mathcal{M}}-R_{f}).
\end{eqnarray*}
We then collect relevant terms to have
\begin{align}
\begin{aligned}\label{Consist2}
f(x,v,t+\triangle t)&=\frac{\kappa}{\kappa+\triangle t}f(x-\triangle tv,v,t)+\frac{\triangle t}{\kappa+\triangle t}
{\mathcal M}(f)(x-\triangle tv,v,s)\cr
&+\frac{1}{\kappa+\triangle t}(~R_{\mathcal{M}}-R_{f})\cr
&=\frac{\kappa}{\kappa+\triangle t}\widetilde{f}(x,v,t)+\frac{\triangle t}{\kappa+\triangle t}
\widetilde{{\mathcal M}}(f)(x,v,s)\cr
&+\frac{1}{\kappa+\triangle t}(~R_{\mathcal{M}}-R_{f})\cr
&=\frac{\kappa}{\kappa+\triangle t}\widetilde{f}(x,v,t)+\frac{\triangle t}{\kappa+\triangle t}
{\mathcal M}(\widetilde{f})(x,v,s)\cr
&+\frac{\triangle t}{\kappa+\triangle t}\Big({\mathcal M}(\widetilde{f})(x,v,s)-\widetilde{\mathcal M}(f)(x,v,s)\Big)\cr
&+\frac{1}{\kappa+\triangle t}(~R_{\mathcal{M}}-R_{f}).
\end{aligned}
\end{align}
Finally, we put
\begin{eqnarray*}
\mathcal{R}_1&=&{\mathcal M}(\widetilde{f})(x,v,s)-\widetilde{\mathcal M}(f)(x,v,s),\nonumber\\
\mathcal{R}_2&=&~R_{\mathcal{M}}-R_{f}.
\end{eqnarray*}
to obtain the desired result.
\end{proof}
Before we estimate these remainder terms, we need to establish the following technical lemmas.
%
%
%
%
\begin{lemma}\label{ddtM} Let $f$ be a smooth solution of (\ref{main.1}) corresponding to an initial data $f_0$.
Then we have for $q\geq d+2$
\begin{eqnarray*}
\|\frac{d}{dt}(\rho, U, T)\|_{L^1}+\|\nabla_x(\rho, U, T)\|_{L^1}+\|v\cdot \nabla_x (\rho, U, T)\|_{L^1}\leq C_q N_q(f_0).
\end{eqnarray*}
\end{lemma}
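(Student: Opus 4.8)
The plan is to bound each macroscopic quantity's time derivative and spatial gradient by expressing them as velocity moments of $f$ and of its derivatives, and then invoking the uniform $\overline{N}_q(f)(t)\le C_{T_f}$ bound from Theorem \ref{Existence_Theorem} together with the lower bounds on $\rho$ and $T$ from \eqref{ULBoundsMacroscopicContinuous}. The key observation is that the three groups of terms are all of the same nature: $v\cdot\nabla_x(\rho,U,T)$ and $\frac{d}{dt}(\rho,U,T)$ differ only by the sign/structure of the transport term, and by the BGK equation \eqref{main.1} the material derivative $\frac{d}{dt}f = \partial_t f + v\cdot\nabla_x f = \frac{1}{\kappa}({\mathcal M}(f)-f)$ is itself controlled in any weighted norm once $\overline N_q(f)$ and $N_q({\mathcal M}(f))$ are. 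So I would reduce everything to two ingredients: (a) moment bounds of the form $\int |f|(1+|v|)^q\,dv \le C\, N_q(f)$ and similarly for $\nabla_x f$, $\partial_t f$; and (b) the fact that differentiating the defining relations of $U$ and $T$ (which involve dividing by $\rho$) only costs powers of $\rho^{-1}$, which are bounded below.

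Concretely, first I would handle $\rho$: since $\nabla_x\rho = \int \nabla_x f\,dv$ and (using the equation) $\frac{d}{dt}\rho = \int \frac{1}{\kappa}({\mathcal M}(f)-f)\,dv = 0$ by the conservation law \eqref{ConservationLaw}, while $v\cdot\nabla_x\rho$ and $\partial_t\rho$ are each dominated by $\int |v|\,|\nabla_x f|\,dv \le C\,\overline N_q(f)\int (1+|v|)^{1-q}\,dv \le C_q\,\overline N_q(f)$ for $q\ge d+2$. For $U$, write $\rho U = \int f v\,dv$, differentiate, and solve for $\nabla_x U = \rho^{-1}\big(\int \nabla_x f\,v\,dv - U\nabla_x\rho\big)$; each piece is a weighted moment of $\nabla_x f$, controlled by $\overline N_q(f)$, and $\|U\|_{L^\infty}$, $\rho^{-1}$ are bounded by \eqref{ULBoundsMacroscopicContinuous} and Lemma \ref{control_macroscopic values_continuous}. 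The same scheme applies to $T$ via $d\rho T = \int f|v-U|^2\,dv$, differentiating and expanding $|v-U|^2$, which introduces at worst $|v|^2$ weights — harmless since $q\ge d+2$ gives integrable tails $(1+|v|)^{2-q}$. For the time derivatives I would substitute $\partial_t f = \frac1\kappa({\mathcal M}(f)-f) - v\cdot\nabla_x f$ and use that $\int ({\mathcal M}(f)-f)\,\phi(v)\,dv = 0$ for $\phi \in \{1,v,|v|^2\}$ by \eqref{ConservationLaw} to kill the collision contribution to $\frac{d}{dt}(\rho,U,T)$ entirely, leaving only the transport moments already bounded. Finally, assembling the three contributions and taking the $L^1_x$ norm (the $x$-domain $\mathbb T^d$ has finite measure, so $\|\cdot\|_{L^1_x}\le C\|\cdot\|_{L^\infty_x}$) yields the claimed bound $\le C_q\,N_q(f_0)$, after replacing $\overline N_q(f)(t)$ by $C_{T_f}\overline N_q(f_0)$ via Theorem \ref{Existence_Theorem} and absorbing the $e^{C_qT_f}$ factor into $C_q$.

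The main obstacle I anticipate is bookkeeping rather than conceptual: carefully tracking which weighted moments appear after differentiating the nonlinear relations for $U$ and especially $T$ (where $|v-U|^2$ must be expanded and the cross terms $U\cdot\int f v\,dv$ handled), and verifying in each case that the velocity weight produced stays below the threshold $q$ so that $\int(1+|v|)^{k-q}\,dv<\infty$. A secondary point requiring care is that $N_q$ as defined in the paper already controls one spatial derivative, so $\overline N_q(f)$ (which also controls a $v$-derivative) is more than enough here; I would simply note that only the $N_q$-part is used for $\rho$ and the transport terms, while the material-derivative identity handles the rest, so no $v$-derivative of $f$ is actually needed — consistent with the lemma being stated with $N_q(f_0)$ on the right-hand side.
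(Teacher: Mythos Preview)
Your proposal is correct and follows essentially the same strategy as the paper: express derivatives of the macroscopic fields as velocity moments of $\partial_t f$ and $\nabla_x f$, use the BGK equation and the conservation identities \eqref{ConservationLaw} to eliminate the collision contribution, bound the remaining moments by $N_q(f)$, and finally pass from $(\rho,\rho U,\rho T)$ to $(\rho,U,T)$ via the lower bound on $\rho$ from Theorem~\ref{Existence_Theorem}. The paper's proof is terser --- it treats only the time-derivative case and states the rest are similar --- whereas you spell out the chain-rule step for $U$ and $T$ more explicitly, but the underlying argument is the same.
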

\begin{proof}
We prove only the first estimate. Other estimates can be treated in a similar manner.
For $\Phi(x,v)=1,v,|v-U(x)|^2$,  we have
\begin{align}
\begin{aligned}
\Big|\int_{\bbr^d} f(x,v,t)\varphi(v) dv\Big|
&\leq N_q(f)\int_{\bbr^d}\frac{\varphi(v)}{(1+|v|)^q}dv\cr
&\leq C_qN_q(f_0)
\end{aligned}
\end{align}
and
\begin{align}
\begin{aligned}
\Big|\frac{d}{d t}\int_{\bbr^d} f(x,v,t)\varphi(v) dv\Big|
&=\Big|\int_{\bbr^d} \frac{\partial f}{\partial t}(x,v,t)\varphi(v) dv\Big|\cr
&=\Big|\int_{\bbr^d}  \Big(v\cdot \nabla f+\frac{1}{\kappa}({\mathcal M}-f)\Big)\varphi(v)dv\Big|\cr
&= \Big|\int_{\bbr^d}  v\cdot \nabla f\varphi(v) dv\Big|\cr
&\leq N_q(f)\int_{\bbr^d} |v|\frac{\varphi(v)}{(1+|v|)^q}dv\cr
&\leq C_qN_q(f_0).
\end{aligned}
\end{align}
Therefore, we have
\begin{align}
\begin{aligned}\label{dtrhoUT}
\big|\rho\big|, ~\big|\rho U\big|,~\big|\rho T\big| \leq C_q N_q(f_0),\cr
\Big|\frac{\partial \rho}{\partial t}\Big|, ~\Big|\frac{\partial (\rho U)}{\partial t}\Big|
,~\Big|\frac{\partial (\rho T)}{\partial t}\Big| \leq C_q N_q(f_0).
\end{aligned}
\end{align}
Then the result follows from (\ref{dtrhoUT}) and the lower bound estimates for local density and temperature given in Theorem \ref{Existence_Theorem}.
\end{proof}
%
%
%
%
\begin{lemma}\label{ddtM2} Let $f$ be a smooth solution corresponding to an initial data $f_0$.
Then we have for $q\geq d+2$
\begin{eqnarray*}
\|\frac{d{\mathcal M}}{dt}\|_{L^1_2}+\|\nabla_x{\mathcal M}\|_{L^1_2}+\|v\cdot \nabla_x {\mathcal M}\|_{L^1_2}\leq C_q N_q(f_0).
\end{eqnarray*}
\end{lemma}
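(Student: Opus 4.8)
The plan is to reduce everything to Lemma \ref{ddtM}, since each of the three quantities is, after multiplication by $(1+|v|)^2$ and integration in $v$, a $v$-integral of $\mathcal M$ against a polynomial with coefficients that are (time or space) derivatives of $(\rho,U,T)$. Concretely, I would first write
\[
\frac{d\mathcal M}{dt}=\frac{\partial \mathcal M}{\partial\rho}\,\frac{d\rho}{dt}
+\frac{\partial \mathcal M}{\partial U}\cdot\frac{dU}{dt}
+\frac{\partial \mathcal M}{\partial T}\,\frac{dT}{dt},
\]
and similarly $\nabla_x\mathcal M$ and $v\cdot\nabla_x\mathcal M$ in terms of $\nabla_x(\rho,U,T)$ and $v\cdot\nabla_x(\rho,U,T)$. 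The explicit formulas for $\partial_\rho\mathcal M$, $\partial_U\mathcal M$, $\partial_T\mathcal M$ are exactly the ones already written out in the proof of Lemma \ref{ContinuityofMaxwellian}; each is $\mathcal M$ times a rational function of $T$ and a polynomial in $(v-U)$.

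The key step is then the pointwise-in-$x$ Gaussian moment bound: for every $k$,
\[
\int_{\bbr^d}\mathcal M(x,v,t)\,(1+|v|^2)\,|v-U(x,t)|^{k}\,dv
\le C_k\,\rho\,(1+|U|^{2})\,(1+T^{k/2})\,(1+|U|^{k})\Big/\text{(suitable powers of }T\text{)},
\]
which after using the lower bounds $T\ge C_qe^{-C_qT_f}$, $\rho\ge C_qe^{-C_qT_f}$ and the upper bounds on $\rho,U,T$ from Theorem \ref{Existence_Theorem} (equivalently \eqref{ULBoundsMacroscopicContinuous}), collapses to a bound of the form $C_{T_f}\big(|\tfrac{d}{dt}(\rho,U,T)|+|\nabla_x(\rho,U,T)|+|v\cdot\nabla_x(\rho,U,T)|\big)$ integrated over $x$ — but the macroscopic multipliers are $L^\infty_x$, so I can pull them out and what remains is precisely
\[
\Big\|\tfrac{d}{dt}(\rho,U,T)\Big\|_{L^1}+\|\nabla_x(\rho,U,T)\|_{L^1}+\|v\cdot\nabla_x(\rho,U,T)\|_{L^1}\le C_qN_q(f_0)
\]
by Lemma \ref{ddtM}. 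Combining gives $\le C_qN_q(f_0)$ for all three terms, which is the claim. (Here I use that $N_q(f)(t)\le C_{T_f}N_q(f_0)$ by part (1) of Theorem \ref{Existence_Theorem}, so all constants can be absorbed into $C_q$ depending on $T_f$.)

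The main obstacle is bookkeeping rather than anything deep: one has to check that every rational factor in $T$ appearing in $\partial_T\mathcal M$ and in the velocity moments is controlled below by the strictly positive lower bound on $T$, and that the various powers $|U|$, $|v-U|$ generated never exceed what the weight $(1+|v|)^2$ and the decaying Gaussian can absorb — this is why $q\ge d+2$ is assumed, exactly as in Lemma \ref{control_macroscopic values_continuous}. I would handle the three terms in parallel, remarking that $v\cdot\nabla_x\mathcal M$ is the worst case because it carries one extra power of $|v|$, which is compensated by writing $v=(v-U)+U$ and distributing; after that the estimate is identical in structure to the $\tfrac{d}{dt}\mathcal M$ case. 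No new ideas beyond Lemma \ref{ddtM} and the continuity-of-Maxwellian computations are needed.
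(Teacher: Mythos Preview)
Your proposal is correct and follows essentially the same route as the paper: chain rule to express each quantity in terms of $\partial_\rho\mathcal M,\partial_U\mathcal M,\partial_T\mathcal M$ multiplied by derivatives of $(\rho,U,T)$, then the Gaussian moment computations from the proof of Lemma~\ref{ContinuityofMaxwellian} to handle the $v$-integrals, and finally Lemma~\ref{ddtM} together with the bounds of Theorem~\ref{Existence_Theorem} to control the macroscopic factors. The only cosmetic difference is that the paper pulls the pointwise bound on the derivatives of $(\rho,U,T)$ out first and then estimates the remaining $v$-integral, whereas you integrate in $v$ first and appeal to the $L^1$ form of Lemma~\ref{ddtM}; both orderings yield the same conclusion.
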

\begin{proof}
We recall the chain rule:
\begin{eqnarray*}
\frac{d{\mathcal M}}{dt}
&=&\frac{\partial \rho}{\partial t}\frac{\partial {\mathcal M}}{\partial \rho}
+\frac{\partial U}{\partial t}\cdot\frac{\partial {\mathcal M}}{\partial U_i}
+\frac{\partial T}{\partial t}\frac{\partial {\mathcal M}}{\partial T},\cr
\nabla_x\mathcal M&=&\nabla_x\rho\frac{\partial M}{\partial \rho}
+\nabla_xU\cdot\frac{\partial M}{\partial U_i}
+\nabla_xT\frac{\partial M}{\partial T},\cr
\nabla_x\mathcal M&=&v\cdot\nabla_x\rho\frac{\partial M}{\partial \rho}
+v\cdot\nabla_xU\cdot\frac{\partial M}{\partial U_i}
+v\cdot\nabla_xT\frac{\partial M}{\partial T}
\end{eqnarray*}
and apply the estimates of the Lemma \ref{ddtM} to obtain
\begin{eqnarray*}
&&\|\frac{d{\mathcal M}}{dt}\|_{L^1_2}+\|\nabla_x{\mathcal M}\|_{L^1_2}+\|v\cdot \nabla_x {\mathcal M}\|_{L^1_2}\cr
&&\hspace{2cm}\leq C_qN_q(f_0)
\int_{\bbr^d}\Big(\Big|\frac{\partial M}{\partial \rho}\Big|+\Big|\frac{\partial M}{\partial U_i}\Big|
+\Big|\frac{\partial M}{\partial T}\Big|\Big)(1+|v|)^2  dv\cr
&&\hspace{2cm}\leq C_qN_q(f_0)\Big[(1+|U|^2+T)+\frac{\rho}{\sqrt{T}}(1+|U|^2+T)\Big]\cr
&&\hspace{2cm}\leq C_qN_q(f_0).
\end{eqnarray*}
This completes the proof.
\end{proof}
The following lemma provides the estimate of the remainder term, which will be crucial
for the convergence proof.
%
%
%
\begin{proposition}
$R_{\mathcal{M}}$, $R_f$ satisfies following estimates.
\begin{eqnarray}
\|\mathcal {R}_1\|_{L^1_2}&\leq& C_{T_f}\|f-\widetilde{f}\|_{L^1_2}+C_qN_q(f_0)\triangle t,\label{Remainder1}\\
\|\mathcal{R}_2\|_{L^1_2}&\leq& C_qN_q(f_0)\triangle t^2.\label{Remainder2}
\end{eqnarray}
\end{proposition}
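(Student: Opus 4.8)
The plan is to estimate $\mathcal{R}_1$ and $\mathcal{R}_2$ separately, reducing each to the continuity estimate of Lemma~\ref{ContinuityofMaxwellian} and the weighted bounds of Lemmas~\ref{ddtM} and~\ref{ddtM2}, and using repeatedly the elementary fact that the shift $g\mapsto g(\cdot-v\triangle t,v)$ is measure-preserving in $x$ on $\bbt^d$, hence commutes with $\|\cdot\|_{L^1_2}$. For $\mathcal{R}_1$ I would insert ${\mathcal M}(f)(x,v,s)$ and split
\[
\mathcal{R}_1=\big[{\mathcal M}(\widetilde f)-{\mathcal M}(f)\big](x,v,s)+\big[{\mathcal M}(f)(x,v,s)-{\mathcal M}(f)(x-v\triangle t,v,s)\big].
\]
The first bracket is exactly of the form controlled by Lemma~\ref{ContinuityofMaxwellian}, applied to the pair $(f,\widetilde f)$: the function $\widetilde f(\cdot,\cdot,s)=f(\cdot-v\triangle t,v,s)$ has the same $N_q$-norm and the same kind of upper bounds on its macroscopic fields as $f$, while its local density $\widetilde\rho=\int f(x-v\triangle t,v,s)\,dv$ is bounded below through the mild formulation $f(x,v,s)\ge e^{-s/\kappa}f_0(x-vs,v)$ together with hypothesis~\eqref{LowerBoundforLocalDensity}, and $\widetilde T$ is then bounded below by the elementary interpolation bound (cf. Lemma~\ref{control_macroscopic values}). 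Running the proof of Lemma~\ref{ContinuityofMaxwellian} for this pair thus gives $\|{\mathcal M}(\widetilde f)-{\mathcal M}(f)\|_{L^1_2}\le C_{T_f}\|f-\widetilde f\|_{L^1_2}$.

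For the second bracket I would write the spatial increment as $\triangle t\int_0^1\big(v\cdot\nabla_x{\mathcal M}(f)\big)(x-\tau v\triangle t,v,s)\,d\tau$, multiply by $(1+|v|)^2$, integrate in $(x,v)$, and for each fixed $\tau$ substitute $y=x-\tau v\triangle t$ to remove the shift; by Lemma~\ref{ddtM2} this is at most $\triangle t\,\|v\cdot\nabla_x{\mathcal M}\|_{L^1_2}\le C_qN_q(f_0)\triangle t$. Adding the two brackets gives~\eqref{Remainder1}.

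For $\mathcal{R}_2=R_{\mathcal{M}}-R_f$ I would first rewrite the two Taylor expansions of the consistency theorem with integral remainders, so that the spatial shift in each remainder depends only on the integration variable; this yields
\[
R_{\mathcal{M}}=\int_t^{t+\triangle t}\int_t^{s}\big(\partial_\sigma{\mathcal M}+v\cdot\nabla_x{\mathcal M}\big)\big(x-(t+\triangle t-\sigma)v,v,\sigma\big)\,d\sigma\,ds
\]
and an analogous double time integral for $R_f$, whose integrand is $\frac1\kappa({\mathcal M}(f)-f)$ by~\eqref{main.1} (consistently with the ${\mathcal M}(f)$ appearing in the statement). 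Taking $\|\cdot\|_{L^1_2}$, moving the norm under the $\sigma$-integral, and using the shift substitution once more, the inner norm is at most $\|\partial_t{\mathcal M}\|_{L^1_2}+\|v\cdot\nabla_x{\mathcal M}\|_{L^1_2}\le C_qN_q(f_0)$ for $R_{\mathcal{M}}$ (Lemma~\ref{ddtM2}) and at most $\frac1\kappa\big(\|{\mathcal M}(f)\|_{L^1_2}+\|f\|_{L^1_2}\big)\le C_qN_q(f_0)$ for $R_f$ (Lemma~\ref{ddtM}, together with $\|f\|_{L^1_2}\le C_qN_q(f_0)$ since $q>d+2$). Since $\int_t^{t+\triangle t}(s-t)\,ds=\int_t^{t+\triangle t}(t+\triangle t-s)\,ds=\triangle t^2/2$, this yields $\|R_{\mathcal{M}}\|_{L^1_2},\,\|R_f\|_{L^1_2}\le C_qN_q(f_0)\triangle t^2$ and hence~\eqref{Remainder2}.

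I expect the two genuinely delicate points to be: first, justifying that Lemma~\ref{ContinuityofMaxwellian} may be invoked with the non-solution $\widetilde f$ in the second slot, which is where the lower bound on the \emph{shifted} local density, and hence hypothesis~\eqref{LowerBoundforLocalDensity}, is really used; and second, commuting the weighted $L^1$-norm past the shifted spatial arguments appearing in $R_{\mathcal{M}}$ and $R_f$ — which is why I would replace the Lagrange-form remainders of the consistency theorem by integral-form ones before estimating.
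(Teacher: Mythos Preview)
Your proposal is correct and follows the same approach as the paper: the same splitting of $\mathcal{R}_1$ via ${\mathcal M}(f)$ followed by Lemma~\ref{ContinuityofMaxwellian} and Lemma~\ref{ddtM2}, and the same reduction of $\mathcal{R}_2$ to the bounds of Lemma~\ref{ddtM2} integrated against $(s-t)$ over $[t,t+\triangle t]$. You are in fact more careful than the paper on the two points you flag as delicate---the paper invokes Lemma~\ref{ContinuityofMaxwellian} on the pair $(f,\widetilde f)$ without comment and estimates the Lagrange-form remainders directly without addressing the $(x,v,s)$-dependence of $\theta_1,\theta_2$ when moving the $L^1_2$-norm inside---so your integral-remainder reformulation is a cleaner version of the same argument.
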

\begin{proof}
We observe that
\begin{eqnarray*}
\|f-\bar f\|_{L^1_2}\leq \|\triangle t v\cdot \nabla_x f\|_{L^1_2}\leq C_qN_q(f)\triangle t.\label{This_PartIII_Chap1}
\end{eqnarray*}
This, with the estimate of Lemma \ref{ddtM2}, yields
\begin{eqnarray*}
\|\mathcal {R}_1\|_{L^1_2}&=&\|{\mathcal M}(\widetilde{f})-\widetilde{\mathcal M}(f)\|_{L^1_2}\\
&\leq&\|{\mathcal M}(\widetilde{f})-{\mathcal M}(f)\|_{L^1_2}
+\|{\mathcal M}(f)-\widetilde{\mathcal M}(f)\|_{L^1_2}\\
&\leq&C_{T_f}\|f-\widetilde{f}\|_{L^1_2}+\|\triangle t v\cdot\nabla_{x}{\mathcal{M}(f)}\|_{L^1_2}\\
&\leq&C_{T_f}\|f-\widetilde{f}\|_{L^1_2}+C_q N_q(f_0)\triangle t.
\end{eqnarray*}
On the other hand, we have by Lemma \ref{ddtM2}
\begin{eqnarray*}
\|\mathcal{R}_2\|_{L^1_2}&\leq&\int^{t+\triangle t}_t(s-t)\Big(~\|v\cdot \nabla\mathcal{M}(f)\|_{L^1_2}+\|\frac{d}{dt}\mathcal{M}(f)\|_{L^1_2}+\|{\mathcal M}(f)\|_{L^1_2}\Big)ds\\
&\leq&C_qN_q(f_0)\int^{t+\triangle t}_t(s-t)ds\\
&\leq&C_qN_q(f_0)(\triangle t)^2.
\end{eqnarray*}
\end{proof}

%
%
%
%
\section{{\bf Proof of the main result.}}
Before we delve into the proof of the main theorem, we need to establish some technical lemmas.
%
%
%
%
\begin{lemma}\label{Additional_Lemma1}Suppose $N_q(f_{0}),~N_q(f^0)<\infty$ with $q>d+2$, then we have
\begin{eqnarray*}
\|\widetilde{f}-\widetilde{f}^n_R\|_{L^1_2}\leq\|f-f^n_R\|_{L^1_2}+C_q\big(N_q(f_0)+N_q(f^0)\big)(\triangle x+\triangle v\triangle t).
\end{eqnarray*}\label{f-Ef}
\end{lemma}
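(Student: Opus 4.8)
The plan is to route from $\widetilde f$ to $\widetilde f^n_R$ through the \emph{exact} characteristic translate of the extended discrete solution. Put $\bar f(x,v):=f^n_R(x-v\triangle t,v)$ (at the time level $n$), which is a well-defined $1$-periodic function on $\bbt\times\bbr$ by Lemma \ref{PeriodicityofFnR}, and write $\widetilde f-\widetilde f^n_R=(\widetilde f-\bar f)+(\bar f-\widetilde f^n_R)$; I would estimate the two terms separately.

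For the first term, for each fixed $v$ the translation $x\mapsto x-v\triangle t$ is a measure-preserving bijection of $\bbt$ and leaves the velocity weight unchanged, so the substitution $y=x-v\triangle t$, using the periodicity of $f(\cdot,v,t)$ and of $f^n_R(\cdot,v)$, gives exactly $\|\widetilde f-\bar f\|_{L^1_2}=\|f-f^n_R\|_{L^1_2}$, which is the term already appearing on the right-hand side. For the second term I would split off the velocity discretization of the characteristic and the spatial reconstruction, writing $\bar f-\widetilde f^n_R=(\mathrm{I})+(\mathrm{II})$ with
\[
(\mathrm{I}):=f^n_R(x-v\triangle t,v)-f^n_R(x-C_2(v)\triangle t,v),\qquad (\mathrm{II}):=f^n_R(x-C_2(v)\triangle t,v)-\widetilde f^n_R(x,v).
\]
Both terms vanish for $|v|>R$. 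For $|v|\le R$ one has $f^n_R(\cdot,v)=f^n_R(\cdot,C_2(v))$ by the definition of $f^n_R$, and this is a continuous, piecewise-linear function of $x$ with breakpoints at the grid nodes and slope on each cell bounded by $N^1_q(f^n_R)/(1+|C_2(v)|)^q$; since $|v-C_2(v)|\le\triangle v/2$ and $(1+|C_2(v)|)^{-q}\le C_q(1+|v|)^{-q}$, this gives $|(\mathrm{I})|\le C_q N^1_q(f^n_R)(1+|v|)^{-q}\,\triangle v\,\triangle t$. For $(\mathrm{II})$, the definition of the extension operator together with Lemmas \ref{ConsistLemma1} and \ref{ConsistLemma2} identifies $\widetilde f^n_R(\cdot,v)$, on each cell $[x_i,x_{i+1})$, with the nodal linear interpolant (through $x_i$ and $x_{i+1}$) of the shifted field $x\mapsto f^n_R(x-C_2(v)\triangle t,C_2(v))$; that field is piecewise linear with a single kink inside each cell, and the sup-deviation of such a function from its secant over an interval of width $\triangle x$ is at most $\tfrac14\triangle x$ times the slope jump, whence $|(\mathrm{II})|\le C_q N^1_q(f^n_R)(1+|v|)^{-q}\,\triangle x$.

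Integrating the two pointwise bounds against $(1+|v|)^2\,dx\,dv$ over $\bbt\times\{|v|\le R\}$ and using $\int_{\bbr^d}(1+|v|)^{2-q}\,dv<\infty$ for $q>d+2$ yields $\|(\mathrm{I})\|_{L^1_2}\le C_q N^1_q(f^n_R)\,\triangle v\,\triangle t$ and $\|(\mathrm{II})\|_{L^1_2}\le C_q N^1_q(f^n_R)\,\triangle x$. Adding the three contributions and invoking the stability estimate of Lemma \ref{ControllonFnR} (together with $N^1_q(f^n_R)\le N_q(f^n_R)$ and $N_q(f^0_R)\le N_q(f^0)$) bounds $N^1_q(f^n_R)$ by $C_qN_q(f^0)\le C_q(N_q(f_0)+N_q(f^0))$, which is the asserted inequality. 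The single delicate point is the estimate for $(\mathrm{II})$: one must recognize that $\widetilde f^n_R$ is precisely the nodal linear interpolant of the \emph{already} piecewise-linear, exactly shifted field $f^n_R$, and then quantify the interpolation error of a piecewise-linear function carrying one kink per spatial cell; once that geometric fact is in place, the remainder is a measure-preserving change of variables plus the a priori bounds already in hand. (Alternatively, one may carry the $\triangle v\,\triangle t$ error on the smooth side through $\|\partial_x f\|_{L^\infty}$ and Theorem \ref{Existence_Theorem}, which explains why $N_q(f_0)$ also appears on the right.)
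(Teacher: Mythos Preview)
Your argument is correct and takes a genuinely different route from the paper. The paper splits $\|\widetilde f-\widetilde f^n_R\|_{L^1_2}$ into \emph{four} pieces, passing first from $f(x-v\triangle t,\cdot)$ to $f(x-C_2(v)\triangle t,\cdot)$ on the \emph{smooth} side (which is where the $N_q(f_0)$ contribution originates), then to $f^n_R(x-C_2(v)\triangle t,\cdot)$ by change of variables, then to $f^n_R(C_1(x)-C_2(v)\triangle t,\cdot)$, and finally to $\widetilde f^n_R$ by an explicit cell-by-cell comparison of the two linear interpolants, with a case split according to whether the shifted foot lands in $[x_{s-1},x_s)$ or $[x_s,x_{s+1})$. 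You instead do the change of variables first, which yields $\|f-f^n_R\|_{L^1_2}$ exactly, and then carry both discretization errors on the \emph{discrete} side: the $\triangle v\,\triangle t$ piece via the global Lipschitz constant of the piecewise-linear $f^n_R(\cdot,C_2(v))$, and the $\triangle x$ piece by the clean geometric observation that $\widetilde f^n_R(\cdot,v)$ is the nodal secant of a piecewise-linear function with a single kink per spatial cell, so the deviation is at most $\tfrac14\triangle x$ times the slope jump. This replaces the paper's terms $III$ and $IV$ (and their case analysis) by one line, and it actually produces the slightly sharper bound $C_qN_q(f^0)(\triangle x+\triangle v\,\triangle t)$, needing only the discrete stability Lemma~\ref{ControllonFnR}; your parenthetical correctly explains the alternative bookkeeping that makes $N_q(f_0)$ appear. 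One small correction: the identification of $\widetilde f^n_R(\cdot,v)$ with the nodal interpolant of $x\mapsto f^n_R(x-C_2(v)\triangle t,C_2(v))$ follows from Lemma~\ref{ConsistLemma1} together with the definition~(\ref{shift}) of $\widetilde f^n_{i,j}$; Lemma~\ref{ConsistLemma2} concerns macroscopic fields and is not needed here.
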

\begin{proof}
We divide the estimate of $\|\widetilde{f}-\widetilde{f}^n_R\|_{L^1_2}$ into the following four integrals.
\begin{align}
\begin{aligned}\label{I+II}
&\|\widetilde{f}-\widetilde{f}^n_R\|_{L^1_2}\cr
&\hspace{0.6cm}=\int |f(x-v\triangle t,v,t)-\widetilde f^n_R(x,v)|(1+|v|)^2dxdv\cr
&\hspace{0.6cm}\leq\int|f(x-v\triangle t,v,t)-f(x-C_2(v)\triangle t,v,t)|(1+|v|)^2dxdv\cr
&\hspace{0.6cm}+\int |f(x-C_2(v)\triangle t,v,t)-f^n_R(x-C_2(v)\triangle t,v)|(1+|v|)^2dxdv\cr
&\hspace{0.6cm}+\int |f^n_R(x-C_2(v)\triangle t,v,t)-f^n_R(C_1(x)-C_2(v)\triangle t,v)|(1+|v|)^2dxdv\cr
&\hspace{0.6cm}+\int |f^n_R(C_1(x)-C_2(v)\triangle t,v,t)-\widetilde f^n_R(x,v)|(1+|v|)^2dxdv\cr
&\hspace{0.6cm}=I+II+III+IV.
\end{aligned}
\end{align}
By Taylor's theorem, $I$ can be treated as follows
\begin{eqnarray*}
I&=&\int|f(x-v\triangle t,v,t)-f(x-C_2(v)\triangle t,v,t)|(1+|v|)^2dxdv\\
&=&\int|v-C_2(v)|\triangle t|\partial_xf(x_{\theta},v,t)|(1+|v|)^2dxdv\\
&\leq&\triangle v\triangle t\int|\partial_xf(x_{\theta},v,t)|(1+|v|)^2dxdv\quad(\because ~|v-C_2(v)|\leq \triangle v)\\
&\leq&\triangle v\triangle tN_q(f)\int \frac{1}{(1+|v|)^q}(1+|v|)^2dxdv\\
&=&C\triangle v\triangle tN_q(f_0).
\end{eqnarray*}
On the other hand, we have by the change of variables with respect to $x$
\begin{eqnarray*}
II
&=&\int |f(x-C_2(v),v,t)-f^n_R(x-C_2(v)\triangle t,v)|(1+|v|)^2dxdv\\
&=&\int |f(x,v,t)-f^n_R(x,v)|(1+|v|)^2dxdv\\
&=&\|f-f^n_R\|_{L^1_2}.
\end{eqnarray*}
We now turn to the estimate of $III$. 
We define for simplicity $R_{ij}$ as
\begin{eqnarray*}
III&=&\int |f^n_R(x-C_2(v)\triangle t,v)-f^n_R(C_1(x)-C_2(v)\triangle t,v)|(1+|v|)^2dxdv\cr
&=&\int \sum_{i,j}\Big|\Big(\frac{x-v_j\triangle t-x_{s-1}}{\triangle x}f^n_{s,j,R}+
\frac{x_s-(x-v_j\triangle t)}{\triangle x}f^n_{s-1,j,R}\Big)\cr
&&-\Big(\frac{x_i-v_j\triangle t-x_{s}}{\triangle x}f^n_{s+1,j,R}+
\frac{x_{s+1}-(x_i-v_j\triangle t)}{\triangle x}f^n_{s,j,R}\Big)\Big|
(1+|v|)^2{\mathcal A}^R_{i,j}(x,v)dxdv\cr
&\leq&\sum_{i,j}\int \Big|\Big(\frac{x-v_j\triangle t-x_{s-1}}{\triangle x}f^n_{s,j,R}+
\frac{x_s-(x-v_j\triangle t)}{\triangle x}f^n_{s-1,j,R}\Big)\cr
&&-\Big(\frac{x_i-v_j\triangle t-x_{s}}{\triangle x}f^n_{s+1,j,R}+
\frac{x_{s+1}-(x_i-v_j\triangle t)}{\triangle x}f^n_{s,j,R}\Big)\Big|
(1+|v|)^2{\mathcal A}^R_{i,j}(x,v)dxdv\cr
&\equiv&\sum_{i,j}\int ~R_{ij}~
(1+|v|)^2{\mathcal A}^R_{i,j}(x,v)dxdv.
\end{eqnarray*}
We first observe that if $x\in [x_{i-1},x_i)$ and $x_i-v_j\triangle t $
$\in [x_s, x_{s+1})$, then we have
either $x-v_j\triangle t \in [x_{s-1}, x_{s})$ or $x-v_j\triangle t \in [x_s, x_{s+1})$. Hence we divide
the estimate into the following two cases.\newline
$(i)$ The case of $x-v_j\triangle t \in [x_{s-1}, x_{s})$:
For brevity, we put
\begin{eqnarray*}
a_{i,j}&=&\frac{x-v_j\triangle t-x_{s-1}}{\triangle x},\cr
b_{i,j}&=&\frac{x_i-v_j\triangle t-x_{s}}{\triangle x}
\end{eqnarray*}
to see
\begin{eqnarray*}
R_{i,j}&=&\big|~a_{i,j}f^n_{s,j,R}+(1-a_{i,j})f^n_{s-1,j,R}
-\big(~b_{i,j}f^n_{s+1,j,R}+(1-b_{i,j})f^n_{s,j,R}~\big)~\big|\cr
&\leq&b_{i,j}|f^n_{s+1,j,R}-f^n_{s,j,R}|+(1-a_{i,j})|f^n_{s,jR}-f^n_{s-1,j,R}|\cr
&\leq&\big\{b_{i,j}N^1_{q}(f^n_R)+(1-a_{i,j})N_q(f^n_R)\big\}\frac{\triangle x}{(1+|v_j|)^q}\cr
&\leq&2N_q(f^n_R)\frac{\triangle x}{(1+|v_j|)^q}.
\end{eqnarray*}
$(ii)$ The case of $x-v_j\triangle t \in [x_{s}, x_{s+1})$:
An almost identical argument gives
\begin{eqnarray*}
R_{i,j}\leq 2N_q(f^n_R)\frac{\triangle x}{(1+|v_j|)^q}.
\end{eqnarray*}
From $(i)$ and $(ii)$, we have
\begin{eqnarray*}
&&\int ~R_{ij}~(1+|v|)^2{\mathcal A}^R_{i,j}(x,v)dxdv\cr
&&\hspace{2cm}\leq 2N_q(f^n_R)\int^{v_{i+\frac{1}{2}}}_{v_{i-\frac{1}{2}}}\int^{x_{i+1}}_{x_i} \frac{\triangle x}{(1+|v_j|)^q}(1+|v|)^2dxdv\cr
&&\hspace{2cm}\leq 2N_q(f^n_R)\int^{v_{i+\frac{1}{2}}}_{v_{i-\frac{1}{2}}}\int^{x_{i+1}}_{x_i} \frac{\triangle x}{(1+|v_j|)^q}(1+|v_j+\frac{1}{2}\triangle v|)^2dxdv\cr
&&\hspace{2cm}\leq C_qN_q(f^n_R)\int^{v_{i+\frac{1}{2}}}_{v_{i-\frac{1}{2}}}\int^{x_{i+1}}_{x_i} \frac{\triangle x}{(1+|v_j|)^{q-2}}dxdv\cr
&&\hspace{2cm}\leq N_q(f^n_R)\frac{C_q(\triangle x)^2\triangle v}{(1+|v_j|)^{q-2}},
\end{eqnarray*}
which gives
\begin{eqnarray*}
III&\leq&C_qN_q(f^n_R)\sum_{i,j}\frac{(\triangle x)^2\triangle v}{(1+|v_j|)^{q-2}}\cr
&\leq&C_qN_q(f^n_R)\triangle x.
\end{eqnarray*}
Finally we estimate $IV$. Suppose $x_i-v_j \triangle t \in [x_s,x_{s+1})$, which implies
$x_{i+1}-v_j \triangle t \in [x_{s+1},x_{s+2})$. This gives for $(x,v)\in [x_{i-1},x_i)\times[v_{j-\frac{1}{2}}, v_{j+\frac{1}{2}})$
\begin{eqnarray*}
f^n_R(x_i-v_j\triangle t,v)=\frac{x_i-v_j\triangle t-x_s}{\triangle x}f^n_{s+1,j,R}+\frac{x_{s+1}-(x_i-v_j\triangle t)}{\triangle x}f^n_{s,j,R}
\end{eqnarray*}
and
\begin{eqnarray*}
\widetilde f^n_R(x,v)&=&\frac{x-x_i}{\triangle x}\widetilde f^n_{i+1,j,R}+\frac{x_{i+1}-x}{\triangle x}\widetilde f^n_{i,j,R}\cr
&=&\frac{x-x_i}{\triangle x}
\Big(\frac{x_{i+1}-v_j\triangle t-x_{s+1}}{\triangle x}f^n_{s+2,j,R}+\frac{x_{s+2}-(x_{i+1}-v_j\triangle t)}{\triangle x}f^n_{s+1,j,R}\Big)\cr
&+&\frac{x_{i+1}-x}{\triangle x}
\Big(\frac{x_{i}-v_j\triangle t-x_{s}}{\triangle x}f^n_{s+1,j,R}+\frac{x_{s+1}-(x_{i}-v_j\triangle t)}{\triangle x}f^n_{s,j,R}\Big).
\end{eqnarray*}
Hence we have for $(x,v)\in[x_{i-1},x_i)\times[v_{j-\frac{1}{2}}, v_{j+\frac{1}{2}})$
\begin{eqnarray*}
&&f^n_R(C_{1}(x)-C_2(v),v)-\widetilde f^n_R(x,v)\cr
&&\hspace{1cm}-\frac{x-x_i}{\triangle x}
\Big(\frac{x_{i+1}-v_j\triangle t-x_{s+1}}{\triangle x}f^n_{s+2,j,R}+\frac{x_{s+2,R}-(x_{i+1}-v_j\triangle t)}{\triangle x}f^n_{s+1,j,R}\Big)\cr
&&\hspace{1cm}+\frac{x-x_i}{\triangle x}
\Big(\frac{x_{i}-v_j\triangle t-x_{s}}{\triangle x}f^n_{s+1,j,R}+\frac{x_{s+1}-(x_{i}-v_j\triangle t)}{\triangle x}f^n_{s,j,R}\Big).
\end{eqnarray*}
Therefore, we have by a similar argument as in $III$
\begin{eqnarray*}
IV &\leq& C_qN_q(f^n_R)\sum_{i,j}\Big|\frac{x-x_i}{\triangle x}\Big|\frac{(\triangle x)^2\triangle v}{(1+|v_j|)^{q-2}}\cr
&\leq& C_qN_q(f^n_R)\triangle x,
\end{eqnarray*}
where we used $\Big|\frac{x-x_i}{\triangle x}\Big|\leq 1$.
Substituting these estimates into (\ref{I+II}), we obtain the desired result.
\end{proof}
%
%
%
%
\begin{lemma}\label{Additional_Lemma2}Suppose $N_q(f_{0R}),~N_q(f^0_R)<\infty$ with $q>d+2$, then we have
\begin{align}
\begin{aligned}
&(a)~\|{\mathcal M}(\widetilde{f_R})-\mathcal{M}(\widetilde{f}^n_R)\|_{L^1_2}\leq C_{T_f}\|f-f^n_R\|_{L^1_2}
+C_qN_q(f_{0R})(\triangle x+\triangle v\triangle t),\nonumber\cr
&(b)~\|{\mathcal M}(\widetilde{f}^n_R)-\mathcal{M}^n(\widetilde{f}^n_R)\|_{L^1_2}\leq C_qN_q(f_{0})\triangle v,\nonumber\cr
&(c)~\|{\mathcal M^n(\widetilde{f}^n_R)}-E\big({\mathcal M}^n(\widetilde{f}^n_R)\big)\|_{L^1_2}\leq C_qN_q(f_{0})(\triangle x+\triangle v).\nonumber
\end{aligned}\label{M-EM}
\end{align}
\end{lemma}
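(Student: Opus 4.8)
Estimates (a)--(c) are the three steps that convert the consistent form \eqref{Consist3} into the reformulated scheme \eqref{refom_scheme}, and each compares two functions differing by a small perturbation. In all three cases the plan is the same: reduce to an estimate on the \emph{arguments} of the Maxwellian by its $L^1_2$--Lipschitz continuity, then control that estimate by an interpolation or quadrature error plus a Taylor remainder. One uses repeatedly that, by Lemmas~\ref{Nqf_Nqtildef} and \ref{ControllonFnR}, $N_q(\widetilde f^n_R)\le N_q(f^n_R)\le e^{C_qT_f}N_q(f^0_R)\le e^{C_qT_f}N_q(f_0)$, so all the constants $C_qN_q(f_0)$ below are uniform in $n$, and that, by Lemmas~\ref{control_macroscopic values} and \ref{ULBoundsMacroscopicDiscreteLemma}, the macroscopic fields of $f^n_R$ and $\widetilde f^n_R$ are bounded above, with density and temperature bounded below. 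For (a), I would first note that both $\widetilde f$ and $\widetilde f^n_R$ have their densities and temperatures bounded above and below --- for $\widetilde f$ from Theorem~\ref{Existence_Theorem} together with the elementary bound $f(x,v,t)\ge e^{-t/\kappa}f_0(x-vt,v)$ and \eqref{LowerBoundforLocalDensity}, and for $\widetilde f^n_R$ from the lemmas just cited --- so that the argument of Lemma~\ref{ContinuityofMaxwellian}, which uses only such bounds, applies verbatim to the pair $(\widetilde f,\widetilde f^n_R)$ and gives $\|{\mathcal M}(\widetilde f)-{\mathcal M}(\widetilde f^n_R)\|_{L^1_2}\le C_{T_f}\|\widetilde f-\widetilde f^n_R\|_{L^1_2}$. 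It then remains to invoke Lemma~\ref{Additional_Lemma1}, which bounds the right-hand side by $\|f-f^n_R\|_{L^1_2}+C_qN_q(f_0)(\triangle x+\triangle v\triangle t)$.

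For (b), the decisive observation is that the same function $\widetilde f^n_R$ enters both sides, so the two densities coincide, while $U,T$ differ from $\widetilde U,\widetilde T$ only because $v$ is replaced by $C_2(v)$ in the defining moments. Since $|v-C_2(v)|\le\triangle v/2$ pointwise and $N_q(\widetilde f^n_R)<\infty$, one gets $|\rho U-\widetilde\rho\widetilde U|+|\rho T-\widetilde\rho\widetilde T|\le C_qN_q(f_0)\triangle v$, hence, using the lower bound on the density, $|U-\widetilde U|+|T-\widetilde T|\le C_qN_q(f_0)\triangle v$. I would then Taylor-expand ${\mathcal M}(\widetilde f^n_R)-{\mathcal M}^n(\widetilde f^n_R)$ both in the parameters $(\rho,U,T)$ and in the substitution $v\mapsto C_2(v)$ in the Gaussian exponent, exactly as in part (II) of the proof of Lemma~\ref{ControlofMaxwellian} and in Lemma~\ref{ContinuityofMaxwellian}: the derivatives of the Maxwellian weighted by $(1+|v|)^q$ are uniformly bounded, so integrating the remainder against $(1+|v|)^2\le(1+|v|)^q$ produces the bound $C_qN_q(f_0)\triangle v$.

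For (c), I would use that ${\mathcal M}^n(\widetilde f^n_R)(x,v)$ depends on $v$ only through $C_2(v)$, hence is already piecewise constant along the velocity grid used by the extension operator $E$; therefore $E$ does not touch the velocity dependence and merely replaces, on each spatial cell, the function $x\mapsto{\mathcal M}^n(\widetilde f^n_R)(x,v_j)$ --- which is smooth there, since $\widetilde\rho^n_R,\widetilde\rho^n_R\widetilde U^n_R,\widetilde\rho^n_R\widetilde T^n_R$ are affine in $x$ on each cell --- by its piecewise-linear interpolant at the nodes $x_i$. The interpolation error is $O(\triangle x)$ with constant $\sup_{x,v}(1+|v|)^2|\partial_x{\mathcal M}^n(\widetilde f^n_R)(x,v)|$, finite by a chain-rule estimate as in Lemma~\ref{ddtM2} (the $x$-slopes of the affine macroscopic fields are controlled by $N^1_q(\widetilde f^n_R)$, and density and temperature are bounded below), which gives $C_qN_q(f_0)\triangle x$. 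The remaining $O(\triangle v)$ comes from the mismatch at the velocity cut-off $|v|=R$: on the half-cell where ${\mathcal M}^n(\widetilde f^n_R)$ is nonzero but $E$ is deactivated one bounds the contribution directly by the exponential decay of the Maxwellian, costing at most one factor $\triangle v$.

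The main obstacle in the lemma is the bookkeeping around the lower bounds and the truncation: one must check that $\widetilde f^n_R$ --- not merely $f^n_R$ --- inherits the lower bounds on density and temperature used in (a) and (c), which means revisiting the proof of Lemma~\ref{ULBoundsMacroscopicDiscreteLemma} (it in fact already passes through $\widetilde f^{n-1}_R$); and in (c) one must track the velocity cut-off carefully so that the error stays $O(\triangle x+\triangle v)$ with no hidden $1/(1+R)$ loss. Everything else is a routine Taylor expansion and weighted integration of Gaussian moments.
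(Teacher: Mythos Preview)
Your proof of (a) is identical to the paper's: apply the Lipschitz estimate of Lemma~\ref{ContinuityofMaxwellian} and then Lemma~\ref{Additional_Lemma1}.

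For (b) and (c) your route is correct but differs from the paper's. In (b) the paper does \emph{not} compare the macroscopic fields at all: it treats ${\mathcal M}(\widetilde f^n_R)$ and ${\mathcal M}^n(\widetilde f^n_R)$ as built from the \emph{same} $\widetilde\rho^n_R,\widetilde U^n_R,\widetilde T^n_R$ and estimates only the substitution $v\mapsto C_2(v)$ in the exponential by a one--variable Taylor expansion, bounding $|\partial_v{\mathcal M}|$ via the lower/upper bounds on the fields. Your version, which also tracks the $O(\triangle v)$ discrepancy between the $v$--moments and the $C_2(v)$--moments of $\widetilde f^n_R$, is more careful about the notational ambiguity in ${\mathcal M}(\widetilde f^n_R)$ and costs only one extra (routine) step.

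For (c) the paper ignores the fact you noticed --- that ${\mathcal M}^n(\widetilde f^n_R)$ already depends on $v$ only through $C_2(v)$ --- and instead bounds $|{\mathcal M}^n(\widetilde f^n_R)(x,v)-{\mathcal M}^n(\widetilde f^n_R)(x_{m+1},v_\ell)|$ on each cell by a crude Taylor expansion in \emph{both} $x$ and $v$, picking up an unnecessary $\triangle v$. Your argument is sharper and actually yields $O(\triangle x)$ on the grid region; the remaining $\triangle v$ in the stated bound is then free. Your attempt to attribute the $\triangle v$ to the cut-off at $|v|=R$ is the one shaky step: that tail contribution is governed by the Gaussian decay of ${\mathcal M}^n$ (hence by $R$), not by $\triangle v$, so it does not literally produce a factor $\triangle v$. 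Since the lemma only asserts an upper bound, this does not affect validity --- just drop that sentence, or note that the tail is $O(e^{-cR^2})$ and is absorbed later.
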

\begin{proof}
By Lemma \ref{ContinuityofMaxwellian} and Lemma \ref{Additional_Lemma1} $(a)$, we have
\begin{eqnarray*}
\|{\mathcal M}(\widetilde{f})-\mathcal{M}(\widetilde{f}^n_R)\|_{L^1_2}&\leq& C_{T_f}\|\widetilde{f}-\widetilde{f}^n_R\|_{L^1_2}\\
&\leq &C_{T_f}\|f-f^n_R\|_{L^1_2}+C_qN_q(f_0)(\triangle x+\triangle v\triangle t),
\end{eqnarray*}
which concludes (a).\\
We now prove (b). Let
\[
V_i=\{v_{i-\frac{1}{2}}\leq |v|<v_{i+\frac{1}{2}}\}.
\]
Then we have
\begin{eqnarray*}
&&\hspace{-1cm}{\mathcal M}(\widetilde{f}^n_R)-\mathcal{M}^n(\widetilde{f}^n_R)\\
&&=\frac{ {\widetilde\rho}^n_{R}}{\sqrt{(2\pi \widetilde {T}^n_{R}})^N}\exp(-\frac{|v-\widetilde {U}^n_R|^2}{2\widetilde {T}^n_R})
-\sum_{i}\frac{ {\widetilde\rho}^n_{R}}{\sqrt{(2\pi \widetilde {T}^n_{R}})^N}\exp(-\frac{|v-\widetilde {U}^n_R|^2}{2\widetilde {T}^n_R})
\mathcal{X}_{V_i}\\
&&\leq \sum_i|v-C_2(v)|\sup_{V_i}\Big|\frac{\partial \mathcal{M}(\widetilde{f}^n_R)}{\partial v}\Big|\mathcal{X}_{V_i}\\
&&\leq\triangle v\sum_i\sup_{V_i}\Big|\frac{\partial \mathcal{M}(\widetilde{f}^n_R)}{\partial v} \Big|\mathcal{X}_{V_i}.
\end{eqnarray*}
To estimate this, we recall Lemma \ref{ULBoundsMacroscopicDiscreteLemma} to see
\begin{eqnarray*}
\frac{\partial\mathcal{M}(\widetilde{f}^n_R)}{\partial v}&\leq& \frac{\rho^n_R}{\sqrt{(2\pi T^n_R)^d}}\frac{|v_{\theta}-U^n_R|}{\sqrt{T^n_R}}\exp\Big(-\frac{|v_{\theta}-U^n_R|^2}{2T^n_R}\Big)\\
&\leq&C_T\exp\Big(-\frac{|v_{\theta}-U^n_R|^2}{2T^n_R}\Big)\\
&\leq&C_T\exp(-C_T|v_i|^2),
\end{eqnarray*}
where we used
\begin{eqnarray*}
\exp\Big(-\frac{|v_{\theta}-U^n_R|^2}{2T^n_R}\Big)&=&
\exp\Big(-\frac{|v_i-(v_i-v_{\theta})-U^n_R|^2}{2T^n_R}\Big)\\
&\leq&\exp\Big(-\frac{|v_i|^2-2|(v_i-v_{\theta})+U^n_R|^2}{2T^n_R}\Big)\\
&\leq&\exp\Big(-\frac{|v_i|^2}{2T^n_R}\Big)\exp\Big(\frac{|\triangle v|^2+|U^n_R|^2}{2T^n_R}\Big)\\
&\leq&C_{T}\exp\Big(-C_T|v_i|^2\Big).
\end{eqnarray*}
Hence we have
\begin{eqnarray*}
&&\|{\mathcal M}(\widetilde{f}^n_R)-\mathcal{M}^n(\widetilde{f}^n_R)\|_{L^1_2}\leq C_qN_q(f_0)\triangle v.\\
\end{eqnarray*}

We now turn to (c), We consider
\begin{eqnarray*}
&&\|\mathcal{M}^n(\widetilde{f}^n_R)-E\big(\mathcal{M}^n(\widetilde{f}^n_R)\big)\|_{L^1_2}\\
&&\hspace{1cm}=\sum_{m,\ell}\Big\|\frac{x-x_m}{\triangle x}\Big(\mathcal{M}^n(\widetilde{f}^n_R)(x,v)-\mathcal{M}^n(\widetilde{f}^n_R)(x_{m+1},v_{\ell})\Big){\mathcal A}_{m,\ell}^R\Big\|_{L^1_2}\\
&&\hspace{1cm}+\sum_{m,\ell}\Big\|\frac{x_{m+1}-x}{\triangle x}\Big(\mathcal{M}^n(\widetilde{f}^n_R)(x,v)-\mathcal{M}^n(\widetilde{f}^n_R)(x_m, v_{\ell})\Big){\mathcal A}_{m,\ell}^R\Big\|_{L^1_2}\\
&&\hspace{1cm}=A+B.
\end{eqnarray*}
Since $\frac{x-x_i}{\triangle x}\leq 1$, we have
\begin{eqnarray*}
A&\leq&\sum_{m,\ell}\Big\|\mathcal{M}^n(\widetilde{f}^n_R)(x,v)-\mathcal{M}^n(\widetilde{f}^n_R)(x_{m+1}, v_{\ell}){\mathcal A}_{m,\ell}^R\Big\|_{L^1_2}\\
&=&\sum_{m,\ell}\int_{A_{m,\ell}^R}\Big|\mathcal{M}^n(\widetilde{f}^n_R)(x,v)-\mathcal{M}^n(\widetilde{f}^n_R)(x_{m+1}, v_{\ell})\Big|(1+|v|)^2dvdx\\
&\leq&C_qN_q(f_0)(\triangle x+\triangle v)\int\sum_{m,\ell}\frac{\mathcal{A}^R_{i,j}(x,v)}{(1+|v|)^{q-2}}dxdv\\
&\leq& C_qN_q(f_0)(\triangle x+\triangle v).
\end{eqnarray*}
Here we used for $(x,v)\in [x_m,x_{m+1})\times[v_{\ell-\frac{1}{2}},v_{\ell+\frac{1}{2}})$
\begin{eqnarray*}
&&\Big|\mathcal{M}^n(\widetilde{f}^n_R)(x,v)-\mathcal{M}^n(\widetilde{f}^n_R)(x_{m+1}, v_{\ell})\Big|\cr
&&\hspace{0.5cm}\leq
\Big|~(x-x_m)(\partial _x\rho+\partial_xU+\partial_xT)\cdot(\frac{\partial\mathcal{M}}{\partial\rho}+\frac{\partial\mathcal{M}}{\partial U}
+\frac{\partial\mathcal{M}}{\partial T})
+(v-v_{\ell})\frac{\partial\mathcal{M}}{\partial v}~\Big|\cr
&&\hspace{0.5cm}\leq\frac{N_q(f^n_R)}{(1+|v|)^{q-2}}(\triangle x+\triangle v),
\end{eqnarray*}
which can be obtained by an almost identical argument as the one given in the proof of Lemma \ref{ControlofMaxwellian}.
$B$ can be handled in a similar manner.
\end{proof}
We are now in a position to prove our main theorem.
\subsection{{\bf Proof of the main theorem}}
We subtract (\ref{refom_scheme}) from (\ref{Consist2}) and take $L^1_2$ norms to obtain
\begin{align}
\begin{aligned}\label{Reform-Consist}
\|f(\cdot,\cdot,(n+1)\triangle t)-f^{n+1}_R(\cdot,\cdot)\|_{L^1_2}&=\frac{\kappa}{\kappa+\triangle t}
\|\widetilde{f}(\cdot,\cdot,n\triangle t)-\widetilde{f}^{n}_R(\cdot,\cdot)\|_{L^1_2}\cr
&+\frac{\triangle t}{\kappa+\triangle t}
\|{\mathcal M}(\widetilde{f})-E\big({\mathcal M}(\widetilde{f}^n_R)\big)\|_{L^1_2}\cr
&+\frac{\triangle t}{\kappa+\triangle t}\|\mathcal{R}_1\|_{L^1_2}+\frac{1}{\kappa+\triangle t}\|\mathcal{R}_2\|_{L^1_2},
\end{aligned}
\end{align}
where
\begin{eqnarray*}
\mathcal{R}_1&=&{\mathcal M}(\widetilde{f})(x,v,s)-\widetilde{\mathcal M}(f)(x,v,s),\nonumber\\
\mathcal{R}_2&=&~R_{\mathcal{M}}-R_{f}.
\end{eqnarray*}
We then collect the estimates of Lemma \ref{Additional_Lemma1} and Lemma \ref{Additional_Lemma2}:
\begin{eqnarray}
\|\widetilde{f}-\widetilde{f}^n_R\|_{L^1_2}
&\leq&\|f-f^n_R\|_{L^1_2}+C_qN_q(f_0)\triangle v\triangle t+C_qN_q(f_0)\triangle x,\nonumber\cr
\vspace{0.4cm}
\|{\mathcal M}(\widetilde{f})-E\big({\mathcal M}(\widetilde{f^n_R})\big)\|_{L^1_2}
&\leq&\|{\mathcal M}(\widetilde{f})-\mathcal{M}(\widetilde{f}^n_R)\|_{L^1_2}
+\|{\mathcal M}(\widetilde{f}^n_R)-\mathcal{M}^n(\widetilde{f}^n_R)\|_{L^1_2}\nonumber\cr
&+&\|{\mathcal M^n(\widetilde{f}^n_R)}-E(\big({\mathcal M}^n(\widetilde{f}^n_R)\big))\|_{L^1_2}\nonumber\cr
&\leq&C\|f-f^n_R\|_{L^1_2}+C_qN_q(f_0)\triangle v\triangle t+C_qN_q(f_0)(\triangle v+\triangle x),\nonumber\cr
\vspace{0.4cm}
\|\mathcal{R}_1\|_{L^1_2}&\leq&C\|f-\widetilde{f}\|_{L^1_2}+C_qN_q(f_0)\triangle t,\nonumber\cr
|\mathcal{R}_2\|_{L^1_2}&\leq&CN_q(f_0)(\triangle t)^2.\nonumber
\end{eqnarray}
We substitute these estimates into (\ref{Reform-Consist}) to obtain
\begin{eqnarray*}
&&\|~f(\cdot,\cdot,(n+1)\triangle t) - f^{n+1}_R~\|_{L^1_2}\cr
&&\hspace{2cm}\leq \frac{\kappa }{\kappa+\triangle t}\Big(~\|f-f^n_R\|_{L^1_2}+C_{q,T_f}N_q(f_0)\triangle v\triangle t+C_{q,T}N_q(f_0)\triangle x\Big)\cr
&&\hspace{2cm}+\frac{C_{q,T_f}\triangle t}{\kappa+\triangle t}\Big(~\|f-f^n_R\|_{L^1_2}+N_q(f_0)\triangle v\triangle t+N_q(f_0)(\triangle v+\triangle x)\Big)\cr
&&\hspace{2cm}+\frac{C_{q,T_f}\triangle t}{\kappa+\triangle t}\Big(~\|f-f^n_R\|_{L^1_2}+N_q(f_0)\triangle t\Big)
+C_qN_q(f_0)\frac{1}{\kappa+\triangle t}(\triangle t)^2\cr
&&\hspace{2cm}=\Big(1+\frac{C_{T_f}\triangle t}{\kappa+\triangle t}\Big)\|~f - f^{n}_R~\|_{L^{\!\!1}_2}\cr
&&\hspace{2cm}+\Big(1+\frac{C_{T_f}\triangle t}{\kappa+\triangle t}\Big)\triangle v\triangle t\cr
&&\hspace{2cm}+\Big(1+\frac{C_{T_f}\triangle t}{\kappa+\triangle t}\Big)(\triangle x+\triangle v)\cr
&&\hspace{2cm}+\frac{C_{q,T_f}}{\kappa+\triangle t}(\triangle t)^2.
\end{eqnarray*}
We now put $\Gamma=\frac{C_T\triangle t}{\kappa+\triangle t}$ for simplicity of notation and
iterate the above inequality to obtain
\begin{align}
\begin{aligned}\label{a.f.estimate}\
\|f(\cdot,\cdot,N_t\triangle t)-f^{N_t}_R\|_{L^1_2}
&\leq(1+\Gamma)^{N_t}\|~f(\cdot,\cdot,0) - f^{0}_R~\|_{L^1_2}\cr
&+\sum_{i=1}^{N_t}(1+\Gamma)^i\triangle x\triangle t\cr
&+\sum_{i=1}^{N_t}(1+\Gamma)^i(\triangle x+\triangle v)\cr
&+\frac{C_{q,T_f}}{\kappa+\triangle t}\sum_{i=1}^{N_t-1}(1+\Gamma)^i(\triangle t)^2.\cr
\end{aligned}
\end{align}
Note that from the elementary inequality $(1+x)^n\leq e^{nx}$, we have
\begin{eqnarray*}
(1+\Gamma)^{N_t}&\leq&e^{N_t\Gamma }\\
&\leq& e^{\frac{C_{T_f}N_t\triangle t}{\kappa+\triangle t}}\\
&=& e^{\frac{C_{T_f}T_f}{\kappa+\triangle t}},
\end{eqnarray*}
and
\begin{eqnarray*}
\sum_{i=1}^{N_t}(1+\Gamma)^i&=&\frac{(1+\Gamma)^{N_t}-1-\Gamma}{(1+\Gamma)-1}\\
&\leq&C_{T_f}e^{\frac{C_{T_f}T_f}{\kappa+\triangle t}}\frac{\kappa+\triangle t}{\triangle t}.
\end{eqnarray*}
Similarly, we have
\begin{eqnarray*}
\sum_{i=0}^{N_t-1}(1+\Gamma)^i\leq C_{T_f}e^{\frac{C_{T_f}T_f}{\kappa+\triangle t}}\frac{\kappa+\triangle t}{\triangle t}.
\end{eqnarray*}
We then substitute these estimates into (\ref{a.f.estimate}) to obtain
\begin{align}
\begin{aligned}\label{a.f.estimate2}
\|f(\cdot,\cdot,N_t\triangle t)-f^{N_t}_R\|_{L^1_2}
&\leq e^{\frac{C_{T_f}{T_f}}{\kappa+\triangle t}}~\|~f(\cdot,\cdot,0) - f^{0}_R~\|_{L^1_2}\cr
&+ C_{T_f}e^{\frac{C_{T_f}{T_f}}{\kappa+\triangle t}}~(\kappa+\triangle t)~\triangle x\cr
&+ C_{T_f}e^{\frac{C_{T_f}{T_f}}{\kappa+\triangle t}}~(\kappa+\triangle t)\frac{(\triangle x+\triangle v)}{\triangle t}\cr
&+ C_{T_f}e^{\frac{C_{T_f}{T_f}}{\kappa+\triangle t}}~\triangle t.
\end{aligned}
\end{align}
To estimate $\|f_0-f^{0}_R\|_{L^1_2}$, we decompose the integral as
\begin{eqnarray*}
\|f_0-f^{0}_R\|_{L^1_2}&\leq&\|f_0-f^{0}_R\|_{L^1_2(|v|\leq R)}+\|f_0\|_{L^1_2(|v|\geq R)}.
\end{eqnarray*}
By direct estimates, we have
\begin{eqnarray*}
\|f_0-f^{0}_R\|_{L^1_2(|v|\leq R)}&=&\|f_0-E(f_0{\mathcal X}_{|v|\leq R})\|_{L^1_2}\cr
&\leq&C_q \overline{N}_q(f_0)(\triangle x+\triangle v).
\end{eqnarray*}
On the other hand, the second term can be estimated as follows
\begin{eqnarray*}
\|f_0\|_{L^1_2(|v|\geq R)}&\leq& N_q(f_0)\int_{|v|\geq R} \frac{1}{(1+|v|)^q}dv\\
&\leq& \frac{C_qN_q(f_0)}{(1+R)^{q+1}}.\\
\end{eqnarray*}
We substitute them into (\ref{a.f.estimate2}) and take
\[
C({T_f},q,f_0)=~\max\{C_{T_f}e^{\frac{C_{T_f}{T_f}}{\kappa+\triangle t}}, C_q \overline{N}_q(f_0)\},
\]
to obtain
\begin{eqnarray*}
\|f(\cdot,\cdot,T_f) - f^{N_t}_R\|_{L^1_2}\leq C(T_f,q,f_0)
\Big(\triangle x+\triangle v+\frac{\triangle v+\triangle x}{\triangle t}+\triangle t+\frac{1}{(1+R)^{q+1}}\Big).\\
\end{eqnarray*}
This completes the proof.\newline
\noindent{\bf Acknowledgment.} Part of this work was done while S.-B. Yun was visiting professor Giovanni Russo
at University of Catania. He would like to thank for their warm hospitality. S.-B. Yun's work was supported by the BK 21 program.


\begin{thebibliography}{99}
\bibitem{A-A-P} Andries, P., Aoki, Kazuo, and Perthame, B.: A Consistenct BGK-Type Model for Gas Mixtures
J. Statist. Phs. {\bf 106} (2002), no. 5-6, 993-1018.\\

\bibitem{A-B-L-P} Andries, P., Bourgat, J. F., Le-Tallec, P., and Perthame, B.:  Numerical comparison between
 the Boltzmann and ES-BGK models for rarefied gases. Comput. Methods Appl. Math. Eng. {\bf 31} (2002) 3369.\\

\bibitem{A-S-Y} Aoki, K., Sone, T. and Yamada, T.: Numerical analysis of gas flows condensing on its plane condensed phae on the
basis of kinetic theory, Physics of Fluid A, {\bf 2} (1990), 1867-1878.\\

\bibitem{B-G-K} Bhatnagar, P. L., Gross, E. P. and Krook, M.: A model for collision processes in gases. Small amplitude process
in charged and nuetral one-component systems, Physical Reviews, {\bf 94} (1954), 511-525.\\

\bibitem{Bel} Bellouquid, A. : Global Existence and Large-Time Behavior for BGK Model for a Gas with Non-constant Cross Section,
Transport Theory and Statistical Physics, {\bf 32} (2003), no. 2, 157-184.\\

\bibitem{C} Cercignani, C.: The Boltzmann equation and its applications, Springer, 1987.\\

\bibitem{C-Perthame} Coroni, F. and Perthame, B.: Numerical passage from kinetic to fluid equations,
Siam. J. Nmer. Anal. {\bf 28}, No.1, (1991) 26-42.\\

\bibitem{F-R} Filbet, F. and Russo, G.: Semilagrangian schemes applied to moving boundary problems for the BGK model of rarefied gas dynamics, Kinet. relat. models, {\bf 2} (2009), 231-250.\\

\bibitem{Iss} Issautier, D.: Convergence of a weighted particle method for solving the Boltzmann (B.G.K) Equation,
Siam Journal on Numerical Analysis, {\bf 33}, No.6 (1996), 2099-2199.\\

\bibitem{Mieussens1} Mieussens, L.: Discrete velocity model and implicit scheme for the BGK equation of rarefied gas dynamics,
Math. Models. Math. Appl. Sci {\bf 10} (2000), 1121. \\

\bibitem{Mieussens2} Mieussens, L.: Convergence of a discrete-velocity model for the Boltzmann-BGK equation,
Comput. Math. Appl. {\bf 41} (2001), 83.\\

\bibitem{Mischler} Mischler, S: Uniqueness for the BGK-equation in $\bbr^n$ and rate of convergence for a semi-discrete scheme.
Differential Integral equaitons, {\bf 9}, No.5 (1996), 1119-1138.\\

\bibitem{P-R} Pareschi, L.and Russo G.: An introduction to the numerical analysis of the Boltzmann equation,
Riv. Mat. Univ. Parma (7) {\bf 4**} (2005), 145-250.\\

\bibitem{P} Perthame, B.: Global existence to the BGK model of Boltzmann equation, J. Differential
Equations, {\bf 82} (1989), 191-205.\\

\bibitem{P-P} Perthame, B. and Pulvirenti, M.: Weighted $L^{\infty}$ bounds and uniqueness for the Boltzmann B.G.K model,
Arch. Rational Mech. anal., {\bf 125} (1993), 289-295.\\

\bibitem{Pi-Pu} Pieraccini, S. and Puppo, G.: Implicit-Explicit Schemes for BGK Kinetic Equations,
J. Sci. Comput. {\bf 32} ,  no. 1  (2007), 1 - 28.\\

\bibitem{R-P} Russo, G. and Santagati, P.: A new class of large time step methods for the BGK models of the Boltzmann equation, preprint.\\

\bibitem{Sant} Santagati, P.: High order semi-Lagrangian methods for the BGK model of the Boltzmann equation, PhD thesis, University of Catania 2007.\\

\bibitem{Stru} Struchtrup, H.: The BGK-model with velocity-dependent collision frequency
Cont. Mech. Thermodyn. {\bf 9} (1997) no. 1, 23-32.\\

\bibitem{W} Welander, P.: On the temperature jump in a rarefied gas, Ark. Fys. {\bf 7} (1954), 507-553.\\

\bibitem{Yun1} Yun, S.-B.: Cauchy problem for the Boltmzann-BGK model near a global Maxwellian. Submitted.\\

\bibitem{Yun2} Yun, S.-B.: Vlasov-Maxwell-BGK model near a global Maxwellian. Preprint\\

\bibitem{Z-Stru} Zheng, Y. and Struchtrup, H.: Ellipsoidal statistical Bhatnagar-Gross-Krook model with velocity-
dependent collision frequency,  Phys. of Fluids. {\bf 17} (2005) no. 12.
\end{thebibliography}
\end{document}